\theoremstyle{plain}
\newtheorem{thm}{Theorem}[section]
\newtheorem{lem}[thm]{Lemma}
\theoremstyle{definition}
\newtheorem{df}[thm]{Definition}
\newtheorem{rem}[thm]{Remark}
\newtheorem{ex}[thm]{Example}
\newcommand{\FF}{\mathbb{F}}
\newcommand{\CC}{\mathbb{C}}
\newcommand{\allone}{\mathbf{1}}
\newcommand{\0}{\mathbf{0}}
\newcommand{\II}{\mathrm{II}}
\newcommand{\III}{\mathrm{III}}
\newcommand{\IV}{\mathrm{IV}}
\def\bm#1{\mathbf{#1}}
\DeclareMathOperator{\wt}{wt}
\begin{document}

\title[Neighbors, neighbor graphs and invariant rings]{{
Neighbors, neighbor graphs  and invariant rings in coding theory 
}
}

\author[Chakraborty]{Himadri Shekhar Chakraborty*}
\thanks{*Corresponding author}
\address
	{
		Department of Mathematics, 
		Shahjalal University of Science and Technology, Sylhet-3114, Bangladesh\\
	}
\email{himadri-mat@sust.edu}

\author[Chiari]{Williams Chiari}
\address
	{
		Graduate School of Natural Science and Technology,
		Kanazawa University, Ishikawa 920-1192, Japan\\
	}
\email{williamschiari@gmail.com}


\author[Miezaki]{Tsuyoshi Miezaki}
\address
	{
		Faculty of Science and Engineering, 
		Waseda University, 
		Tokyo 169-8555, Japan\\
	}
\email{miezaki@waseda.jp} 

\author[Oura]{Manabu Oura}
\address
	{
			Faculty of Mathematics and Physics, 
			Kanazawa University,  
			Ishikawa 920-1192, Japan
	}
\email{oura@se.kanazawa-u.ac.jp} 

\date{}
\maketitle

\begin{abstract}
In the present paper, 
we discuss the class of Type~$\III$ and Type~$\IV$ codes 
from the perspectives of neighbors. 
Our investigation analogously extends 
the results originally presented by 
Dougherty~\cite{Dougherty2022}
concerning the neighbor graph of binary self-dual codes.
Moreover, as an application of neighbors in invariant theory,
we show that the ring of 
the weight enumerators of Type~$\II$ 
code $d_{n}^{+}$ and its neighbors
in arbitrary genus is finitely generated.
Finally, we obtain a minimal set of generators of this ring
up to the space of degree~$24$ and genus~$3$.
\end{abstract}

{\small
\noindent
{\bfseries Key Words:}
Self-dual codes, graphs, neighbor codes, weight enumerators.\\ \vspace{-0.15in}

\noindent
2010 {\it Mathematics Subject Classification}. 
Primary 05B35;
Secondary 94B05, 11T71.\\ \quad
}


\section{Introduction}

One of the most celebrated classifications of codes 
in algebraic coding theory
is self-dual codes.
The study of this type of codes is immensely significant
not only because of its various practical importance,
as many of the best-known codes are self-dual,
but also its diverse 
theoretical connections with geometric lattices, block designs
and invariant theory. 
For instance, see~\cite{AsMa69, CS1999, NRS}.
Brualdi and Pless~\cite{BP1991}
introduced the concept of neighbors, 
a remarkable notion in the theory of
binary self-dual codes. 
Two binary self-dual codes of length~$n$
is known as neighbors if they share a subcode of 
codimension~$1$. 
In a recent study, 
Dougherty~\cite{Dougherty2022}
defined neighbor graph of binary self-dual codes,
where two codes are connected by an edge if and only if
they are neighbors.

The main purpose of this paper is to extend the 
results in~\cite{Dougherty2022}
to the case of non-binary self-dual codes, 
namely for Type~$\III$ and Type~$\IV$ codes.
We define the notion
of neighbors for the self-dual codes over
any finite field as follows:

\begin{df}
	Two self-dual codes of length~$n$ over~$\FF_{q}$
	are called \emph{neighbors} if they share a subcode
	of dimension $\frac{n}{2}-1$,
	that is,
	their intersection is a subcode of codimension~$1$.
\end{df}

In this note, 
we discuss some properties of neighbors in different classes
of self-dual codes, specifically, Type~$\III$ codes, 
Type~$\III$ codes containing all-ones vector and Type~$\IV$ codes.
We refer the readers to~\cite{CPS1979, MOSW1978, PP1973} 
for a detail discuss on these types of codes.
We also define the neighbor graphs of above mentioned classes
of self-dual codes.
We show some significant applications of
the neighbor graphs 
in the study of Type~$\III$ and Type~$\IV$ codes
and their neighbors.
In particular, 
we use these graphs to count the number of 
Type~$\III$ (resp. Type~$\IV$) codes
applying the concept of~$k$-neighbors.
Moreover, we use the idea of $k$-neighbors
of self-dual codes over finite fields 
to define the notion of $k$-neighbor graphs.
Using this notion we derive several analogous
results of counting formulae.

\begin{df}
	For $0 \leq k \leq \frac{n}{2}$, 
	two self-dual codes of length~$n$ over~$\FF_{q}$
	are called $k$-neighbors if and only if they share a subcode
	of dimension~$\frac{n}{2}-k$.
\end{df}

Finally, we apply neighbors in invariant theory
and prove that the ring of 
the weight enumerators of Type~$\II$ 
code $d_{n}^{+}$ and its neighbors
in arbitrary genus can be finitely generated over~$\CC$.
We also show that the space of degree~$24$ of this ring is 
strictly smaller than the ring of the weight enumerators of
all Type~$\II$ codes.

This paper is organized as follows. 
In Section~\ref{Sec:Preli}, 
we discuss definitions and the basic properties of linear codes, 
neighbors and graphs that are needed to understand this paper. 
In Section~\ref{Sec:TypeIIICodes}, 
we define the neighbor graph of Type~$\III$ codes,
and answer various counting questions in this graph. 
Using $k$-neighbors we also derive a new formula 
to count the number of Type~$\III$ codes.
We illustrate these results in Sections~\ref{Sec:TypeIIICodesAllOne} 
and~\ref{Sec:TypeIVCodes} 
for other important classes of self-dual codes,
for example, Type~$\III$ codes containing all-ones vector
and Type~$\IV$ codes. 
In Section~\ref{Sec:kNeighborGraphs},
we discuss $k$-neighbor graphs and its properties.
Finally, in Section~\ref{Sec:Application},
we discuss the invariant ring of the weight
enumerators of Type~$\II$ code~$d_{n}^{+}$
and its neighbors.

All computer calculations in this paper were done with the help of Magma~\cite{Magma}
and SageMath~\cite{SageMath}.

\section{Preliminaries}\label{Sec:Preli}

In this section,
we give a brief discussion on linear codes and graphs 
including the basic definitions and properties.
We follow~\cite{HP2003, MS1977} 
for the discussions.

\subsection{Linear codes}

Let $\FF_{q}$ be a finite field of order~$q$,
where $q$ is a prime power. 
In this paper, $q$ will be either~$2$, $3$ or~$4$.
Then $\FF_{q}^{n}$ denotes the vector space of dimension~$n$ 
with inner product:
\[
	u\cdot v 
	:= 
	\begin{cases}
		u_{1}v_{1} + \cdots + u_{n}v_{n},
		& 
		\mbox{if $q =2, 3$}\\
		u_{1}v_{1}^{2} + \cdots + u_{n}v_{n}^{2},
		&
		\mbox{if $q = 4$}
	\end{cases}
\]
for $u,v \in \FF_{q}^n$,
where
$u = (u_{1},\ldots,u_{n})$ and $v = (v_{1},\ldots,v_{n})$.
Here $\FF_{4} := \{0,1,\omega,\omega^2\}$
with $1+\omega+\omega^2 = 0$. 
We call $u$ and $v$ are \emph{orthogonal}
If $u \cdot v = 0$. 
An element
$u \in \FF_{q}^n$ is called \emph{self-orthogonal} if 
$u \cdot u = 0$.
We denote the all-ones vector by~$\allone$
and zero vector by~$\0$.
The \emph{weight} $\wt(u)$ of a vector $u \in \FF_{q}^{n}$
is the number of non-zero coordinates in it.
An \emph{$\FF_q$-linear code}~$C$ of length~$n$ 
is a vector subspace of $\FF_{q}^{n}$.  
The elements of~$C$ are called \emph{codewords}.
The \emph{dual code} of~$C$ is defined as
\[
	C^\perp 
	:= 
	\{
		v\in \FF_{q}^{n} 
		\mid 
		u \cdot v = 0 
		\text{ for all } 
		u\in C
\}. 
\]
If $C \subseteq C^\perp$, then $C$ is called \emph{self-orthogonal}.
Clearly, every codeword of a self-orthogonal code is self-orthogonal. 
In addition, when $C = C^\perp$, we call $C$ \emph{self-dual}. 
It is well known that 
the length~$n$ of a self-dual code over~$\FF_q$ is even 
and the dimension is $n/2$. 

\begin{lem}\label{Lem:SlfOrthoTypeIII}
	Let $n \equiv 0\pmod 4$. 
	Then the weight of any self-orthogonal vector 
	in~$\FF_{3}^{n}$ is divisible by~$3$.
\end{lem}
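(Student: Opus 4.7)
The plan is to exploit the very restricted behaviour of squaring in $\FF_{3}$ and reduce the self-orthogonality condition to a direct count of the nonzero coordinates.

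First I would write out $u\cdot u$ explicitly for $u=(u_{1},\ldots,u_{n})\in\FF_{3}^{n}$. Since the inner product in the $q=3$ case is $u\cdot u=u_{1}^{2}+\cdots+u_{n}^{2}$, the key observation is that every element $a\in\FF_{3}$ satisfies $a^{2}\in\{0,1\}$; more precisely $0^{2}=0$, while $1^{2}=1$ and $2^{2}=4=1$ in $\FF_{3}$. Hence $u_{i}^{2}=1$ whenever $u_{i}\neq 0$ and $u_{i}^{2}=0$ otherwise, so the sum $\sum_{i=1}^{n}u_{i}^{2}$ is nothing other than the number of nonzero coordinates of $u$, reduced modulo $3$. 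In other words, $u\cdot u\equiv \wt(u)\pmod 3$ when we view the left-hand side as an integer representative of its class in $\FF_{3}$.

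From this identification the conclusion is immediate: assuming $u$ is self-orthogonal means $u\cdot u=0$ in $\FF_{3}$, and combined with the congruence just derived this forces $\wt(u)\equiv 0\pmod 3$. Note that the hypothesis $n\equiv 0\pmod 4$ plays no role in this argument; it is the standing divisibility assumption on the ambient length under which Type~$\III$ self-dual codes exist, and it is inherited here because the lemma is stated as a tool for the subsequent discussion of Type~$\III$ codes, not because the divisibility of $\wt(u)$ by $3$ requires it.

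There is essentially no obstacle: the only subtlety worth mentioning is the switch between viewing $\sum u_{i}^{2}$ as an element of $\FF_{3}$ versus counting the nonzero $u_{i}$'s as a nonnegative integer, and this is resolved by the explicit case analysis on $u_{i}\in\{0,1,2\}$. The proof therefore reduces to one or two lines once the squaring table in $\FF_{3}$ is recorded.
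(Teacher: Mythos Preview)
Your argument is correct and is essentially identical to the paper's own proof: both observe that $x^{2}=1$ for every nonzero $x\in\FF_{3}$, so $u\cdot u$ equals $\wt(u)$ modulo $3$, and self-orthogonality forces $3\mid\wt(u)$. Your remark that the hypothesis $n\equiv 0\pmod 4$ is not actually used in the argument is also accurate; the paper states it but, like you, does not invoke it in the proof.
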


\begin{proof}
	Since $n \equiv 0\pmod 4$ 
	and $x^2 = 1$ for any non-zero $x \in \FF_{3}$,
	therefore any vector in~$\FF_{3}^{n}$
	is self-orthogonal if and only if its weight is divisible by~$3$.
\end{proof}

\begin{lem}\label{Lem:SlfOrthoTypeIV}
	Let $n$ is even. Then the weight of each self-orthogonal vector 
	in~$\FF_{4}^{n}$ is even.
\end{lem}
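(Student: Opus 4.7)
The plan is to proceed by direct computation, exploiting two key facts about the field $\FF_{4}$: the multiplicative group $\FF_{4}^{\times} = \{1,\omega,\omega^{2}\}$ has order $3$, so $x^{3}=1$ for every nonzero $x\in\FF_{4}$; and $\FF_{4}$ has characteristic $2$.

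First I would unfold the definition of the inner product used in the paper: for $u=(u_{1},\dots,u_{n})\in\FF_{4}^{n}$, one has
\[
u\cdot u = \sum_{i=1}^{n} u_{i}u_{i}^{2} = \sum_{i=1}^{n} u_{i}^{3}.
\]
Next I would use the observation that $u_{i}^{3}=1$ when $u_{i}\neq 0$ and $u_{i}^{3}=0$ when $u_{i}=0$, so that the sum telescopes into a count of nonzero coordinates:
\[
u\cdot u = \wt(u)\cdot 1 \quad \text{in } \FF_{4}.
\]
Finally, since $\FF_{4}$ has characteristic $2$, the element $\wt(u)\cdot 1$ equals $0$ if and only if $\wt(u)$ is even. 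Hence $u\cdot u=0$ forces $\wt(u)$ to be even, which is precisely the claim.

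There is no real obstacle here; the statement is essentially a one-line consequence of $x^{3}=1$ for $x\in\FF_{4}^{\times}$ combined with characteristic $2$. I would also remark (for the reader's peace of mind) that the hypothesis ``$n$ is even'' plays no role in the argument itself---it is carried along from the ambient setting, where self-orthogonal vectors arise inside self-dual codes, which necessarily have even length.
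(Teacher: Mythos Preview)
Your proof is correct and follows essentially the same approach as the paper: both arguments rest on the identity $x^{3}=1$ for nonzero $x\in\FF_{4}$, so that $u\cdot u$ reduces to $\wt(u)\cdot 1$ in characteristic~$2$. Your version is simply a more explicit unpacking of the paper's one-line proof, and your observation that the hypothesis ``$n$ even'' is not actually used in the argument is accurate.
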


\begin{proof}
	Since $n$ is even and $x^{3} = 1$ for any non-zero
	$x \in \FF_{4}$,
	therefore any vector in~$\FF_{4}^{n}$
	is self-orthogonal if and only if its weight is even.
\end{proof}

For any self-dual code~$C$ of length~$n$ over~$\FF_{q}$,
it is immediate that 
$$C_{0} := \{{w}\in C \mid {w} \cdot {v} = 0 \}$$
is a subcode of~$C$ with co-dimension~$1$,
where ${v} \in \FF_{q}^{n}$ is a self-orthogonal 
vector not in~$C$. Then it is not hard to show that 
$
	N_{C}({v})
	:=
	\langle
	C_{0},{v}
	\rangle
$
is a neighbor of~$C$, see~\cite{Dougherty2022}.

In general, a self-dual code~$C$ of length~$n$ over~$\FF_{q}$
has several neighbors.
We do not always have that 
$N_{C}(v_{1}) \neq N_{C}(v_{2})$ 
for $v_{1} \neq v_{2}$.
Then by the similar arguments in~\cite[Lemma 3.2]{Dougherty2022},
we have the following useful lemma.

\begin{lem}\label{Lem:EqualNeighbors}
	Let $C$ be a self-dual code of length~$n$ over~$\FF_{q}$.
	Let ${v_{1}}$ and $v_{2}$ be self-orthogonal vectors
	in~$\FF_{q}^{n}$ but not in~$C$.
	Then $N_{C}({v_{1}}) = N_{C}(v_{2})$
	if and only if there exists a vector~$w \in C$
	such that $w \cdot v_{1} = 0$ 
	and $v_{2} = w + \alpha v_{1}$
	for any nonzero $\alpha \in \FF_{q}$.
\end{lem}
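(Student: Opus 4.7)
The plan is to prove the two implications separately using a convenient normal form for elements of $N_{C}(v_{i})$. Write $C_{0}^{(i)} := \{u \in C \mid u \cdot v_{i} = 0\}$, so that $N_{C}(v_{i}) = \langle C_{0}^{(i)}, v_{i}\rangle$. Because $v_{i} \notin C = C^{\perp}$, some element of $C$ fails to be orthogonal to $v_{i}$, so $C_{0}^{(i)}$ has codimension exactly one in $C$; consequently every element of $N_{C}(v_{i})$ can be written as $u + \beta v_{i}$ with $u \in C_{0}^{(i)}$ and $\beta \in \FF_{q}$.

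For the ``only if'' direction, suppose $N_{C}(v_{1}) = N_{C}(v_{2})$. Then $v_{2} \in N_{C}(v_{1})$, so $v_{2} = w + \alpha v_{1}$ for some $w \in C_{0}^{(1)}$ and $\alpha \in \FF_{q}$. If $\alpha = 0$, then $v_{2} = w \in C$, violating $v_{2} \notin C$; hence $\alpha \neq 0$ and $w$ is the required vector.

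For the ``if'' direction, suppose $v_{2} = w + \alpha v_{1}$ with $w \in C$, $w \cdot v_{1} = 0$, and $\alpha \neq 0$. The crux is the identity $C_{0}^{(1)} = C_{0}^{(2)}$. Take $u \in C_{0}^{(1)}$. Then $u \cdot v_{2} = u \cdot w + \gamma\,(u \cdot v_{1})$, where $\gamma = \alpha$ for $q \in \{2,3\}$ and $\gamma = \alpha^{2}$ for $q = 4$ (accounting for the Hermitian inner product). Both summands vanish: $u \cdot w = 0$ by self-orthogonality of $C$, and $u \cdot v_{1} = 0$ by assumption. So $C_{0}^{(1)} \subseteq C_{0}^{(2)}$, and equality follows from the common dimension $n/2 - 1$. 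Therefore $N_{C}(v_{2}) = \langle C_{0}^{(2)}, v_{2}\rangle = \langle C_{0}^{(1)}, w + \alpha v_{1}\rangle = \langle C_{0}^{(1)}, v_{1}\rangle = N_{C}(v_{1})$, where the last two equalities use $w \in C_{0}^{(1)}$ and $\alpha \neq 0$.

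The main point requiring care is the $\FF_{4}$ case: the inner product is semilinear rather than bilinear in the second argument, so one must not silently treat it as bilinear. Since both relevant inner products vanish in the key step, the Frobenius twist is inert and the argument works uniformly for $q \in \{2,3,4\}$. Apart from this, the proof reduces to a dimension count together with the self-orthogonality of $C$.
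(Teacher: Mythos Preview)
Your proof is correct and complete. The paper itself does not give a proof of this lemma; it simply refers the reader to the analogous argument in \cite[Lemma~3.2]{Dougherty2022} for the binary case, and your argument is precisely the natural adaptation of that proof to $\FF_{q}$ with $q \in \{2,3,4\}$, including the appropriate care with the Hermitian form when $q=4$.
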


\begin{lem}\label{Lem:NumEqualNeighbor}
	Let~$C$ be a self-dual code of length~$n$ over~$\FF_{q}$.
	Let~$v_{0} \in \FF_{q}^{n}$ be a self-orthogonal vector
	not in~$C$.
	Then the number of self-orthogonal vectors~$v \in \FF_{q}^{n}$
	such that $N_{C}(v) = N_{C}(v_{0})$
	is $(q-1)q^{\frac{n}{2}-1}$.
\end{lem}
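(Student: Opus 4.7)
The plan is to establish a bijection between the set
$S := \{v \in \FF_{q}^{n} \mid v \text{ is self-orthogonal and } N_{C}(v) = N_{C}(v_{0})\}$
and $C_{0} \times (\FF_{q} \setminus \{0\})$, where $C_{0} := \{w \in C \mid w \cdot v_{0} = 0\}$. Since $C = C^{\perp}$ and $v_{0} \notin C$, the $\FF_{q}$-linear functional $w \mapsto w \cdot v_{0}$ on~$C$ is nonzero, so its kernel $C_{0}$ has codimension~$1$ in~$C$. Hence $|C_{0}| = q^{n/2-1}$ and $|C_{0} \times (\FF_{q} \setminus \{0\})| = (q-1) q^{n/2 - 1}$, which will be the desired count.

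Next I would show that the map $\phi(w, \alpha) := w + \alpha v_{0}$ is a bijection from $C_{0} \times (\FF_{q} \setminus \{0\})$ onto~$S$. Surjectivity is exactly Lemma~\ref{Lem:EqualNeighbors}, which characterises the elements of~$S$ as $w + \alpha v_{0}$ with $w \in C$, $w \cdot v_{0} = 0$, and $\alpha \in \FF_{q} \setminus \{0\}$. For injectivity, any equality $w_{1} + \alpha_{1} v_{0} = w_{2} + \alpha_{2} v_{0}$ rearranges to $(\alpha_{2} - \alpha_{1}) v_{0} = w_{1} - w_{2} \in C$, which forces $\alpha_{1} = \alpha_{2}$ (because $v_{0} \notin C$) and then $w_{1} = w_{2}$.

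The only non-routine step, and where I expect the main (minor) technical hurdle, is verifying that each $\phi(w, \alpha)$ is itself self-orthogonal, because the inner product behaves differently in the bilinear cases $q \in \{2, 3\}$ and the Hermitian case $q = 4$. For $q \in \{2, 3\}$, symmetric bilinearity gives
$(w + \alpha v_{0}) \cdot (w + \alpha v_{0}) = w \cdot w + 2\alpha(w \cdot v_{0}) + \alpha^{2}(v_{0} \cdot v_{0}) = 0$
termwise, using that $w$ and $v_{0}$ are self-orthogonal and $w \in C_{0}$. For $q = 4$, one additionally uses the Frobenius identity $v_{0} \cdot w = (w \cdot v_{0})^{2} = 0$ together with $\alpha^{3} = 1$ for $\alpha \in \FF_{4} \setminus \{0\}$, after which all four terms of the sesquilinear expansion vanish. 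With self-orthogonality established, $\phi$ is a bijection and the claim follows.
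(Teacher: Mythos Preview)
Your argument is correct and is precisely the natural fleshing-out of Lemma~\ref{Lem:EqualNeighbors}: the paper itself states Lemma~\ref{Lem:NumEqualNeighbor} without proof, evidently regarding it as an immediate count once the characterisation of equal neighbors is in hand. Your bijection $\phi\colon C_{0}\times(\FF_{q}\setminus\{0\})\to S$, together with the separate verification of self-orthogonality in the Hermitian case $q=4$, makes that implicit step explicit and rigorous; the only small addition worth noting is that $\phi(w,\alpha)\notin C$ (else $\alpha v_{0}\in C$), and that the ``if'' direction of Lemma~\ref{Lem:EqualNeighbors} is what guarantees $N_{C}(\phi(w,\alpha))=N_{C}(v_{0})$, so that $\phi$ indeed lands in~$S$.
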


\subsection{Graphs}

A \emph{graph} $G := (V,E)$ consists of~$V$,
a non-empty set of \emph{vertices}, 
and $E$, a set of \emph{edges}.
An edge is usually incident with two vertices.
But if the edge incident 
with equal end vertices, the edge is called a \emph{loop}.
A graph is called \emph{simple}
if it has neither loops nor multiple edges.
The \emph{degree} of a vertex~$v$ in graph~$G$,
denoted by~$\deg_{G}(v)$,
is the number of edges that are incident to~$v$. 
The graph~$G$ is called \emph{regular} if $\deg_{G}(v)$
is same for each vertex~$v$ in~$G$.
A \emph{path} in $G$ is a sequence of edges 
$(e_1, e_2, \ldots, e_{m - 1})$ 
having a sequence of vertices 
$(v_1, v_2,\ldots, v_n)$ 
satisfying $e_i \mapsto \{v_i, v_{i + 1}\}$ 
for $i = 1, 2,\ldots, m - 1$.
If there is a path between any two vertices of a graph,
then the graph is called \emph{connected}.

\section{Type~$\III$ codes}\label{Sec:TypeIIICodes}

A self-dual code over~$\FF_3$ 
is called \emph{Type~$\III$}
if the weight of each codeword 
is congruent to $0 \pmod 3$.
We recall that Type~$\III$ 
codes of length~$n$ exists if and only if 
$n\equiv 0\pmod 4$.
Let $T_{\III}(n)$ be the number of Type~$\III$ codes of 
length~$n \equiv 0 \pmod 4$. 
Fortunately, we have an explicit formula that
gives the number $T_{\III}(n)$ as follows 
(see \cite{NRS, VP1968, RS}):

\begin{equation}\label{Equ:NumTypeIIICodes}
	T_{\III}(n)
	= 
	\prod\limits_{i=0}^{\frac{n}{2} - 1}
	(3^{i}+1).
\end{equation}

\begin{lem}\label{Lem:NumSelfOrthoVecTypeIII}
	Let $n \equiv 0\pmod 4$. 
	Then the number of self-orthogonal vectors in~$\FF_{3}^{n}$
	is $3^{n-1} + 3^{\frac{n}{2}} - 3^{\frac{n}{2}-1}$.  
\end{lem}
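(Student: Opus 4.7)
The plan is to reduce the counting problem to a weight-class enumeration via Lemma~\ref{Lem:SlfOrthoTypeIII}, and then evaluate the resulting sum using a standard roots-of-unity filter.

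First I would invoke Lemma~\ref{Lem:SlfOrthoTypeIII}: since $n \equiv 0 \pmod 4$, a vector $v \in \FF_{3}^{n}$ is self-orthogonal if and only if $\wt(v) \equiv 0 \pmod 3$. The number of vectors of weight $w$ in $\FF_{3}^{n}$ is $\binom{n}{w} 2^{w}$, because one chooses the support and then an element of $\FF_{3}^{*}$ in each chosen coordinate. Hence the quantity to compute is
\[
	S(n) \;=\; \sum_{\substack{0 \leq w \leq n \\ w \equiv 0 \pmod 3}} \binom{n}{w} 2^{w}.
\]

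Next I would apply the roots-of-unity filter with $\omega := e^{2\pi i /3}$, writing
\[
	S(n) \;=\; \tfrac{1}{3}\bigl[(1+2)^{n} + (1+2\omega)^{n} + (1+2\omega^{2})^{n}\bigr].
\]
A short computation using $\omega = -\tfrac{1}{2} + \tfrac{\sqrt{3}}{2} i$ gives $1+2\omega = i\sqrt{3}$ and $1+2\omega^{2} = -i\sqrt{3}$. Because $n \equiv 0 \pmod 4$, we have $i^{n} = (-i)^{n} = 1$, so
\[
	(1+2\omega)^{n} + (1+2\omega^{2})^{n} \;=\; 2 \cdot 3^{n/2}.
\]
Substituting back yields $S(n) = \tfrac{1}{3}(3^{n} + 2\cdot 3^{n/2}) = 3^{n-1} + 2 \cdot 3^{n/2 - 1} = 3^{n-1} + 3^{n/2} - 3^{n/2-1}$, which is the claimed formula.

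There is no real obstacle here; the only mild subtlety is verifying that the hypothesis $n \equiv 0 \pmod 4$ (rather than just $n \equiv 0 \pmod 2$) is precisely what makes $i^{n} = 1$ and therefore makes the two complex contributions combine into a positive real quantity. An alternative route would be to set up a linear recursion directly, by splitting a vector of length $n$ into its first three coordinates and remaining $n-3$ coordinates and tracking the contribution to the weight mod $3$, but the generating function argument above is cleaner and slightly more transparent.
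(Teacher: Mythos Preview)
Your proof is correct and follows essentially the same approach as the paper's own argument: both invoke Lemma~\ref{Lem:SlfOrthoTypeIII} to reduce to counting vectors of weight divisible by~$3$, and then apply the cube-root-of-unity filter to the sum $\sum_{w\equiv 0\ (3)}\binom{n}{w}2^{w}$. Your version is in fact slightly more explicit in evaluating $(1+2\omega)^{n}$ and in pinpointing where the hypothesis $n\equiv 0\pmod 4$ is used.
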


\begin{proof}
	It is immediate from~\cite[Theorem 65]{Dickson}.
	However, we give a different proof for 
	this particular case. By Lemma~\ref{Lem:SlfOrthoTypeIII}, 
	the number of self-orthogonal vectors in~$\FF_{3}^{n}$
	for~$n \equiv 0\pmod 4$ is
	\[
		C(n,0) + 2^{3} C(n,3) + 2^{6} C(n,6) + \cdots +3^{n} C(n,n),
	\]
	where $C(n,k)$ is the binomial function. 
	Now let $\omega$ be the primitive cube root of unity,
	satisfying $\omega^3 = 1$ and $1 + \omega+\omega^2 = 0$.
	Then immediately we can have
	\begin{align*}
		&C(n,0) + 2^{3} C(n,3) + 2^{6} C(n,6) + \cdots +2^{n} C(n,n)\\
		& =
		\frac{3^n + (1+2\omega)^{n} + (1 + 2\omega^2)^{n}}{3}\\
		& =
		\frac{3^{n} + 2.3^{\frac{n}{2}}}{3}.
	\end{align*}
	Hence, the number of 
	self-orthogonal vectors 
	is $3^{n-1} + 3^{\frac{n}{2}} - 3^{\frac{n}{2}-1}$. 
\end{proof}

\begin{thm}\label{Thm:NeighborTypeIII}
	Let~$C$ be a Type~$\III$ code of length~$n \equiv 0 \pmod 4$.
	Then $N_{C}(v)$ is a Type~$\III$ code if and only if $v \in \FF_{3}^{n}$ is a self-orthogonal vector not in~$C$. 
\end{thm}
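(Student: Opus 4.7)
The plan is to reduce the theorem to two ingredients: (i) the self-duality of $N_{C}(v)$, which I will verify directly, and (ii) Lemma~\ref{Lem:SlfOrthoTypeIII}, which will then upgrade self-duality to the Type~$\III$ property for free. The point is that, once $N_{C}(v)$ is known to be self-dual, every codeword is self-orthogonal, and over~$\FF_{3}$ with $n \equiv 0 \pmod 4$ self-orthogonality of a vector is equivalent to its weight being divisible by~$3$.

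For the ``if'' direction, assume $v \in \FF_{3}^{n}$ is self-orthogonal with $v \notin C$. The first step is to check $\dim N_{C}(v) = n/2$. The map $w \mapsto w \cdot v$ is a linear functional on $C$; it is nonzero, for otherwise $v \in C^{\perp} = C$ would contradict $v \notin C$. Hence $C_{0}$, being the kernel of this functional, has codimension~$1$ in~$C$, so $\dim C_{0} = n/2 - 1$; since $v \notin C \supseteq C_{0}$, adjoining $v$ to $C_{0}$ yields $\dim N_{C}(v) = n/2$. The second step is to verify self-orthogonality: for $w_{1}, w_{2} \in C_{0}$ and $\alpha_{1}, \alpha_{2} \in \FF_{3}$, I expand
\[
(w_{1} + \alpha_{1} v) \cdot (w_{2} + \alpha_{2} v) = w_{1} \cdot w_{2} + \alpha_{2}\, w_{1} \cdot v + \alpha_{1}\, v \cdot w_{2} + \alpha_{1} \alpha_{2}\, v \cdot v,
\]
and note that each of the four summands vanishes in turn by self-duality of $C$, by $w_{i} \in C_{0}$, and by the hypothesis $v \cdot v = 0$. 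Matching dimensions promotes $N_{C}(v) \subseteq N_{C}(v)^{\perp}$ to equality, so $N_{C}(v)$ is self-dual. Applying Lemma~\ref{Lem:SlfOrthoTypeIII} to every codeword then shows that every weight of $N_{C}(v)$ is divisible by~$3$, so $N_{C}(v)$ is Type~$\III$.

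For the ``only if'' direction, suppose $N_{C}(v)$ is Type~$\III$. Since $N_{C}(v)$ is in particular self-dual and $v \in N_{C}(v)$, we obtain $v \cdot v = 0$, so $v$ is self-orthogonal. The condition $v \notin C$ is baked into the construction of $N_{C}(v)$ as a proper neighbor of~$C$: if $v$ lay in~$C$ then $C_{0}$ would equal~$C$ itself and $N_{C}(v) = C$, collapsing the neighbor construction.

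The main potential obstacle is the codimension count for~$C_{0}$, which depends critically on $v \notin C^{\perp} = C$; once this is in hand, the remaining steps (self-orthogonality via the four-term expansion, Type~$\III$ via Lemma~\ref{Lem:SlfOrthoTypeIII}) are essentially mechanical.
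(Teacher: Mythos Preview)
Your proof is correct and follows essentially the same approach as the paper's: both establish that every element of $N_{C}(v)$ is self-orthogonal via the bilinear expansion and then invoke Lemma~\ref{Lem:SlfOrthoTypeIII} to conclude the weights are divisible by~$3$. Your version is in fact more explicit, since you verify the dimension count and self-duality directly rather than (as the paper does) appealing to the preliminary remark that $N_{C}(v)$ is automatically a neighbor.
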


\begin{proof}
	Suppose $N_{C}(v)$ is a Type~$\III$ code.
	Then $v$ must be a self-orthogonal vector, 
	otherwise $N_{C}(v)$ will no longer a Type~$\III$ code.
	
	Conversely, suppose that $v \in \FF_{3}^{n}$ is a self-orthogonal vector not in~$C$. 
	Then by Lemma~\ref{Lem:SlfOrthoTypeIII},
	$\wt(v) \equiv 0\pmod 3$.
	Since $C$ is a Type~$\III$ code, therefore 
	$C_{0} = \{w \in C \mid w \cdot v = 0\}$
	is a subcode of~$C$ with co-dimension~$1$.
	Let $w \in C_{0}$.
	Then 
	$\wt(w + v) \equiv 0 \pmod 3$, 
	since self-orthogonal vectors $w$ and $v$ 
	are orthogonal to each other and 
	\begin{align*}
		(w + v)
		\cdot
		(w + v)
		& =
		w\cdot w
		+
		2 w\cdot v
		+
		v\cdot v
		= 0.
	\end{align*}
	Therefore, the weight of each vector in~$N_{C}(v)$ 
	is a multiple of~$3$ and hence $N_{C}(v)$
	is a Type~$\III$ code. 
\end{proof}

\begin{thm}\label{Thm:ConnectTypeIII}
	Let $n \equiv 0 \pmod 4$. Let $C$ be a Type~$\III$ code of length~$n$.
	If $C^{\prime} = N_{C}(v)$ for some self-orthogonal vector~$v \in \FF_{3}^{n}$, then $C = N_{C^{\prime}}(w)$
	for some self-orthogonal vector~$w \in \FF_{3}^{n}$.
\end{thm}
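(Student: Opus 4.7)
The plan is to establish symmetry of the neighbor relation for Type~$\III$ codes by explicitly producing the witness~$w$. Write $C_{0} = \{u \in C \mid u \cdot v = 0\}$ for the hyperplane of~$C$ used in forming $C' = N_{C}(v) = \langle C_{0}, v \rangle$. Since $\dim C_{0} = n/2 - 1$ and $C \neq C'$, we have $C \cap C' = C_{0}$. The natural candidate is any $w \in C$ with $w \cdot v \neq 0$, equivalently any codeword of~$C$ lying outside the hyperplane~$C_{0}$; such a~$w$ exists because $C_{0}$ is a proper subcode of~$C$.

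First I would verify that~$w$ is a valid input to the neighbor construction for~$C'$, namely that $w$ is self-orthogonal and $w \notin C'$. Self-orthogonality is immediate from $w \in C = C^{\perp}$. For the second condition, any element of~$C'$ has the form $u_{0} + \alpha v$ with $u_{0} \in C_{0}$ and $\alpha \in \FF_{3}$, and such an element is automatically orthogonal to~$v$ (since $u_{0} \cdot v = 0$ and $v \cdot v = 0$); if $w$ were in~$C'$, this would contradict $w \cdot v \neq 0$.

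Next I would identify $(C')_{0} := \{u \in C' \mid u \cdot w = 0\}$. For a typical element $u = u_{0} + \alpha v \in C'$, the self-duality of~$C$ gives $u_{0} \cdot w = 0$, hence $u \cdot w = \alpha(v \cdot w)$, which vanishes precisely when $\alpha = 0$. Thus $(C')_{0} = C_{0}$, and consequently
\[
	N_{C'}(w) = \langle C_{0}, w \rangle.
\]
Because $w \in C \setminus C_{0}$, this span lies inside~$C$ and has dimension $n/2$, so it coincides with~$C$, which completes the proof.

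The only step requiring thought is the choice of witness: $w$ must be picked from inside~$C$ yet outside the hyperplane~$C_{0}$, so that swapping the roles of $C$ and $C'$ preserves the common subcode~$C_{0}$. Once~$w$ is chosen this way, the remaining verifications are elementary dimension and orthogonality computations, and no additional obstacle arises from the Type~$\III$ restriction because self-orthogonality of~$w$ is automatic from $C \subseteq C^{\perp}$ and does not rely on Lemma~\ref{Lem:SlfOrthoTypeIII}.
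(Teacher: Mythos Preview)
Your proof is correct and follows essentially the same approach as the paper: both pick $w \in C \setminus C_{0}$, show that the hyperplane $(C')_{0}$ of $C'$ orthogonal to $w$ coincides with $C_{0}$, and conclude $N_{C'}(w) = \langle C_{0}, w\rangle = C$. Your version is in fact more careful than the paper's in that you explicitly verify $w \notin C'$ (needed for $(C')_{0}$ to have codimension~$1$) and you correctly observe that self-orthogonality of $w$ comes for free from $C \subseteq C^{\perp}$, so the appeal to Lemma~\ref{Lem:SlfOrthoTypeIII} in the paper's argument is unnecessary.
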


\begin{proof}
	Let $C$ be a Type~$\III$ code
	of length~$n \equiv 0\pmod 4$.
	Let $v \in \FF_{3}^{n}$ be a self-orthogonal 
	vector not in~$C$.
	Then by Lemma~\ref{Lem:SlfOrthoTypeIII},
	we get $\wt(v) \equiv 0 \pmod 3$.
	Then 
	$C_{0} = \{u \in C \mid u \cdot v = 0\}$
	is a subcode of~$C$ with co-dimension~$1$.
	This implies $C = \langle C_{0}, w \rangle$ 
	for some self-orthogonal vector~$w$. 
	By Lemma~\ref{Lem:SlfOrthoTypeIII}, $\wt(w) \equiv 0\pmod 3$.
	Clearly, $w$ is orthogonal to each vector in~$C_{0}$.
	Moreover, by Theorem~\ref{Thm:NeighborTypeIII}, 
	we have
	$C^{\prime} = N_{C}(v)$ is a Type~$\III$ code.
	This implies $N_{C^{\prime}}(w)$ is also a Type~$\III$
	code. 
	Let $C_{0}^{\prime} = \{u^{\prime} \in C^{\prime} \mid u^{\prime}\cdot w = 0\}$.
	Then $C_{0}^{\prime}$ is a subcode of~$C^{\prime}$ with co-dimension~$1$.
	Since $w$ is orthogonal to each vector in~$C_{0}$,
	therefore  $C_{0}^{\prime} = C_{0}$.
	Hence $C = \langle C_{0}, w \rangle = \langle C_{0}^{\prime}, w \rangle = N_{C^{\prime}}(w)$.
\end{proof}

\begin{df}
	Let $n \equiv 0\pmod 4$.
	Let $V_{\III}(n)$ be the set of all Type~$\III$ codes 
	of length~$n$.
	Let $\Gamma_{\III}(n):=(V_{\III}(n),E_{\III}(n))$ be a graph, 
	where any two vertices in~$V_{\III}(n)$ are connected 
	by an edge in~$E_{\III}(n)$ if and only if they are neighbors. 
\end{df}

The following theorems gives basic properties of $\Gamma_{\III}(n)$ 
and answers the various counting questions related to it. 

\begin{thm}\label{Thm:NeighborGraphTypeIII}
	The graph $\Gamma_{\III}(n)$ is simple and undirected.
\end{thm}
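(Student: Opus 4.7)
The plan is to verify directly the three conditions defining a simple undirected graph: symmetry of the edge relation (undirectedness), absence of loops, and absence of multiple edges. All three should follow cleanly from Definition~1.1 and the construction of $\Gamma_{\III}(n)$, so I expect the proof to be short, with the main work being a careful unpacking of what the neighbor relation asserts.

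For the undirected property, I would note that the neighbor relation is symmetric directly from its definition: whether two distinct Type~$\III$ codes~$C$ and~$C'$ share a subcode of dimension $\tfrac{n}{2}-1$ is a statement about the intersection $C \cap C'$, which is symmetric in the two codes. Hence an edge joining two vertices corresponds to the same unordered pair regardless of which endpoint one reads from. This is also compatible with Theorem~\ref{Thm:ConnectTypeIII}, which shows that the operation $C \mapsto N_{C}(v)$ is reversible, although that stronger statement is not actually needed here.

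For simplicity, I would treat loops and multiple edges separately. A loop at a vertex~$C$ would mean $C$ is its own neighbor, i.e.\ $C \cap C$ would have to be a subcode of~$C$ of codimension~$1$; but $C \cap C = C$ has codimension~$0$ in~$C$, so no such loop can occur. Multiple edges are ruled out because $E_{\III}(n)$ is defined as the set of unordered pairs realizing the neighbor relation, and any such pair either satisfies the relation or does not, producing at most one edge. I do not foresee any genuine obstacle; the only subtlety worth highlighting is the clarification in Definition~1.1 that the intersection is \emph{exactly} of codimension~$1$, which is precisely what forces distinctness of the two codes and hence prevents loops.
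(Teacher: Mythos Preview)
Your proposal is correct and follows essentially the same two-step structure as the paper: rule out loops because a code cannot be its own neighbor, and observe that the edge relation is symmetric. The one minor difference is that the paper justifies symmetry by invoking Theorem~\ref{Thm:ConnectTypeIII} (reversibility of the $N_{C}(v)$ construction), whereas you argue more directly from Definition~1.1 that the intersection $C\cap C'$ is symmetric in its arguments; your route is slightly more elementary and does not require the preceding theorem. Your explicit treatment of multiple edges is not in the paper's proof but is harmless and correct.
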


\begin{proof}
	Since any Type~$\III$ code is not a neighbor of itself, therefore
	$\Gamma_{\III}(n)$ contains no loop. 
	Moreover, by Theorem~\ref{Thm:ConnectTypeIII}, 
	we have if $C$ is connected to~$C^{\prime}$, 
	then $C^{\prime}$ is connected to~$C$.
	This implies $\Gamma_{\III}(n)$ is not a directed graph.
\end{proof}

\begin{thm}\label{Thm:MaxDistanceTypeIII}
	Let $n \equiv 0\pmod 4$. 
	Then the graph $\Gamma_{\III}(n)$ is connected 
	with maximum path length $\frac{n}{2}$ 
	between two vertices.
\end{thm}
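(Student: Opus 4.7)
The plan is to prove, by induction on $k := \frac{n}{2} - \dim(C \cap C')$, that any two Type~$\III$ codes $C, C'$ of length $n$ are joined by a path in $\Gamma_{\III}(n)$ of length at most $k$. Since $\dim(C \cap C') \geq 0$, this immediately yields the upper bound $\frac{n}{2}$ on the diameter (and in particular connectedness).

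The base case $k=0$ gives $C=C'$ and an empty path. For the inductive step, assuming $C \neq C'$, the central task is to construct a Type~$\III$ code $C''$ which is a neighbor of $C$ and satisfies $\dim(C'' \cap C') > \dim(C \cap C')$. I would pick any vector $v \in C' \setminus C$; since $C'$ is self-dual, $v$ is self-orthogonal, and by hypothesis $v \notin C$, so by Theorem~\ref{Thm:NeighborTypeIII} the code $C'' := N_{C}(v) = \langle C_0, v\rangle$ is Type~$\III$ and is a neighbor of $C$ by construction, where $C_0 = \{w \in C \mid w \cdot v = 0\}$.

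The key observation driving the induction is that $C \cap C' \subseteq C_0$: for any $w \in C \cap C'$, both $w$ and $v$ lie in the self-dual code $C'$, so $w \cdot v = 0$. Hence $C \cap C' \subseteq C_0 \subseteq C''$, and combined with $v \in C'' \cap C'$ together with $v \notin C \cap C'$, this gives $\dim(C'' \cap C') \geq \dim(C \cap C') + 1$. Thus $\frac{n}{2} - \dim(C'' \cap C') \leq k - 1$, and the induction hypothesis produces a path of length at most $k-1$ from $C''$ to $C'$; prepending the edge $C$--$C''$ yields a path of length at most $k$ from $C$ to $C'$.

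The main subtlety to get right is the self-duality argument that forces $C \cap C' \subseteq C_0$, since without it one cannot guarantee that the neighbor step actually increases the intersection with $C'$. To confirm the bound $\frac{n}{2}$ is attained, I would exhibit a pair of Type~$\III$ codes with trivial intersection (e.g., taking $C$ and a suitable monomial transform $C'$ whose supports of generators are arranged so that $C \cap C' = \{\0\}$), since then $k = \frac{n}{2}$ and the above construction witnesses the diameter exactly.
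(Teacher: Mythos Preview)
Your argument is correct and follows the same underlying idea as the paper --- step toward the target code $C'$ by successively forming neighbors using vectors of $C'$ --- but your inductive framing is cleaner. The paper fixes a generating set $C_{2}=\langle v_{1},\dots,v_{n/2}\rangle$, sets $D_{i}:=N_{D_{i-1}}(v_{i})$, and asserts $D_{n/2}=C_{2}$; the justification that previously--added generators $v_{j}$ survive each neighbor step is left implicit (it works because $v_{j}\cdot v_{i}=0$ in $C_{2}$, exactly your ``key observation'' $C\cap C'\subseteq C_{0}$). By tracking $\dim(C\cap C')$ explicitly you in fact prove the sharper bound $d(C,C')\le \tfrac{n}{2}-\dim(C\cap C')$, which the paper later uses (in defining $k$-neighbors) without proof.

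For the lower bound, the paper exhibits a concrete length-$4$ pair $C_{1}',C_{2}'$ with $C_{1}'\cap C_{2}'=\{\0\}$ and then takes $k$-fold direct sums to reach any $n\equiv 0\pmod 4$. Your proposal to use a ``suitable monomial transform'' is plausible but not yet a proof; you should either carry that out explicitly or adopt the paper's direct-sum construction, since attaining $C\cap C'=\{\0\}$ is the only remaining content.
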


\begin{proof}
	Let $C_{1}$ and $C_{2}$ be two Type~$\III$ 
	codes of length~$n \equiv 0\pmod 4$.
	Let $C_{2} = \langle v_{1},\ldots, v_{\frac{n}{2}}\rangle$.
	Then each~$v_{i}$ is a self-orthogonal vector 
	such that $\wt(v_{i}) \equiv 0 \pmod 3$.
	Let $D_{1} := N_{C_{1}}(v_{1})$ 
	and $D_{i} := N_{D_{i-1}}(v_{i})$
	for $i = 2,\ldots, \frac{n}{2}$.
	Then $C_{1}$, $D_{1}$, $D_{2}$, $\ldots$, $D_{\frac{n}{2}} = C_{2}$
	is the path from $C_{1}$ to $C_{2}$.
	Hence the graph $\Gamma_{\III}(n)$ is connected. 
	By 
	Example~\ref{Ex:MaxPathLength}, 
	we can have two Type~$\III$ codes, say $C_{1}^{\prime}$ 
	and $C_{2}^{\prime}$ such that the maximum path length 
	between them in~$\Gamma_{\III}(4)$ is $2$.
	Now let the following $k$-times direct sums for positive integer~$k$:
	\begin{align*}
		C_{1} & = C_{1}^{\prime} \oplus \cdots \oplus C_{1}^{\prime},\\
		C_{2} & = C_{2}^{\prime} \oplus \cdots \oplus C_{2}^{\prime}.
	\end{align*}
	This implies the length of $C_{1}$ and $C_{2}$
	is $4k$ and $C_{1}\cap C_{2} = \0$. Hence there exists two Type~$\III$ 
	codes of length $n \equiv 0\pmod 4$ such that
	the maximum path length is $\frac{n}{2}$ in 
	the graph~$\Gamma_{\III}(n)$ is $\frac{n}{2}$.
\end{proof} 

\begin{ex}\label{Ex:MaxPathLength}
	Let $C_{1}$ be a code of length~$4$ over~$\FF_{3}$ with generator matrix:
	\[
		\begin{pmatrix}
			1 & 0 & 1 & 1\\
			0 & 1 & 1 & 2
		\end{pmatrix}.
	\]
	It is easy to check that $C_{1}$ is a Type~$\III$ code.
	Then $D_{1} := N_{C_{1}}(v_{1})$ is a neighbor of~$C_{1}$,
	where $v_{1} = (1,0,2,2)$ is a self-orthogonal 
	vector in $\FF_{3}^{4}$ and not in~$C_{1}$.
	Also, $D_{2} := N_{D_{1}}(v_{2})$ is a neighbor of~$D_{1}$,
	where $v_{2} = (0,1,2,1)$ is a self-orthogonal 
	vector in $\FF_{3}^{4}$ and not in~$D_{1}$.
	Immediately, $D_{1}$ and $D_{2}$ are Type~$\III$ codes.
	The generator matrix of $D_{2}$ is:
	\[
	\begin{pmatrix}
		1 & 0 & 2 & 2\\
		0 & 1 & 2 & 1
	\end{pmatrix}.
	\]
	Moreover, we can see that $C_{1} \cap D_{2} = \bm{0}$.
	This conclude that the maximum path length 
	of the graph $\Gamma_{\III}(4)$ is $2$. 
\end{ex}

\begin{thm}\label{Thm:GrVertexNumTypeIII}
	Let $n \equiv 0\pmod 4$.
	Then the number of vertices in $\Gamma_{\III}(n)$ is
	$\prod_{i=0}^{\frac{n}{2} - 1}(3^{i}+1)$.
\end{thm}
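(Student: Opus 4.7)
The plan is to recognize that this statement is essentially a restatement of the classical counting formula~\eqref{Equ:NumTypeIIICodes} phrased in the language of the neighbor graph. By the definition of $\Gamma_{\III}(n)$, the vertex set $V_{\III}(n)$ is exactly the collection of all Type~$\III$ codes of length~$n$ with $n\equiv 0\pmod 4$. Hence the number of vertices in $\Gamma_{\III}(n)$ equals $T_{\III}(n)$, and invoking~\eqref{Equ:NumTypeIIICodes} gives the claimed product $\prod_{i=0}^{n/2-1}(3^{i}+1)$. So the proof is a one-line unpacking of the definition followed by a citation.

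If instead one wanted a self-contained derivation from the machinery developed so far, the route would be: fix a Type~$\III$ code $C$, use Lemma~\ref{Lem:NumSelfOrthoVecTypeIII} to count self-orthogonal vectors in $\FF_{3}^{n}$, subtract the self-orthogonal vectors contained in $C$ itself, and divide by $(q-1)q^{n/2-1}=2\cdot 3^{n/2-1}$ from Lemma~\ref{Lem:NumEqualNeighbor} in order to obtain the number of distinct neighbors of $C$ in $\Gamma_{\III}(n)$. Combining this with the symmetry of the neighbor relation established in Theorem~\ref{Thm:ConnectTypeIII} and the connectedness from Theorem~\ref{Thm:MaxDistanceTypeIII}, one could set up a double counting or recursion on $n$ to recover the product formula independently. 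This is however considerably more involved than the direct argument.

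Consequently, there is no real obstacle in this proof: the only substantive content is the identification of $V_{\III}(n)$ with the set of all Type~$\III$ codes of length~$n$, and the rest follows from the already-stated formula~\eqref{Equ:NumTypeIIICodes}. I would write the proof as a single short paragraph quoting~\eqref{Equ:NumTypeIIICodes} and noting that $|V_{\III}(n)|=T_{\III}(n)$ by definition.
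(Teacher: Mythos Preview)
Your proposal is correct and matches the paper's own proof: the paper simply observes that the vertex set $V_{\III}(n)$ is by definition the set of Type~$\III$ codes of length~$n$ and then cites formula~\eqref{Equ:NumTypeIIICodes}. Your additional remarks about a self-contained derivation go beyond what the paper does, but your proposed write-up (a one-line unpacking of the definition plus the citation) is exactly what appears there.
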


\begin{proof}
	We can have the number of Type~$\III$ codes of length~$n\equiv 0\pmod 4$ from~(\ref{Equ:NumTypeIIICodes}).
	This completes the proof.
\end{proof}

\begin{lem}\label{Lem:NumEqualNeighborTypeIII}
	Let $n \equiv 0 \pmod 4$. 
	Let~$C$ be a Type~$\III$ code of length~$n$.
	Suppose~$v_{0} \in \FF_{3}^{n}$ 
	be a self-orthogonal vector not in~$C$.
	Then the number of self-orthogonal vectors~$v \in \FF_{3}^{n}$
	such that $N_{C}(v) = N_{C}(v_{0})$
	is $2.3^{\frac{n}{2}-1}$.
\end{lem}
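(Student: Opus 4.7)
The plan is to derive this from Lemma~\ref{Lem:EqualNeighbors} together with the characterisation of self-orthogonality in Lemma~\ref{Lem:SlfOrthoTypeIII}. By Lemma~\ref{Lem:EqualNeighbors}, a vector $v \in \FF_{3}^{n}$ satisfies $N_{C}(v) = N_{C}(v_{0})$ precisely when $v = w + \alpha v_{0}$ for some $w \in C$ with $w \cdot v_{0} = 0$ and some nonzero $\alpha \in \FF_{3}$. Write $C_{0} := \{w \in C \mid w \cdot v_{0} = 0\}$; since $v_{0} \notin C = C^{\perp}$, the linear functional $u \mapsto u \cdot v_{0}$ on $C$ is nonzero, so $C_{0}$ is a hyperplane in $C$ and $|C_{0}| = 3^{\frac{n}{2} - 1}$.

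The first step is to verify that every such $v = w + \alpha v_{0}$ is automatically self-orthogonal, so the qualifier in the statement imposes no extra restriction. A direct computation gives
\[
v \cdot v = w \cdot w + 2\alpha\, (w \cdot v_{0}) + \alpha^{2}\, (v_{0} \cdot v_{0}).
\]
The middle summand vanishes by the definition of $C_{0}$, the last summand vanishes because $v_{0}$ is self-orthogonal by hypothesis, and the first summand vanishes because every codeword of the Type~$\III$ code $C$ has weight divisible by~$3$ and is therefore self-orthogonal by Lemma~\ref{Lem:SlfOrthoTypeIII}.

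The second step is to check that the parametrisation $(w, \alpha) \mapsto w + \alpha v_{0}$ is injective. If $w_{1} + \alpha_{1} v_{0} = w_{2} + \alpha_{2} v_{0}$ with $w_{i} \in C_{0}$ and nonzero $\alpha_{i} \in \FF_{3}$, then $(\alpha_{1} - \alpha_{2}) v_{0} = w_{2} - w_{1} \in C$; if $\alpha_{1} \neq \alpha_{2}$ this would force $v_{0} \in C$, contrary to assumption. Hence $\alpha_{1} = \alpha_{2}$ and then $w_{1} = w_{2}$. The desired count is therefore $|C_{0}| \cdot 2 = 2 \cdot 3^{\frac{n}{2} - 1}$.

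The argument is essentially routine; the one conceptual point worth highlighting is that the self-orthogonality of $v$ is automatic precisely because $C$ is of Type~$\III$. In a more general self-dual setting one would have to restrict $w$ to the self-orthogonal part of $C$, so the Type~$\III$ hypothesis is exactly what permits this clean enumeration.
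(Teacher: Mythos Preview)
Your argument is correct and is essentially a fleshed-out version of the paper's one-line proof. The paper simply invokes Lemma~\ref{Lem:NumEqualNeighbor} (the general count $(q-1)q^{n/2-1}$ for any self-dual code over~$\FF_q$) and specialises to $q=3$; you instead go back to Lemma~\ref{Lem:EqualNeighbors} and redo the enumeration explicitly, which amounts to reproving Lemma~\ref{Lem:NumEqualNeighbor} in this case.

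One remark on your closing paragraph: the claim that ``in a more general self-dual setting one would have to restrict $w$ to the self-orthogonal part of $C$'' is not right. For \emph{any} self-dual code $C=C^{\perp}$ over $\FF_q$, every codeword $w$ satisfies $w\cdot w=0$ automatically, since $w\in C\subseteq C^{\perp}$. So the vanishing of $w\cdot w$ comes from self-duality alone, not from the Type~$\III$ hypothesis; this is precisely why Lemma~\ref{Lem:NumEqualNeighbor} is stated uniformly for all self-dual codes over~$\FF_q$. Your detour through Lemma~\ref{Lem:SlfOrthoTypeIII} to justify $w\cdot w=0$ is therefore unnecessary (though not incorrect).
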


\begin{proof}
	By Lemmas~\ref{Lem:SlfOrthoTypeIII} and \ref{Lem:NumEqualNeighbor},
	we can obtain the result.
\end{proof}

\begin{thm}\label{Thm:GrDegTypeIII}
	Let $n \equiv 0\pmod 4$.
	Then the graph~$\Gamma_{\III}(n)$ is regular with degree~$\frac{1}{2}(3^{\frac{n}{2}} - 1)$.	
\end{thm}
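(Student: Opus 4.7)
The plan is to compute the degree of an arbitrary vertex $C \in V_{\III}(n)$ and observe that the answer does not depend on $C$, whence regularity is immediate. By Theorem~\ref{Thm:NeighborTypeIII}, the neighbors of $C$ in $\Gamma_{\III}(n)$ are precisely the codes of the form $N_{C}(v)$ as $v$ ranges over the self-orthogonal vectors of $\FF_{3}^{n}$ lying outside $C$. So I would first count such vectors $v$, and then quotient by the size of the fibers of the map $v \mapsto N_{C}(v)$.

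For the first count, I would invoke Lemma~\ref{Lem:NumSelfOrthoVecTypeIII}, which gives $3^{n-1}+3^{n/2}-3^{n/2-1}$ self-orthogonal vectors in $\FF_{3}^{n}$ in total. Since $C$ is Type~$\III$ of dimension $n/2$, every one of its $3^{n/2}$ codewords is self-orthogonal, so the number of self-orthogonal vectors \emph{not} in $C$ equals
\[
3^{n-1}+3^{n/2}-3^{n/2-1}-3^{n/2} = 3^{n-1}-3^{n/2-1} = 3^{n/2-1}(3^{n/2}-1).
\]

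For the fiber size, Lemma~\ref{Lem:NumEqualNeighborTypeIII} tells me that for any fixed self-orthogonal $v_{0}\notin C$ there are exactly $2\cdot 3^{n/2-1}$ self-orthogonal vectors $v$ with $N_{C}(v)=N_{C}(v_{0})$. Because every neighbor of $C$ that is Type~$\III$ is realized as $N_{C}(v)$ for some such $v$, the number of Type~$\III$ neighbors of $C$ is
\[
\deg_{\Gamma_{\III}(n)}(C) = \frac{3^{n/2-1}(3^{n/2}-1)}{2\cdot 3^{n/2-1}} = \frac{3^{n/2}-1}{2}.
\]
Since this value is independent of the choice of $C$, the graph $\Gamma_{\III}(n)$ is regular of the claimed degree. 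There is no real obstacle here; the only subtlety worth double-checking is that the full codeword-set of $C$ is already self-orthogonal (so that the subtraction step above is valid), and this follows directly from $C \subseteq C^{\perp}$ combined with Lemma~\ref{Lem:SlfOrthoTypeIII}.
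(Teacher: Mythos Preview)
Your proposal is correct and follows essentially the same approach as the paper: count the self-orthogonal vectors outside $C$ using Lemma~\ref{Lem:NumSelfOrthoVecTypeIII}, divide by the fiber size $2\cdot 3^{n/2-1}$ from Lemma~\ref{Lem:NumEqualNeighborTypeIII}, and observe the result is independent of $C$. If anything, your write-up is slightly more explicit in justifying the subtraction of $|C|=3^{n/2}$ and in invoking Theorem~\ref{Thm:NeighborTypeIII} to identify the neighbors with the codes $N_C(v)$.
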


\begin{proof}
	Let $C$ be a Type~$\III$ code of length~$n \equiv 0\pmod 4$.
	Then by Lemma~\ref{Lem:NumSelfOrthoVecTypeIII}, 
	we have the number of self-orthogonal vectors in~$\FF_{3}^{n}$ 
	but not in~$C$ is $3^{n-1}-3^{\frac{n}{2}-1}$.
	Moreover, by Lemma~\ref{Lem:NumEqualNeighborTypeIII}, 
	each Type~$\III$ code of length~$n$ occurs
	$2.3^{\frac{n}{2}-1}$ times. 
	Hence the degree of each vertex~$v$ in $\Gamma_{\III}(n)$
	is
	\begin{align*}
		\deg_{\Gamma_{\III}(n)}(v)
		=
		\frac{3^{n-1}-3^{\frac{n}{2}-1}}{2.3^{\frac{n}{2}-1}}
		=
		\frac{3^{\frac{n}{2}}-1}{2}.
	\end{align*}
\end{proof}

\begin{thm}\label{Thm:GrNeighborEdgeNum}
	Let $n \equiv 0\pmod 4$.
	Then the number of edges in~$\Gamma_{\III}(n)$ is
	$$\frac{1}{2}\left( 
	\prod_{i=1}^{\frac{n}{2} - 1}
	(3^{i}+1)
	\right)
	(3^{\frac{n}{2}} - 1).$$
\end{thm}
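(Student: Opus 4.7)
The plan is to apply the handshake lemma: in any simple undirected graph, the number of edges equals $\tfrac{1}{2}\sum_{v \in V}\deg(v)$, and since $\Gamma_{\III}(n)$ is regular by Theorem~\ref{Thm:GrDegTypeIII}, this sum simplifies to $\tfrac{1}{2}\,|V_{\III}(n)|\cdot d$, where $d$ is the common degree. Theorem~\ref{Thm:NeighborGraphTypeIII} ensures the graph is simple and undirected, so the handshake lemma applies.

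Next I would substitute the known values. By Theorem~\ref{Thm:GrVertexNumTypeIII},
\[
	|V_{\III}(n)|
	=
	\prod_{i=0}^{\frac{n}{2}-1}(3^{i}+1)
	=
	(3^{0}+1)\prod_{i=1}^{\frac{n}{2}-1}(3^{i}+1)
	=
	2\prod_{i=1}^{\frac{n}{2}-1}(3^{i}+1),
\]
and by Theorem~\ref{Thm:GrDegTypeIII}, $d = \tfrac{1}{2}(3^{\frac{n}{2}}-1)$. Multiplying gives
\[
	|E_{\III}(n)|
	=
	\frac{1}{2}\cdot 2\prod_{i=1}^{\frac{n}{2}-1}(3^{i}+1)\cdot \frac{1}{2}(3^{\frac{n}{2}}-1)
	=
	\frac{1}{2}\left(\prod_{i=1}^{\frac{n}{2}-1}(3^{i}+1)\right)(3^{\frac{n}{2}}-1),
\]
which is the desired formula.

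There is no real obstacle here; the proof is a direct consequence of the two preceding theorems together with the handshake lemma. The only minor bookkeeping is to pull the $i=0$ factor out of the product for $|V_{\III}(n)|$ (which contributes a factor of $2$) so that it cancels with one of the halves, leaving the stated product running from $i=1$. In particular, no new combinatorial argument about self-orthogonal vectors or neighbor structure is needed beyond what is already established.
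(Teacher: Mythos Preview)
Your proof is correct and follows essentially the same approach as the paper: apply the handshake lemma to the regular graph $\Gamma_{\III}(n)$, using the vertex count from Theorem~\ref{Thm:GrVertexNumTypeIII} and the degree from Theorem~\ref{Thm:GrDegTypeIII}, and simplify by extracting the $i=0$ factor from the product.
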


\begin{proof}
	By Theorem~\ref{Thm:GrVertexNumTypeIII}, the number of vertex
	in the graph~$\Gamma_{\III}(n)$ 
	is~$2\prod_{i=1}^{\frac{n}{2} - 1}(3^{i}+1)$.
	Since the graph is regular with degree~$\frac{1}{2}(3^{\frac{n}{2}}-1)$. 
	Therefore,
	\[
		2|E_{\III}(n)|
		=
		\left(
		2 
		\prod_{i=1}^{\frac{n}{2} - 1}
		(3^{i}+1)
		\right)
		\frac{1}{2}(3^{\frac{n}{2}} - 1).
	\]
	This gives the result.
\end{proof}



In the graph~$\Gamma_{\III}(n)$, 
if the shortest path between two vertices has length~$k$,
we call the two vertices are in distance~$k$ apart. 
In this case, the corresponding two Type~$\III$ codes 
in~$V_{\III}(n)$ are called $k$-neighbors and
share a subcode of co-dimension~$k$. 

\begin{rem}\label{Rem:ZeroNeighborTypeIII}
	Every Type~$\III$ code in~$V_{\III}(n)$ is its $0$-neighbor.
\end{rem}

Let $C$ be a Type~$\III$ code of length~$n \equiv 0 \pmod 4$. 
For any non-negative integer~$k$, 
we denote the number of Type~$\III$ $k$-neighbors of~$C$ 
by $L_{k}^{\III}(n)$
in $\Gamma_{\III}(n)$. 
By Remark~\ref{Rem:ZeroNeighborTypeIII},
we have $L_{0}^{\III}(n) = 1$.
Now the following theorem gives 
$L_{k}^{\III}(n)$ 
for $k > 0$.

\begin{thm}\label{Thm:k-neighbors}
	Let $n \equiv 0 \pmod 4$. 
	Let $C$ be a Type~$\III$ code of length~$n$. 
	Then for $k > 0$, we have
	\[
	L_{k}^{\III}(n)
	=
	\dfrac{\prod_{i=0}^{k - 1} (3^{n-1-i}-3^{\frac{n}{2}-1})}{\prod_{j = 0}^{k-1}(3^{\frac{n}{2}}-3^{\frac{n}{2}-1-j})}.
	\]
\end{thm}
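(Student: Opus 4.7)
The plan is to perform a double count. Define $\mathcal{T}_k$ to be the set of tuples $\mathbf{v} = (v_1,\dots,v_k) \in (\FF_3^n)^k$ such that every $v_i$ is self-orthogonal, $v_i \cdot v_j = 0$ for all $i,j$, and $v_1,\dots,v_k$ are linearly independent modulo $C$. To each $\mathbf{v}$ assign the extension
\[
N_C(\mathbf{v}) := \langle C_0(\mathbf{v}), v_1, \dots, v_k \rangle, \qquad C_0(\mathbf{v}) := \{w \in C : w \cdot v_i = 0 \text{ for all } i\}.
\]
Linear independence modulo $C$ makes the evaluation $w \mapsto (w \cdot v_i)_i$ from $C$ to $\FF_3^k$ surjective, so $\dim C_0(\mathbf{v}) = n/2 - k$. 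A direct argument following Theorem~\ref{Thm:NeighborTypeIII} shows that $N_C(\mathbf{v})$ is a Type~$\III$ code with $C \cap N_C(\mathbf{v}) = C_0(\mathbf{v})$, hence a Type~$\III$ $k$-neighbor of~$C$. Conversely, any $k$-neighbor $C'$ of $C$ arises this way: letting $D := C \cap C'$, any $(v_1, \dots, v_k) \in (C')^k$ whose images form an ordered basis of $C'/D$ satisfies all the required conditions and yields $N_C(\mathbf{v}) = C'$, since self-orthogonality and pairwise orthogonality are automatic inside the Type~$\III$ code $C'$.

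Next I would compute the fibre of $\mathbf{v} \mapsto N_C(\mathbf{v})$ above a fixed $k$-neighbor $C'$. Any fibre element lies in $(C')^k$ with its images in $C'/D$ forming an ordered basis, so choosing $v_i$ sequentially in $C'$ while avoiding the $(n/2-k+i-1)$-dimensional subcode $\langle D, v_1, \dots, v_{i-1}\rangle$ yields the fibre size
\[
\prod_{i=1}^{k}\bigl(3^{n/2} - 3^{n/2 - k + i - 1}\bigr) = \prod_{j=0}^{k-1}\bigl(3^{n/2} - 3^{n/2 - 1 - j}\bigr),
\]
which is the denominator of the claimed formula. The key point is that this number is independent of the chosen $k$-neighbor.

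For $|\mathcal{T}_k|$, I would choose the $v_i$ sequentially. Let $V_i := \langle v_1, \dots, v_{i-1}\rangle^\perp$ and $R_i := \langle v_1, \dots, v_{i-1}\rangle$; the radical of the induced form on $V_i$ is exactly $R_i$, and the image of $C \cap V_i$ in the non-degenerate quotient $V_i / R_i$ is a half-dimensional isotropic subspace, so $V_i/R_i$ is hyperbolic of maximal Witt index in dimension $n - 2i + 2$. A Gauss-sum computation extending Lemma~\ref{Lem:NumSelfOrthoVecTypeIII} shows that a $2m$-dimensional hyperbolic $\FF_3$-quadratic space contains $3^{2m-1} + 2\cdot 3^{m-1}$ isotropic vectors, so $V_i$ contains $3^{n-i} + 2\cdot 3^{n/2-1}$ self-orthogonal vectors. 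The admissible $v_i$ must lie outside the $n/2$-dimensional Type~$\III$ code $\langle C_0(v_1,\dots,v_{i-1}), v_1, \dots, v_{i-1}\rangle \subset V_i$ built at the previous step, all of whose $3^{n/2}$ elements are self-orthogonal, leaving $3^{n-i} - 3^{n/2-1}$ valid choices for $v_i$. Multiplying over $i = 1, \dots, k$ gives $|\mathcal{T}_k| = \prod_{i=0}^{k-1}(3^{n-1-i} - 3^{n/2-1})$, and dividing by the fibre size produces the stated formula.

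The main obstacle is the count inside $V_i$. The two technical pieces are recognizing $V_i/R_i$ as hyperbolic — handled via the Lagrangian coming from $C \cap V_i$ — and establishing the isotropy count for a hyperbolic $\FF_3$-quadratic space of arbitrary even dimension, which is a direct abstract extension of the character-sum argument in Lemma~\ref{Lem:NumSelfOrthoVecTypeIII} that is there applied to the concrete form $\sum u_i^2$.
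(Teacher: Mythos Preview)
Your proposal is correct and follows essentially the same double-counting strategy as the paper: enumerate ordered tuples of mutually orthogonal self-orthogonal vectors that are independent modulo $C$, then divide by the number of such tuples producing a fixed $k$-neighbor. Your version is considerably more rigorous than the paper's, which simply asserts that each successive orthogonality constraint ``reduces the number of available self-orthogonal vectors in ambient space by $\frac{1}{3}$''; you justify this step by recognizing $V_i/R_i$ as a hyperbolic quadratic space containing a Lagrangian coming from $C$ and invoking the standard isotropy count, which is exactly the content hidden behind the paper's one-line claim.
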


\begin{proof}
	Let $C$ be a Type~$\III$ code of length~$n$.
	Then
	\[
	L_{k}^{\III}(n)
	=
	\#
	\{
	D \in V_{\III}(n)
	\mid
	\text{$D$ is a $k$-neighbor of~$C$}
	\}.
	\]
	Let $S_{k}(n)$ be the set of~$k$ different self-orthogonal vectors
	$v_{1}, v_{2},\ldots, v_{k} \in \FF_{3}^{n}$
	and not in~$C$
	such that each~$v_{j}$ are orthogonal 
	to~$v_{1}, v_{2},\ldots, v_{j-1}$.
	Then
	\[
	L_{k}^{\III}(n)
	=
	\frac{\# S_{k}(n)}{\# \text{ways each neighbor of~$C$ is generated}}.
	\]
	By Lemma~\ref{Lem:NumSelfOrthoVecTypeIII}, we have the number of self-orthogonal vectors in~$\FF_{3}^{n}$ that are not in~$C$ is
	$3^{n-1} - 3^{\frac{n}{2}-1}$.
	Moreover, each choice of a self-orthogonal vector~$v_{j}$
	reduces the number of available self-orthogonal vectors 
	in ambient space by~$\frac{1}{3}$,
	since it must be orthogonal to the previous~$v_{j}$
	and its weight is multiple of~$3$.
	This provides that the number of choices for the vectors is
	$\prod_{i=0}^{k-1}(3^{n-1-i} - 3^{\frac{n}{2}-1})$
	the number of choices for self-orthogonal vectors.
	By using Lemma~\ref{Lem:NumEqualNeighbor} recursively,
	we can have~$\prod_{j = 0}^{k-1}(3^{\frac{n}{2}}-3^{\frac{n}{2}-1-j})$
	the number of ways each neighbor of~$C$ is generated.
	Hence
	\[
	L_{k}^{\III}(n)
	=
	\dfrac{\prod_{i=0}^{k - 1} (3^{n-1-i}-3^{\frac{n}{2}-1})}{\prod_{j = 0}^{k-1}(3^{\frac{n}{2}}-3^{\frac{n}{2}-1-j})}.
	\]
	This completes the proof.
\end{proof}

\begin{rem}
	Taking $k = 1$ in the above theorem, we have
	\[
	L_{1}^{\III}(n)
	=
	\frac{3^{n-1}-3^{\frac{n}{2}-1}}{3^{\frac{n}{2}}-3^{\frac{n}{2}-1}}
	=
	\frac{3^{n}-3^{\frac{n}{2}}}{2.3^{\frac{n}{2}}}
	=
	\frac{3^{\frac{n}{2}}-1}{2}.
	\]
	This gives the number of $1$-neighbors of~$C$ 
	as presented in Theorem~\ref{Thm:GrDegTypeIII}
\end{rem}

\begin{ex}
	Let $n = 4$. Then $T_{\III}(n) = \prod_{i=0}^{\frac{n}{2}-1} (3^{i}+1) = 8$.
	Moreover, $\deg(\Gamma_{\III}(n)) = \frac{1}{2}(3^{\frac{n}{2}}-1) =  4$.
	This implies the graph~$\Gamma_{\III}(n)$ has~$8$ vertices and is regular with degree~$4$. By Remark~\ref{Rem:ZeroNeighborTypeIII}, 
	we have $L_{k}^{\III}(n) = 1$ for $k = 0$. 
	By Theorem~\ref{Thm:k-neighbors},
	we have the following $k$-neighbors of $\Gamma_{\III}(n)$.
	
	\begin{tabular}{ll}
		For $k =1$:
		&
		$L_{k}^{\III}(n)
		=
		\dfrac{3^{3}-3}{3^2-3}
		= 
		4.$\\
		For $k =2$:
		&
		$L_{k}^{\III}(n)
		=
		\dfrac{(3^{3}-3)(3^2-3)}{(3^2-3)(3^2-1)}
		= 
		3.$
	\end{tabular}
	
	\noindent
	Then $1+4+3 = 8$ which is the total number of Type~$\III$ codes.
\end{ex}

The observation in the above example concludes the following result.

\begin{thm}\label{Thm:NumSlfDualkNeighbor}
	Let~$n \equiv 0\pmod 4$.
	Then the number of Type~$\III$ codes of length~$n$
	is $\sum_{k = 0}^{\frac{n}{2}} L_{k}^{\III}(n)$.
\end{thm}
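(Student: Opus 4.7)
The plan is to fix a Type~$\III$ code $C\in V_{\III}(n)$ and partition $V_{\III}(n)$ according to how $C$ sits with respect to each other Type~$\III$ code, then read off the sum by counting each block via the already-proved formula for $L_k^{\III}(n)$.

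First I would observe that for any Type~$\III$ code $D$ of length~$n$, the intersection $C\cap D$ is a linear subcode of both $C$ and $D$, so its dimension lies in $\{0,1,\dots,\tfrac{n}{2}\}$. Setting $k(D):=\tfrac{n}{2}-\dim(C\cap D)$, we obtain a well-defined integer $k(D)\in\{0,1,\dots,\tfrac{n}{2}\}$, and by Definition~1.2 the code $D$ is exactly a $k(D)$-neighbor of $C$. The assignment $D\mapsto k(D)$ therefore partitions $V_{\III}(n)$ into the blocks
\[
	N_{k}(C):=\{D\in V_{\III}(n)\mid \dim(C\cap D)=\tfrac{n}{2}-k\},\qquad k=0,1,\dots,\tfrac{n}{2}.
\]
The block $N_0(C)$ consists only of $C$ itself (by Remark~4.8), and for $k\ge 1$ the block $N_k(C)$ has cardinality $L_k^{\III}(n)$ by the very definition of that quantity.

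Next I would verify that the partition is genuinely exhaustive, i.e.\ that every value of $k$ occurring in $V_{\III}(n)$ is already captured by Theorem~4.9. For $k=0$ this is Remark~4.8. For $1\le k\le \tfrac{n}{2}$ I would appeal to the connectedness result (Theorem~4.6) together with the iterated-neighbor construction used in the proofs of Theorems~4.3 and~4.9: any $D\in N_k(C)$ is obtained from $C$ by iteratively attaching $k$ mutually orthogonal self-orthogonal vectors $v_1,\dots,v_k\notin C$, and conversely any such choice produces an element of $N_k(C)$. This is precisely the setup that Theorem~4.9 enumerates, so $|N_k(C)|=L_k^{\III}(n)$ in all cases.

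Finally, summing over the disjoint blocks gives
\[
	|V_{\III}(n)|=\sum_{k=0}^{n/2}|N_{k}(C)|=\sum_{k=0}^{n/2}L_{k}^{\III}(n),
\]
and by Theorem~4.7 the left-hand side equals $T_{\III}(n)=\prod_{i=0}^{n/2-1}(3^{i}+1)$, which finishes the argument. The only real obstacle I foresee is the bookkeeping check that $N_k(C)$ really equals the set counted by $L_k^{\III}(n)$; this boils down to the observation that the iterative construction $D_j:=N_{D_{j-1}}(v_j)$ drops the dimension of intersection with $C$ by exactly one at each step, which follows directly from Lemma~2.4 applied along the chain $C=D_0,D_1,\dots,D_k=D$.
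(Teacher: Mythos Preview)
Your proposal is correct and follows essentially the same approach as the paper: both arguments fix a reference code $C$, partition $V_{\III}(n)$ into the $k$-neighbor classes of $C$ for $k=0,\dots,\tfrac{n}{2}$, and sum their cardinalities $L_k^{\III}(n)$. The only cosmetic difference is that the paper bounds the range of $k$ by invoking the maximum-path-length result (Theorem~\ref{Thm:MaxDistanceTypeIII}), whereas you obtain $k(D)\in\{0,\dots,\tfrac{n}{2}\}$ directly from $\dim(C\cap D)$ via Definition~1.2; your extra paragraph verifying that $|N_k(C)|=L_k^{\III}(n)$ is unnecessary since this is the definition of $L_k^{\III}(n)$.
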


\begin{proof}
	Let $C_{1}$ and $C_{2}$ be any two 
	Type~$\III$ codes of length~$n \equiv 0\pmod 4$.
	If $C_{1}$ and $C_{2}$ are connected by a path in~$\Gamma_{\III}(n)$,
	then by Theorem~\ref{Thm:MaxDistanceTypeIII}, 
	the maximum path length will be~$\frac{n}{2}$.
	This completes the proof. 	
\end{proof}

\begin{ex}
	Let $n = 0\pmod 4$ be the length of the Type~$\III$ codes,
	$|V_{\III}(n)|$ the number of vertices in the graphs~$\Gamma_{\III}(n)$.
	In Table~1
	we listed the $k$-neighbors of~$\Gamma_{\III}(n)$ up to~$n = 12$. 
	Note that in each row $k$ goes from $0$ to $\frac{n}{2}$ and sum
	the sum of the $k$-neighbors in each row is~$|V_{\III}(n)|$ as 
	in Theorem~\ref{Thm:NumSlfDualkNeighbor}. 
	
	\begin{table}[h!]
		\begin{center}
			\label{Tab:kNeighborTypeIII}
			\caption{List of $k$-Neighbors in $\Gamma_{\III}(n)$ up to~$n = 12$}
			\begin{tabular}{|c|c||c|c|c|c|c|c|c|}
				\hline
				\multirow{2}{*}{$n$} & \multirow{2}{*}{$|V_{\III}(n)|$} & \multicolumn{7}{c|}{$k$-neighbors}\\ \cline{3-9}
				 &  & 0 & 1 & 2 & 3 & 4 & 5 & 6 \\
				 \hline
				 4 & 8 & 1 & 4 & 3 & 0 & 0 & 0 & 0\\
				 8 & 2240 & 1 & 40 & 390 & 1080 & 729 & 0 & 0 \\
				 12 & 44817920 & 1 & 364 & 33033 & 914760 & 8027019 & 21493836 & 14348907 \\
				\hline\hline
			\end{tabular}
		\end{center}
	\end{table}
\end{ex}

\section{Type~$\III$ code with all-ones vector}\label{Sec:TypeIIICodesAllOne}

In this section, we assume the Type~$\III$ codes
that contain the all-ones vector~$\mathbf{1}$.
This additional assumption in Type~$\III$
codes conclude that the length of the code~$n\equiv 0\pmod {12}$.

\begin{lem}\label{Lem:NumSelfOrthoVecTypeIIIAllOne}
	Let $n \equiv 0\pmod {12}$. 
	The number of self-orthogonal vectors in~$\FF_{3}^{n}$
	that are also orthogonal to~$\allone$
	is $3^{n-2} + 3^{\frac{n}{2}} - 3^{\frac{n}{2}-1}$. 
\end{lem}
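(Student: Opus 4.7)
The plan is to mirror the roots-of-unity argument used in Lemma~\ref{Lem:NumSelfOrthoVecTypeIII}, but in two variables rather than one. First I would encode a vector $v \in \FF_{3}^{n}$ by the pair $(a,b)$, where $a$ counts the $1$'s and $b$ counts the $2$'s in $v$. By Lemma~\ref{Lem:SlfOrthoTypeIII}, self-orthogonality of $v$ is equivalent to $a + b \equiv 0 \pmod 3$, while the extra condition $v\cdot \allone = 0$ reads $a + 2b \equiv 0 \pmod 3$, i.e.\ $a - b \equiv 0 \pmod 3$. Together these two congruences are equivalent to $a \equiv 0$ and $b \equiv 0 \pmod 3$ individually.

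Next, I would realise the desired count $N$ as a coefficient sum in the bivariate generating function $(1+x+y)^{n}$, namely $N = \sum \binom{n}{a,b,n-a-b}$ over $a,b \ge 0$ with $a \equiv b \equiv 0 \pmod 3$ and $a+b \le n$. Applying a two-variable roots-of-unity filter with a primitive cube root of unity $\omega$ gives
\[
N \;=\; \frac{1}{9}\sum_{j=0}^{2}\sum_{k=0}^{2}(1+\omega^{j}+\omega^{k})^{n}.
\]

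Then I would evaluate the nine summands. The two terms with $\{j,k\}=\{1,2\}$ vanish because $1+\omega+\omega^{2}=0$. The remaining seven consist of $3^{n}$, two copies each of $(2+\omega)^{n}$ and $(2+\omega^{2})^{n}$, and one copy each of $(1+2\omega)^{n}$ and $(1+2\omega^{2})^{n}$. Using $(2+\omega)(2+\omega^{2})=3$ and $(1+2\omega)(1+2\omega^{2})=3$, both conjugate pairs have modulus $\sqrt{3}$, with arguments $\pm\pi/6$ and $\pm\pi/2$ respectively. The hypothesis $12\mid n$ makes both $\cos(n\pi/6)$ and $\cos(n\pi/2)$ equal to $1$, so each conjugate sum collapses to $2\cdot 3^{n/2}$. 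Substituting yields
\[
N \;=\; \frac{1}{9}\bigl(3^{n} + 4\cdot 3^{n/2} + 2\cdot 3^{n/2}\bigr) \;=\; 3^{n-2} + 2\cdot 3^{n/2-1} \;=\; 3^{n-2} + 3^{n/2} - 3^{n/2-1}.
\]

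The main obstacle is the careful bookkeeping of the nine terms in the filter and the verification that $12\mid n$ is precisely what aligns the two trigonometric contributions at $+1$; for $n$ merely a multiple of $4$ the cosines would oscillate and the count would acquire signed correction terms of order $3^{n/2}$, so the divisibility by $3$ of $n$ (coming from the requirement $\allone\cdot\allone = n \equiv 0 \pmod 3$ needed for $\allone$ to be self-orthogonal) enters the argument exactly here.
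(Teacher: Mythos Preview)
Your argument is correct: the reduction to counting multinomial coefficients with $a\equiv b\equiv 0\pmod 3$, the two-variable roots-of-unity filter, and the evaluation of the nine terms all check out, and the final simplification $3^{n-2}+2\cdot 3^{n/2-1}=3^{n-2}+3^{n/2}-3^{n/2-1}$ is exactly the claimed count.

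Your route, however, is genuinely different from the paper's. The paper does not redo any generating-function computation; it simply quotes Lemma~\ref{Lem:NumSelfOrthoVecTypeIII} for the total number $3^{n-1}+3^{n/2}-3^{n/2-1}$ of self-orthogonal vectors and then asserts that imposing the additional linear condition $\allone\cdot v=0$ ``clearly'' replaces the leading term $3^{n-1}$ by $3^{n-2}$ while leaving the lower-order term $3^{n/2}-3^{n/2-1}$ unchanged. That one-line deduction hides the actual work (why the correction term is unaffected is not explained), whereas your bivariate filter makes the whole count explicit and, as you note, isolates precisely the step at which $12\mid n$ is needed to force both cosines to $+1$. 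So your proof is longer but self-contained and more transparent; the paper's is terse to the point of being a statement rather than a proof.
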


\begin{proof}
	Since $n \equiv 0\pmod {12}$. 
	By Lemma~\ref{Lem:NumSelfOrthoVecTypeIII}, we have 
	the number of self-orthogonal vectors in $\FF_{3}^{n}$ 
	is $3^{n-1} + 3^{\frac{n}{2}} - 3^{\frac{n}{2}-1}$.
	Then clearly the number of a self-orthogonal vector
	that are also orthogonal to~$\allone$ is 
	$3^{n-2} + 3^{\frac{n}{2}} - 3^{\frac{n}{2}-1}$.
\end{proof}

\begin{thm}\label{Thm:NeighborTypeIIIAllOne}
	Let $n \equiv 0\pmod {12}$.
	Let~$C$ be a Type~$\III$ code 
	of length~$n$
	containing all-ones vector.
	Then $N_{C}(v)$ is a Type~$\III$ code containing all-ones 
	vector if and only if~$v \in \FF_{3}^{n}$ is a self-orthogonal vector not in~$C$ such that 
	$\allone \cdot v = 0$.
\end{thm}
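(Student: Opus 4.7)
The plan is to reduce the claim to Theorem~\ref{Thm:NeighborTypeIII}, which already characterizes when $N_{C}(v)$ is Type~$\III$, and then identify the single extra constraint imposed by requiring $\allone \in N_{C}(v)$.

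For the forward implication, I would suppose that $N_{C}(v)$ is a Type~$\III$ code containing $\allone$. Then Theorem~\ref{Thm:NeighborTypeIII} immediately forces $v$ to be self-orthogonal and not in~$C$. Moreover, since $N_{C}(v)$ is self-dual and both $\allone$ and $v$ lie in $N_{C}(v)$, they must be mutually orthogonal, yielding $\allone \cdot v = 0$.

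For the converse, I would assume that $v \in \FF_{3}^{n}$ is self-orthogonal, $v \notin C$, and $\allone \cdot v = 0$. Theorem~\ref{Thm:NeighborTypeIII} again ensures that $N_{C}(v)$ is a Type~$\III$ code. Writing $N_{C}(v) = \langle C_{0}, v\rangle$ with $C_{0} = \{w \in C \mid w \cdot v = 0\}$, the hypotheses $\allone \in C$ and $\allone \cdot v = 0$ together give $\allone \in C_{0} \subseteq N_{C}(v)$, as required.

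I do not foresee a serious obstacle: the whole argument is a short reduction to Theorem~\ref{Thm:NeighborTypeIII} combined with elementary observations about self-duality and membership in $C_{0}$. The congruence $n \equiv 0 \pmod{12}$ enters only implicitly, ensuring that $\allone$ is self-orthogonal (indeed $\allone \cdot \allone = n \equiv 0 \pmod{3}$) and can therefore belong to a Type~$\III$ code at all; the rest of the proof works uniformly.
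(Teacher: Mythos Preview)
Your proposal is correct and follows essentially the same line as the paper's proof; the only difference is that you explicitly invoke Theorem~\ref{Thm:NeighborTypeIII} to obtain the Type~$\III$ property of $N_{C}(v)$, whereas the paper re-derives that part inline via Lemma~\ref{Lem:SlfOrthoTypeIII}. Your forward direction is in fact slightly sharper, since you justify $\allone\cdot v=0$ directly from the self-duality of $N_{C}(v)$ rather than by the paper's ``otherwise'' argument.
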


\begin{proof}
	Suppose $N_{C}(v)$ is a Type~$\III$ code containing~$\bm{1}$.
	Then $v$ must be a self-orthogonal vector such that
	$\allone \cdot v = 0$, 
	otherwise $N_{C}(v)$
	will no longer a Type~$\III$ code containing~$\bm{1}$.
	
	Conversely, suppose that $v \in \FF_{3}^{n}$ is a self-orthogonal vector not in~$C$ such that 
	$\allone \cdot v = 0$.
	This implies $v \in \langle \bm{1} \rangle^{\perp}$.
	Let $C_{0} = \{w \in C \mid w \cdot v = 0\}$.
	Then by Lemma~\ref{Lem:SlfOrthoTypeIII},
	$\wt(w + v) \equiv 0 \pmod 3$.
	Since $C$ is a Type~$\III$ code, 
	therefore $C_{0}$ is a subcode of~$C$ with co-dimension~$1$. 
	This implies that the weight of each vector in~$N_{C}(v)$ 
	is a multiple of~$3$.
	Moreover, $C$ contains~$\bm{1}$ and 
	$\allone \cdot v = 0$.  
	Hence $N_{C}(v)$ is a Type~$\III$ code containing all-ones vector. 
\end{proof}

\begin{thm}\label{Thm:ConnectTypeIIIAllOne}
	Let $n \equiv 0 \pmod {12}$. Let $C$ be a Type~$\III$ code of length~$n$
	containing all-ones vector.
	If $C^{\prime} = N_{C}(v)$ for some self-orthogonal vector~$v$
	such that 
	$\allone \cdot v = 0$, 
	then $C = N_{C^{\prime}}(w)$
	for some self-orthogonal vector~$w$
	such that 
	$\allone \cdot w = 0$
\end{thm}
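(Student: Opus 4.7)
The plan is to mirror the argument used for Theorem~\ref{Thm:ConnectTypeIII}, while tracking the additional constraint of orthogonality to $\allone$. Starting from a self-orthogonal $v \in \FF_{3}^{n}$ with $\allone \cdot v = 0$ and $v \notin C$, I would form the codimension-$1$ subcode $C_{0} := \{u \in C \mid u \cdot v = 0\}$ and write $C = \langle C_{0}, w \rangle$ for some $w \in C \setminus C_{0}$. By Lemma~\ref{Lem:SlfOrthoTypeIII}, $\wt(w) \equiv 0 \pmod{3}$, so $w$ is self-orthogonal, and by construction $w$ is orthogonal to every vector of $C_{0}$.

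The new ingredient compared to Theorem~\ref{Thm:ConnectTypeIII} is verifying $\allone \cdot w = 0$. This comes for free: since $C$ is self-dual and $\allone \in C$, every vector in $C$ (and in particular $w$) is orthogonal to $\allone$. Hence $w$ qualifies as an admissible generator for the all-ones version of the neighbor operation.

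Next I would apply Theorem~\ref{Thm:NeighborTypeIIIAllOne} twice. First, it ensures $C' = N_{C}(v)$ is itself a Type~$\III$ code containing $\allone$. Second, combined with the preceding paragraph, it guarantees that $N_{C'}(w)$ is again a Type~$\III$ code containing $\allone$. Setting $C_{0}' := \{u' \in C' \mid u' \cdot w = 0\}$, this is a codimension-$1$ subcode of $C'$, and because $w$ is orthogonal to every vector of $C_{0}$ and $C_{0} \subseteq C'$, a dimension count yields $C_{0}' = C_{0}$.

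The conclusion is then immediate: $C = \langle C_{0}, w \rangle = \langle C_{0}', w \rangle = N_{C'}(w)$, as required. There is no real obstacle here; the only point requiring care is recording that $\allone \cdot w = 0$ is automatic from self-duality together with $\allone \in C$, so the symmetry of the neighbor relation proved in Theorem~\ref{Thm:ConnectTypeIII} lifts verbatim to the all-ones setting.
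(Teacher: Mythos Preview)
Your proposal is correct and follows essentially the same approach as the paper's proof: form $C_{0}$, write $C = \langle C_{0}, w\rangle$, invoke Theorem~\ref{Thm:NeighborTypeIIIAllOne} to see that $C'$ (and then $N_{C'}(w)$) is Type~$\III$ containing~$\allone$, and conclude by showing $C_{0}' = C_{0}$. In fact you are slightly more explicit than the paper in justifying $\allone \cdot w = 0$, observing that this is automatic from self-duality together with $\allone \in C$; the paper simply asserts this property when introducing~$w$.
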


\begin{proof}
	Let $C$ be a Type~$\III$ code containing~$\bm{1}$
	and $v$ be a self-orthogonal 
	vector not in~$C$ such that 
	$\allone \cdot v = 0$.
	This implies
	$v\in \langle \bm{1} \rangle^{\perp}$.
	Then 
	$C_{0} = \{u \in C \mid u \cdot v = 0\}$
	is a subcode of~$C$ with co-dimension~$1$.
	This implies $C = \langle C_{0}, w \rangle$ 
	for some self-orthogonal vector~$w$ 
	such that 
	$\allone \cdot w = 0$.
	Clearly, $w$ is orthogonal to each vector in~$C_{0}$.
	By Lemma~\ref{Lem:SlfOrthoTypeIII}, we have
	$\wt(v) \equiv 0\pmod 4$ and
	$\wt(w) \equiv 0\pmod 4$.
	Moreover, by~Theorem~\ref{Thm:NeighborTypeIIIAllOne}, 
	$C^{\prime} = N_{C}(v)$ is a Type~$\III$ code
	containing~$\bm{1}$.
	This implies $N_{C^{\prime}}(w)$ is also a Type~$\III$
	code. 
	Let $C_{0}^{\prime} = \{u^{\prime} \in C^{\prime} \mid u^{\prime}\cdot w = 0\}$.
	Then $C_{0}^{\prime}$ is a subcode of~$C^{\prime}$ with co-dimension~$1$.
	Since $w$ is orthogonal to each vector in~$C_{0}$,
	therefore  $C_{0}^{\prime} = C_{0}$.
	Hence $C = \langle C_{0}, w \rangle = \langle C_{0}^{\prime}, w \rangle = N_{C^{\prime}}(w)$.
\end{proof}

\begin{df}
	Let $n \equiv 0\pmod {12}$.
	Let $V_{\III}(n,\allone)$ be the set all Type~$\III$ codes 
	of length~$n$ containing~$\allone$.
	Let $\Gamma_{\III}(n,\allone):=(V_{\III}(n,\allone),E_{\III}(n,\allone))$ 
	be a graph, where any two vertices in~$V_{\III}(n,\allone)$ are connected 
	by an edge in~$E_{\III}(n,\allone)$ if and only if they are neighbors. 
\end{df}

The following theorems gives basic properties of $\Gamma_{\III}(n)$ 
and answers the various counting questions related to it. 

\begin{thm}\label{Thm:NeighborGraphTypeIIIAllOne}
	Let $n \equiv 0\pmod{12}$.
	Then the graph $\Gamma_{\III}(n,\allone)$ is simple and undirected.
\end{thm}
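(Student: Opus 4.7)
The plan is to mirror, almost verbatim, the proof of Theorem~\ref{Thm:NeighborGraphTypeIII}, since the definition of $\Gamma_{\III}(n,\allone)$ differs from that of $\Gamma_{\III}(n)$ only by the extra requirement that each vertex contain~$\allone$, and all the tools needed in the unrestricted case have already been upgraded to the present setting.

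First I would dispose of the simplicity part. Recall that two codes are joined by an edge only when they are neighbors, that is, when they share a subcode of codimension~$1$; in particular, they must be distinct. So no Type~$\III$ code in $V_{\III}(n,\allone)$ can be its own neighbor, which rules out loops. Since the definition of $E_{\III}(n,\allone)$ produces at most one edge per unordered pair of vertices, there are no multiple edges either, so $\Gamma_{\III}(n,\allone)$ is simple.

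Second I would argue symmetry of the edge relation, which is the content of the ``undirected'' conclusion. Suppose $C,C' \in V_{\III}(n,\allone)$ are neighbors, so that $C' = N_{C}(v)$ for some self-orthogonal $v \in \FF_{3}^{n}\setminus C$ with $\allone \cdot v = 0$ (such a $v$ is guaranteed by Theorem~\ref{Thm:NeighborTypeIIIAllOne}, which characterises those $v$ that produce neighbors still lying in $V_{\III}(n,\allone)$). Then Theorem~\ref{Thm:ConnectTypeIIIAllOne} furnishes a self-orthogonal vector $w \in \FF_{3}^{n}$ with $\allone \cdot w = 0$ such that $C = N_{C'}(w)$. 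Hence $C'$ is also a neighbor of $C$ in $V_{\III}(n,\allone)$, and the edge relation is symmetric.

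There is essentially no obstacle here: the two nontrivial inputs, namely the closure of the neighbor operation inside $V_{\III}(n,\allone)$ and its symmetry, have been established as Theorems~\ref{Thm:NeighborTypeIIIAllOne} and~\ref{Thm:ConnectTypeIIIAllOne}, respectively. The only point worth checking carefully is that the symmetric partner $w$ produced by Theorem~\ref{Thm:ConnectTypeIIIAllOne} indeed satisfies $\allone \cdot w = 0$, so that $C$ is recovered as a neighbor \emph{in the restricted graph} and not merely in~$\Gamma_{\III}(n)$; this is exactly what that theorem asserts, so the argument closes cleanly.
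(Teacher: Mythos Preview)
Your proposal is correct and follows essentially the same approach as the paper: rule out loops because a code cannot be its own neighbor, and obtain symmetry of the edge relation directly from Theorem~\ref{Thm:ConnectTypeIIIAllOne}. Your write-up is in fact more careful than the paper's, since you also address multiple edges and explicitly check that the witness $w$ keeps the partner code inside $V_{\III}(n,\allone)$.
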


\begin{proof}
	Since any Type~$\III$ code containing~$\allone$ 
	is not a neighbor of itself, therefore
	$\Gamma_{\III}(n,\allone)$ contains no loop. 
	Moreover, by Theorem~\ref{Thm:ConnectTypeIIIAllOne}, 
	we have if $C$ is connected to~$C^{\prime}$, 
	then $C^{\prime}$ is connected to~$C$.
	This implies $\Gamma_{\III}(n,\allone)$ is not a directed graph.
\end{proof}

\begin{thm}\label{Thm:MaxDistanceTypeIIIAllOne}
	Let $n \equiv 0\pmod{12}$.
	The graph $\Gamma_{\III}(n,\allone)$ is connected 
	with maximum path length $\frac{n}{2}-1$ 
	between two vertices.
\end{thm}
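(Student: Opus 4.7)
The plan is to prove both the connectivity and the diameter claim simultaneously, mirroring the structure of Theorem~\ref{Thm:MaxDistanceTypeIII} but exploiting the fact that every vertex of $\Gamma_{\III}(n,\allone)$ contains $\allone$.

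For the upper bound I would use an intersection-climbing argument. Take any $C_1, C_2 \in V_{\III}(n,\allone)$ and set $d = \dim(C_1 \cap C_2)$; since $\allone \in C_1 \cap C_2$, we have $d \geq 1$. Pick $v \in C_2 \setminus C_1$. Because $v, \allone \in C_2$ and $C_2$ is Type~$\III$, $v$ is self-orthogonal, $\wt(v) \equiv 0 \pmod 3$, and $\allone \cdot v = 0$; hence by Theorem~\ref{Thm:NeighborTypeIIIAllOne}, $N_{C_1}(v) \in V_{\III}(n,\allone)$. Moreover, every $w \in C_1 \cap C_2$ satisfies $w \cdot v = 0$ (since $v \in C_2$ and $C_2$ is self-orthogonal), so $C_1 \cap C_2 \subseteq (C_1)_0 \subseteq N_{C_1}(v)$, while $v \in N_{C_1}(v) \cap C_2$ with $v \notin C_1 \cap C_2$. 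Thus $\dim(N_{C_1}(v) \cap C_2) \geq d + 1$. Iterating, each edge of the path strictly increases the dimension of the intersection with $C_2$ by at least one, so after at most $\tfrac{n}{2} - d \leq \tfrac{n}{2} - 1$ edges we reach $C_2$. This shows $\Gamma_{\III}(n,\allone)$ is connected with diameter at most $\tfrac{n}{2} - 1$.

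For sharpness I would exhibit two codes $C_1', C_2' \in V_{\III}(n,\allone)$ with $C_1' \cap C_2' = \langle \allone \rangle$; the upper-bound argument then forces their distance to equal $\tfrac{n}{2} - 1$ exactly. For the base case $n = 12$ such a pair can be verified by direct computation in Magma or SageMath. For larger $n \equiv 0 \pmod{12}$, the naive direct-sum construction of Theorem~\ref{Thm:MaxDistanceTypeIII} is insufficient: a direct sum of two length-$12$ extremal pairs produces an intersection of dimension equal to the number of blocks, not $1$. A workable alternative is to build $C_2'$ inductively from $C_1'$ by $\tfrac{n}{2} - 1$ successive single-neighbor moves, at each stage choosing the neighbor vector so that the intersection with $C_1'$ drops by exactly one.

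The main obstacle will be the sharpness part: verifying that for every $n$ in the congruence class one can find an extremal pair intersecting only in $\langle \allone \rangle$, since the clean direct-sum trick available for Type~$\III$ codes in general fails once $\allone$ must be preserved in every summand. The upper bound itself is a straightforward adaptation of the intersection-climbing used for Theorem~\ref{Thm:MaxDistanceTypeIII}, losing exactly one step because of the forced common vector $\allone$.
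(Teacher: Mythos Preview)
Your connectivity and upper-bound argument is the paper's argument in slightly sharpened form: the paper walks through the $n/2$ generators of $C_{2}$ to establish connectivity, then separately remarks that every code in $V_{\III}(n,\allone)$ contains $\allone$, so any two share at least a one-dimensional subspace, forcing the distance to be at most $\tfrac{n}{2}-1$. Your intersection-climbing accomplishes both at once and makes the dimension increment explicit, but the underlying idea is identical.

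Where you go beyond the paper is on sharpness. The paper's proof of this theorem simply stops after the upper bound; it offers no construction of a pair $C_{1},C_{2}\in V_{\III}(n,\allone)$ with $C_{1}\cap C_{2}=\langle\allone\rangle$, and gives no analogue of the direct-sum argument used in Theorem~\ref{Thm:MaxDistanceTypeIII}. Your observation that the direct-sum trick fails here is correct: taking $k$ copies of a length-$12$ extremal pair produces an intersection of dimension $k$, not~$1$. So either the paper is tacitly reading ``maximum path length'' as an upper bound only, or there is a genuine omission in its proof. Your proposed remedy (verify $n=12$ directly, then inductively peel off one dimension of intersection per neighbor move) is more than the paper provides; the one point you would still need to justify is that at every stage a self-orthogonal $v\in\langle\allone\rangle^{\perp}$ exists outside the current code whose associated neighbor strictly lowers $\dim(\,\cdot\,\cap C_{1}')$.
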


\begin{proof}
	Let $C_{1}$ and $C_{2}$ be two Type~$\III$ codes 
	of length~$n \equiv 0\pmod{12}$ containing~$\allone$.
	Let $C_{2} = \langle v_{1},\ldots, v_{\frac{n}{2}}\rangle$.
	Then each~$v_{i}$ is a self-orthogonal vector such that
	$\allone \cdot v_{i} = \0$.
	This implies each
	$\bm{v_{i}}\in \langle \bm{1} \rangle^{\perp}$.
	Let $D_{1} = N_{C_{1}}(v_{1})$ 
	and $D_{i} = N_{D_{i-1}}(v_{i})$
	for $i = 2,\ldots, \frac{n}{2}$.
	Then $C_{1}$, $D_{1}$, $D_{2}$, $\ldots$, $D_{\frac{n}{2}}$
	is the path from $C_{1}$ to $C_{2}$.
	Hence the graph $\Gamma_{\III}(n,\allone)$ is connected. 
	Since each Type~$\III$ code in $\Gamma_{\III}(n,\allone)$
	contains~$\allone$, 
	they must have a subspace of dimension~$1$ 
	in common. 
	This implies that the maximum distance in 
	the graph $\Gamma_{\III}(n,\allone)$ is $\frac{n}{2}-1$.
\end{proof}

\begin{thm}\label{Thm:GrVertexNumTypeIIIAllOne}
	Let $n \equiv 0\pmod{12}$.
	Then the number of vertices in $\Gamma_{\III}(n,\allone)$ is
	$2\prod_{i=1}^{\frac{n}{2} - 2}(3^{i}+1)$.
\end{thm}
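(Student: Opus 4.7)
The plan is to count $|V_{\III}(n,\allone)|$ via a quotient construction, paralleling the way Theorem~\ref{Thm:GrVertexNumTypeIII} reduces to equation~(\ref{Equ:NumTypeIIICodes}). Since $n\equiv 0\pmod{12}$, we have $\allone\cdot\allone=n\equiv 0\pmod 3$, so $\allone$ is self-orthogonal and $\langle\allone\rangle$ is a $1$-dimensional isotropic subspace of $\FF_{3}^{n}$. Recall that for $n\equiv 0\pmod 4$, Lemma~\ref{Lem:SlfOrthoTypeIII} implies every self-dual ternary code is automatically Type~$\III$, so throughout the argument ``Type~$\III$'' may be identified with ``self-dual''.

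For $C\in V_{\III}(n,\allone)$, the chain $\langle\allone\rangle\subseteq C=C^{\perp}\subseteq\langle\allone\rangle^{\perp}$ shows that $\overline{C}:=C/\langle\allone\rangle$ is a Lagrangian (maximal isotropic) subspace of dimension $n/2-1$ in the quotient space $W:=\langle\allone\rangle^{\perp}/\langle\allone\rangle$, which has dimension $n-2$ and inherits a non-degenerate symmetric bilinear form from~$\FF_{3}^{n}$. The map $C\mapsto\overline{C}$ is a bijection between $V_{\III}(n,\allone)$ and the set of Lagrangian subspaces of~$W$, with inverse given by pulling back under the quotient map. I would then invoke the classical mass formula, which states that the number of Lagrangian subspaces of a split non-degenerate symmetric bilinear space of dimension $2m$ over $\FF_{q}$ is $\prod_{i=0}^{m-1}(q^{i}+1)$; taking $q=3$ and $2m=n-2$ yields
\[
\prod_{i=0}^{n/2-2}(3^{i}+1)=2\prod_{i=1}^{n/2-2}(3^{i}+1),
\]
using $3^{0}+1=2$, as claimed.

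The main obstacle is verifying that the induced form on $W$ is of split (hyperbolic) type so that the mass formula applies. This follows because the standard form on $\FF_{3}^{n}$ is already split for $n\equiv 0\pmod 4$ (it admits maximal isotropic subspaces of dimension $n/2$, namely the existing Type~$\III$ codes), and quotienting by a $1$-dimensional isotropic subspace drops the Witt index by exactly~$1$, preserving splitness. An alternative route avoids the quotient entirely: one double-counts incidence pairs $(C,v)$ with $C$ Type~$\III$ and $v\in C$ a nonzero self-orthogonal vector, using the Witt-transitivity of $O_{n}(\FF_{3})$ on nonzero isotropic vectors together with Lemma~\ref{Lem:NumSelfOrthoVecTypeIII} and equation~(\ref{Equ:NumTypeIIICodes}) to extract $|V_{\III}(n,\allone)|$ from $T_{\III}(n)$; the ratio simplifies to $1/(3^{n/2-1}+1)$, matching the claim.
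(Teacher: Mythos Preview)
Your proof is correct and is considerably more substantive than the paper's own proof, which consists of a single sentence citing the formula from Pless--Pierce~\cite{PP1973} as a known fact. Your quotient argument---passing to $W=\langle\allone\rangle^{\perp}/\langle\allone\rangle$, identifying Type~$\III$ codes containing $\allone$ with Lagrangians in~$W$, verifying splitness via the drop in Witt index, and then invoking the standard count $\prod_{i=0}^{m-1}(q^{i}+1)$---is a clean, self-contained derivation of exactly that cited result. The alternative double-counting route you sketch is also valid: the identity $(3^{n/2}-1)(3^{n/2-1}+1)=3^{n-1}+2\cdot 3^{n/2-1}-1$ confirms that the ratio of $T_{\III}(n)\cdot(3^{n/2}-1)$ to the number of nonzero isotropic vectors indeed simplifies to $T_{\III}(n)/(3^{n/2-1}+1)$. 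Either way, you have supplied a proof where the paper only supplies a reference; what the paper's approach buys is brevity, while yours buys independence from the external source and illuminates \emph{why} the factor $(3^{n/2-1}+1)$ is removed when one restricts to codes containing~$\allone$.
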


\begin{proof}
	We know that the number of Type~$\III$ codes of 
	length~$n \equiv 0 \pmod {12}$ containing~$\allone$
	is $2\prod_{i=1}^{\frac{n}{2} - 2}(3^{i}+1)$,
	see~\cite{PP1973}.
	This completes the proof.
\end{proof}

\begin{lem}\label{Lem:NumEqualNeighborTypeIIIAllOne}
	Let $n \equiv 0 \pmod {12}$. 
	Let~$C$ be a Type~$\III$ code of length~$n$
	containing~$\bm{1}$.
	Suppose~$v_{0} \in \FF_{3}^{n}$ 
	be a self-orthogonal vector not in~$C$
	such that~$\allone \cdot v_{0} = \0$.
	Then the number of self-orthogonal vectors~$v \in \FF_{3}^{n}$
	such that $N_{C}(v) = N_{C}(v_{0})$
	is $2.3^{\frac{n}{2}-1}$.
\end{lem}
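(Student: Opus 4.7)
The plan is to invoke Lemma~\ref{Lem:EqualNeighbors} to reduce the problem to counting pairs $(w,\alpha)$, and then show this count is unchanged from the setting of Lemma~\ref{Lem:NumEqualNeighborTypeIII} despite the extra hypothesis $\allone \cdot v_{0} = 0$. Concretely, by Lemma~\ref{Lem:EqualNeighbors} a self-orthogonal vector $v\in\FF_{3}^{n}$ satisfies $N_{C}(v)=N_{C}(v_{0})$ if and only if $v = w + \alpha v_{0}$ for some $w\in C_{0} := \{w\in C \mid w\cdot v_{0}=0\}$ and some $\alpha\in\FF_{3}^{\times}$. So the whole task is to determine $|C_{0}|$ and to confirm that each pair $(w,\alpha)$ gives a distinct self-orthogonal $v$.

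Next, I would compute $|C_{0}| = 3^{\frac{n}{2}-1}$ by the usual linear functional argument: the map $C \to \FF_{3}$, $w\mapsto w\cdot v_{0}$, is $\FF_{3}$-linear, and it cannot be identically zero, for otherwise $v_{0}\in C^{\perp}=C$, contradicting $v_{0}\notin C$; hence its kernel $C_{0}$ has codimension $1$ in $C$. Multiplying by the two choices $\alpha\in\{1,2\}$ yields $2\cdot 3^{\frac{n}{2}-1}$ candidate vectors $v$. Distinctness is immediate: if $w_{1}+\alpha_{1}v_{0} = w_{2}+\alpha_{2}v_{0}$, then $(\alpha_{2}-\alpha_{1})v_{0}\in C$, forcing $\alpha_{1}=\alpha_{2}$ (else $v_{0}\in C$) and then $w_{1}=w_{2}$.

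It then remains to check self-orthogonality of each such $v$, which is routine: since $w\in C$ (self-dual, hence self-orthogonal codewords) and $v_{0}$ is self-orthogonal with $w\cdot v_{0}=0$, we get
\[
	(w+\alpha v_{0})\cdot(w+\alpha v_{0})
	= w\cdot w + 2\alpha\, w\cdot v_{0} + \alpha^{2}\, v_{0}\cdot v_{0}
	= 0.
\]
This accounts for all self-orthogonal vectors with the required property. The extra assumption $\allone\cdot v_{0}=0$ plays no role in the count itself; it only guarantees, via $\allone\in C$ and $\allone\cdot w=0$, that the resulting $v$ automatically satisfies $\allone\cdot v = 0$, which ensures compatibility with Theorem~\ref{Thm:NeighborTypeIIIAllOne} but does not further restrict $v$.

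There is no real obstacle here; the result is essentially a direct specialization of Lemma~\ref{Lem:NumEqualNeighbor} together with Lemma~\ref{Lem:SlfOrthoTypeIII}. The only conceptual point worth flagging is precisely the observation in the previous paragraph: one must verify that adding the hypothesis $\allone\cdot v_{0}=0$ neither enlarges nor shrinks the set of $v$ with $N_{C}(v)=N_{C}(v_{0})$, so that the formula $2\cdot 3^{\frac{n}{2}-1}$ from Lemma~\ref{Lem:NumEqualNeighborTypeIII} transfers verbatim to the all-ones setting.
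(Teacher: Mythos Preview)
Your proposal is correct and follows essentially the same approach as the paper, which simply cites Lemmas~\ref{Lem:SlfOrthoTypeIII} and~\ref{Lem:EqualNeighbors} and leaves the count implicit. You have merely unpacked those citations: using Lemma~\ref{Lem:EqualNeighbors} to parametrize the relevant $v$ by pairs $(w,\alpha)\in C_{0}\times\FF_{3}^{\times}$, computing $|C_{0}|=3^{n/2-1}$, and verifying distinctness and self-orthogonality---exactly the content behind the paper's one-line proof (and behind Lemma~\ref{Lem:NumEqualNeighbor}, which already gives $(q-1)q^{n/2-1}$ in general).
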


\begin{proof}
	By Lemmas~\ref{Lem:SlfOrthoTypeIII}
	and \ref{Lem:EqualNeighbors},
	we can obtain the result.
\end{proof}

\begin{thm}\label{Thm:GrDegTypeIIIAllOne}
	Let $n \equiv 0\pmod {12}$.
	Then the graph~$\Gamma_{\III}(n,\allone)$ is regular with degree~$\dfrac{1}{2}(3^{\frac{n}{2}-1} - 1)$.	
\end{thm}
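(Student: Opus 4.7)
The plan is to mirror the argument used for Theorem~\ref{Thm:GrDegTypeIII}, but restricted to the smaller ambient space of vectors orthogonal to $\allone$. First, I would fix an arbitrary vertex $C \in V_{\III}(n,\allone)$ and count the number of self-orthogonal vectors $v \in \FF_{3}^{n}$ with $\allone \cdot v = 0$ and $v \notin C$, since by Theorem~\ref{Thm:NeighborTypeIIIAllOne} these are exactly the vectors $v$ producing a neighbor $N_{C}(v)$ in $\Gamma_{\III}(n,\allone)$.

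By Lemma~\ref{Lem:NumSelfOrthoVecTypeIIIAllOne}, the total number of self-orthogonal vectors in $\FF_{3}^{n}$ orthogonal to $\allone$ is $3^{n-2} + 3^{\frac{n}{2}} - 3^{\frac{n}{2}-1}$. Since $C$ is Type~$\III$ of length $n \equiv 0\pmod 4$, Lemma~\ref{Lem:SlfOrthoTypeIII} tells us every codeword of $C$ is self-orthogonal, and since $\allone \in C$ and $C = C^{\perp}$, every codeword is orthogonal to $\allone$. Hence all $|C| = 3^{\frac{n}{2}}$ codewords lie in the set counted above, so the number of admissible vectors $v$ is
\[
\bigl(3^{n-2} + 3^{\frac{n}{2}} - 3^{\frac{n}{2}-1}\bigr) - 3^{\frac{n}{2}} = 3^{n-2} - 3^{\frac{n}{2}-1}.
\]

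Next, I would invoke Lemma~\ref{Lem:NumEqualNeighborTypeIIIAllOne}, which states that for any fixed self-orthogonal $v_{0}$ orthogonal to $\allone$ and not in $C$, exactly $2 \cdot 3^{\frac{n}{2}-1}$ self-orthogonal vectors $v$ produce the same neighbor $N_{C}(v) = N_{C}(v_{0})$. This partitions the set of admissible vectors into equivalence classes of equal size, each corresponding to one neighbor of $C$ in $\Gamma_{\III}(n,\allone)$. Therefore
\[
\deg_{\Gamma_{\III}(n,\allone)}(C) = \frac{3^{n-2} - 3^{\frac{n}{2}-1}}{2 \cdot 3^{\frac{n}{2}-1}} = \frac{3^{\frac{n}{2}-1} - 1}{2},
\]
which is independent of the choice of $C$, yielding regularity.

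The only non-routine step is confirming that the subtraction $-3^{\frac{n}{2}}$ is correct, i.e.\ that \emph{every} codeword of $C$ is both self-orthogonal and orthogonal to $\allone$; but this follows immediately from $C$ being self-dual and Type~$\III$ with $\allone \in C$. I do not anticipate a genuine obstacle; the argument is a direct analog of the proof of Theorem~\ref{Thm:GrDegTypeIII}, with the refinement that the ambient space is the hyperplane $\langle \allone \rangle^{\perp}$ rather than all of $\FF_{3}^{n}$.
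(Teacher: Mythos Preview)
Your proposal is correct and follows essentially the same approach as the paper's proof: count admissible self-orthogonal vectors orthogonal to $\allone$ and not in $C$ via Lemma~\ref{Lem:NumSelfOrthoVecTypeIIIAllOne}, then divide by the multiplicity $2\cdot 3^{\frac{n}{2}-1}$ from Lemma~\ref{Lem:NumEqualNeighborTypeIIIAllOne}. If anything, your version is slightly more explicit in justifying the subtraction of $|C| = 3^{\frac{n}{2}}$ (the paper states the count $3^{n-2}-3^{\frac{n}{2}-1}$ directly without spelling this out).
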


\begin{proof}
	Let $C$ be a Type~$\III$ code containing~$\allone$.
	Then by Lemma~\ref{Lem:NumSelfOrthoVecTypeIIIAllOne}, 
	we have the number of self-orthogonal vectors in~$\FF_{3}^{n}$ 
	that are orthogonal to $\allone$ but not in~$C$ is $3^{n-2}-3^{\frac{n}{2}-1}$.
	Moreover, by Lemma~\ref{Lem:NumEqualNeighborTypeIIIAllOne}, 
	each Type~$\III$ code of length~$n$ occurs
	$2.3^{\frac{n}{2}-1}$ times. 
	Hence the degree of each vertex~$v$ in $\Gamma_{\III}(n,\allone)$
	is
	\begin{align*}
		\deg_{\Gamma_{\III}(n,\allone)}(v)
		=
		\frac{3^{n-2}-3^{\frac{n}{2}-1}}{2.3^{\frac{n}{2}-1}}
		=
		\frac{3^{\frac{n}{2}-1}-1}{2}.
	\end{align*}
\end{proof}

\begin{thm}\label{Thm:GrNeighborEdgeNumAllOne}
	Let $n \equiv 0\pmod {12}$.
	Then the number of edges in the graph~$\Gamma_{\III}(n,\allone)$ is
	$$\dfrac{1}{2}\left( 
	\prod_{i=1}^{\frac{n}{2} - 2}
	(3^{i}+1)
	\right)
	(3^{\frac{n}{2}-1} - 1)$$.
\end{thm}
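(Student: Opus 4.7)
The plan is to invoke the handshaking lemma combined with the regularity of $\Gamma_{\III}(n,\allone)$ established in Theorem~\ref{Thm:GrDegTypeIIIAllOne}. Since every vertex has the same degree $d=\frac{1}{2}(3^{\frac{n}{2}-1}-1)$, the number of edges satisfies
\[
|E_{\III}(n,\allone)| \;=\; \frac{1}{2}\,|V_{\III}(n,\allone)|\cdot d,
\]
so the result is obtained by substituting the two previously computed quantities.

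First, I would cite Theorem~\ref{Thm:GrVertexNumTypeIIIAllOne} to obtain $|V_{\III}(n,\allone)|=2\prod_{i=1}^{\frac{n}{2}-2}(3^{i}+1)$, and Theorem~\ref{Thm:GrDegTypeIIIAllOne} to obtain that $\Gamma_{\III}(n,\allone)$ is regular of degree $\frac{1}{2}(3^{\frac{n}{2}-1}-1)$. Next, using the fact that the graph is simple and undirected (Theorem~\ref{Thm:NeighborGraphTypeIIIAllOne}), the handshaking lemma applies without modification, giving
\[
2|E_{\III}(n,\allone)| \;=\; \left(2\prod_{i=1}^{\frac{n}{2}-2}(3^{i}+1)\right)\cdot\frac{1}{2}\bigl(3^{\frac{n}{2}-1}-1\bigr).
\]
Dividing by $2$ yields the claimed formula.

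This mirrors exactly the proof of Theorem~\ref{Thm:GrNeighborEdgeNum} for the unrestricted Type~$\III$ case, with the all-ones constraint entering only through the smaller vertex count and the reduced degree; there is no genuine obstacle beyond a clean substitution. The only care needed is bookkeeping of the product indices (running up to $\frac{n}{2}-2$ instead of $\frac{n}{2}-1$) and ensuring the factor of $2$ in the vertex count cancels the factor of $\frac{1}{2}$ from the degree, so that a single $\frac{1}{2}$ survives in the final expression.
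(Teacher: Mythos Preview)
Your proposal is correct and follows essentially the same approach as the paper's own proof: cite the vertex count from Theorem~\ref{Thm:GrVertexNumTypeIIIAllOne}, the regularity and degree from Theorem~\ref{Thm:GrDegTypeIIIAllOne}, apply the handshaking lemma, and simplify. The only difference is that you explicitly invoke Theorem~\ref{Thm:NeighborGraphTypeIIIAllOne} to justify applying the handshaking lemma, which the paper leaves implicit.
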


\begin{proof}
	By Theorem~\ref{Thm:GrVertexNumTypeIIIAllOne}, the number of vertex
	in the graph~$\Gamma_{\III}(n,\allone)$ 
	is~$2\prod_{i=1}^{\frac{n}{2} - 2}(3^{i}+1)$.
	Since the graph is regular with degree~$\frac{1}{2}(3^{\frac{n}{2}-1}-1)$. 
	Therefore,
	\[
	2|E_{\III}(n,\allone)|
	=
	\left(
	2 
	\prod_{i=1}^{\frac{n}{2} - 2}
	(3^{i}+1)
	\right)
	\frac{1}{2}(3^{\frac{n}{2}-1} - 1).
	\]
	This gives the result.
\end{proof}


\begin{rem}\label{Rem:ZeroNeighborTypeIIIAllOne}
	Every Type~$\III$ code in~$V_{\III}(n,\allone)$ is its $0$-neighbor.
\end{rem}

Let $C$ be a Type~$\III$ code of length~$n \equiv 0 \pmod {12}$
containing~$\allone$. 
For any non-negative integer~$k$, 
we denote the number of Type~$\III$ $k$-neighbors of~$C$ 
by $L_{k}^{\III}(n,\allone)$
in $\Gamma_{\III}(n,\allone)$. 
By Remark~\ref{Rem:ZeroNeighborTypeIIIAllOne},
we have $L_{0}^{\III}(n,\allone) = 1$.
Now the following theorem gives 
$L_{k}^{\III}(n,\allone)$ 
for $k > 0$.

\begin{thm}\label{Thm:k-neighborsAllOne}
	Let $n \equiv 0 \pmod {12}$. 
	Let $C$ be a Type~$\III$ code of length~$n$
	containing~$\allone$. 
	Then for $k > 0$, we have
	\[
	L_{k}^{\III}(n,\allone)
	=
	\dfrac{\prod_{i=0}^{k - 1} (3^{n-2-i}-3^{\frac{n}{2}-1})}{\prod_{j = 0}^{k-1}(3^{\frac{n}{2}}-3^{\frac{n}{2}-1-j})}.
	\]
\end{thm}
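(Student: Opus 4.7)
The plan is to mirror the proof of Theorem~\ref{Thm:k-neighbors} almost verbatim, carrying the additional constraint ``orthogonal to $\allone$'' through each step. The key observation is that Lemma~\ref{Lem:NumSelfOrthoVecTypeIIIAllOne} replaces Lemma~\ref{Lem:NumSelfOrthoVecTypeIII} in the numerator, while the denominator (how many self-orthogonal vectors produce the same neighbor) is unchanged, since Lemma~\ref{Lem:NumEqualNeighborTypeIIIAllOne} gives the same count $2\cdot 3^{n/2-1}$ as in the general Type~$\III$ case.

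First I would write
\[
L_{k}^{\III}(n,\allone)
= \#\{D \in V_{\III}(n,\allone) \mid D \text{ is a $k$-neighbor of } C\}
= \frac{\#S_{k}(n,\allone)}{\#\text{ways each $k$-neighbor is generated}},
\]
where $S_k(n,\allone)$ is the set of ordered $k$-tuples $(v_1,\ldots,v_k)$ of self-orthogonal vectors in $\FF_3^n$, not in $C$, each orthogonal to $\allone$, with $v_j\cdot v_i = 0$ for $i<j$. By Theorem~\ref{Thm:NeighborTypeIIIAllOne} every such tuple produces a Type~$\III$ $k$-neighbor containing $\allone$, and conversely every $k$-neighbor arises this way.

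Next I would count $\#S_k(n,\allone)$. The first vector $v_1$ is chosen among the self-orthogonal vectors of $\FF_3^n$ orthogonal to $\allone$ but not in $C$, giving $3^{n-2}-3^{\frac{n}{2}-1}$ options by Lemma~\ref{Lem:NumSelfOrthoVecTypeIIIAllOne} (after subtracting the $3^{\frac{n}{2}}$ codewords of $C$, all of which lie in $\langle\allone\rangle^\perp$ since $\allone\in C=C^\perp$). Each subsequent choice $v_j$ must additionally be orthogonal to $v_1,\ldots,v_{j-1}$; this imposes one independent linear condition over $\FF_3$ at each step, cutting the number of available self-orthogonal vectors by a factor of~$3$ (exactly as argued for Theorem~\ref{Thm:k-neighbors}). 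This yields $\#S_k(n,\allone) = \prod_{i=0}^{k-1}(3^{n-2-i}-3^{\frac{n}{2}-1})$.

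Then I would compute the denominator by applying Lemma~\ref{Lem:NumEqualNeighborTypeIIIAllOne} recursively: passing from $C$ to $N_C(v_1)$ collapses $3^{\frac{n}{2}}-3^{\frac{n}{2}-1}$ choices of $v_1$ to one neighbor; passing from $N_C(v_1)$ to its neighbor $N_{N_C(v_1)}(v_2)$ collapses $3^{\frac{n}{2}}-3^{\frac{n}{2}-2}$ choices, since the fixed subcode now has codimension~$2$ in the intermediate code; and so on. After $k$ stages we obtain $\prod_{j=0}^{k-1}(3^{\frac{n}{2}}-3^{\frac{n}{2}-1-j})$ ways of generating each $k$-neighbor. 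Dividing gives the desired formula.

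The main obstacle is the recursive multiplicity count: the successive self-orthogonality conditions must genuinely remain independent so the factor-of-$3$ reduction actually holds, and one must check that the recursive application of Lemma~\ref{Lem:NumEqualNeighborTypeIIIAllOne} produces the factor $3^{\frac{n}{2}}-3^{\frac{n}{2}-1-j}$ at the $j$-th stage rather than the uniform $2\cdot 3^{\frac{n}{2}-1}$. I would handle this by induction on $k$, showing that after $j$ steps the intermediate code $D_j := N_{D_{j-1}}(v_j)$ has a codimension-$j$ subcode fixed by all valid continuations, so that the number of vectors giving the same next neighbor is exactly $3^{\frac{n}{2}}-3^{\frac{n}{2}-1-j}$, which is the essentially only non-routine part of the argument.
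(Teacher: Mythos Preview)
Your proposal is correct and follows essentially the same approach as the paper's own proof: set up the counting fraction $\#S_k(n,\allone)$ over the number of ways each $k$-neighbor is generated, use Lemma~\ref{Lem:NumSelfOrthoVecTypeIIIAllOne} for the first factor of the numerator, reduce by a factor of~$3$ at each subsequent step, and obtain the denominator by applying Lemma~\ref{Lem:NumEqualNeighbor} (equivalently Lemma~\ref{Lem:NumEqualNeighborTypeIIIAllOne}) recursively. Your explicit discussion of why the $j$-th recursive factor is $3^{n/2}-3^{n/2-1-j}$ rather than the uniform $2\cdot 3^{n/2-1}$ is in fact more careful than the paper, which simply asserts this step.
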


\begin{proof}
	Let $C$ be a Type~$\III$ code of length~$n \equiv 0\pmod {12}$ 
	containing~$\allone$.
	Then
	\[
	L_{k}^{\III}(n,\allone)
	=
	\#
	\{
	D \in V_{\III}(n,\allone)
	\mid
	\text{$D$ is a $k$-neighbor of~$C$}
	\}.
	\]
	Let $S_{k}(n,\allone)$ be the set of~$k$ different self-orthogonal 
	vectors $v_{1}, v_{2},\ldots, v_{k} \in \FF_{3}^{n}$
	and not in~$C$
	such that each~$v_{j}$ are orthogonal 
	to~$\allone, v_{1}, v_{2},\ldots, v_{j-1}$.
	Then
	\[
	L_{k}^{\III}(n,\allone)
	=
	\frac{\# S_{k}(n,\allone)}{\# \text{ways each $k$-neighbor of~$C$ is generated}}.
	\]
	By Lemma~\ref{Lem:NumSelfOrthoVecTypeIIIAllOne}, 
	we have the number of self-orthogonal vectors in~$\FF_{3}^{n}$ 
	that are orthogonal to~$\allone$ and not in~$C$ is
	$3^{n-2} - 3^{\frac{n}{2}-1}$.
	Moreover, each choice of a self-orthogonal vector~$v_{j}$
	reduces the number of available self-orthogonal vectors 
	in ambient space by~$\frac{1}{3}$,
	since it must be orthogonal to the previous~$v_{j}$
	and its weight is multiple of~$3$. 
	This provides that the number of choices for the vectors is
	$\prod_{i=0}^{k-1}(3^{n-2-i} - 3^{\frac{n}{2}-1})$
	the number of choices for self-orthogonal vectors.
	By using Lemma~\ref{Lem:NumEqualNeighbor} recursively,
	we can have~$\prod_{j = 0}^{k-1}(3^{\frac{n}{2}}-3^{\frac{n}{2}-1-j})$
	the number of ways each $k$-neighbor of~$C$ is generated.
	Hence
	\[
	L_{k}^{\III}(n,\allone)
	=
	\dfrac{\prod_{i=0}^{k - 1} (3^{n-2-i}-3^{\frac{n}{2}-1})}{\prod_{j = 0}^{k-1}(3^{\frac{n}{2}}-3^{\frac{n}{2}-1-j})}.
	\]
	This completes the proof.
\end{proof}

\begin{rem}
	Taking $k = 1$ in the above theorem, we have
	\[
		L_{1}^{\III}(n,\allone)
		=
		\frac{3^{n-2}-3^{\frac{n}{2}-1}}{3^{\frac{n}{2}}-3^{\frac{n}{2}-1}}
		=
		\frac{3^{n-1}-3^{\frac{n}{2}}}{2.3^{\frac{n}{2}}}
		=
		\frac{3^{\frac{n}{2}-1}-1}{2}.
	\]
	This gives the number of $1$-neighbors of a Type~$\III$
	code with~$\allone$
	as presented in Theorem~\ref{Thm:GrDegTypeIIIAllOne}.
\end{rem}

\begin{ex}
	Let $n = 12$. 
	Then $T_{\III}(n,\allone) = \prod_{i=0}^{\frac{n}{2}-2} (3^{i}+1) = 183680$.
	Moreover, $\deg(\Gamma_{\III}(n)) = \frac{1}{2}(3^{\frac{n}{2}-1}-1) =  121$.
	This implies the graph~$\Gamma_{\III}(n,\allone)$ has~$183680$ vertices and is regular with degree~$121$. By Remark~\ref{Rem:ZeroNeighborTypeIIIAllOne}, 
	we have $L_{k}^{\III}(n) = 1$ for $k = 0$. 
	By Theorem~\ref{Thm:k-neighborsAllOne},
	we have the following $k$-neighbors of $\Gamma_{\III}(n,\allone)$.
	
	\begin{tabular}{ll}
		For $k =1$:
		&
		$L_{k}^{\III}(n,\allone)
		=
		\dfrac{3^{10}-3^5}{3^6-3^5}
		= 
		121.$\\
		For $k =2$:
		&
		$L_{k}^{\III}(n,\allone)
		=
		\dfrac{(3^{10}-3^{5})(3^9-3^5)}{(3^6-3^{5})(3^6-3^4)}
		= 
		3630.$\\
		For $k = 3$:
		&
		$L_{k}^{\III}(n,\allone)
		=
		\dfrac{(3^{10}-3^{5})(3^9-3^5)(3^8-3^5)}{(3^6-3^{5})(3^6-3^4)(3^6-3^3)}
		= 
		32670.$\\
		For $k = 4$:
		&
		$L_{k}^{\III}(n,\allone)
		=
		\dfrac{(3^{10}-3^{5})(3^9-3^5)(3^8-3^5)(3^7-3^5)}{(3^6-3^{5})(3^6-3^4)(3^6-3^3)(3^6-3^2)}
		= 
		88209.$\\
		For $k = 5$:
		&
		$L_{k}^{\III}(n,\allone)
		=
		\dfrac{(3^{10}-3^{5})(3^9-3^5)(3^8-3^5)(3^7-3^5)(3^6-3^5)}{(3^6-3^{5})(3^6-3^4)(3^6-3^3)(3^6-3^2)(3^6-3)}
		= 
		59049.$
	\end{tabular}
	
	\noindent
	Then $1+121+3630+32670+88209+59049 = 183680$ which is the total number of Type~$\III$ codes containing~$\allone$.
\end{ex}

Now we have the following analogous result of Theorem~\ref{Thm:NumSlfDualkNeighbor}.

\begin{thm}\label{Thm:NumSlfDualkNeighborAllOne}
	Let~$n \equiv 0\pmod {12}$.
	The number of Type~$\III$ codes of length~$n$
	containing~$\allone$
	is $\sum_{k = 0}^{\frac{n}{2}-1} L_{k}^{\III}(n,\allone)$.
\end{thm}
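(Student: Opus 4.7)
The plan is to mirror the proof of Theorem~\ref{Thm:NumSlfDualkNeighbor}, adjusting only where the sub-ensemble of Type~$\III$ codes containing~$\allone$ forces a smaller diameter. First I would fix an arbitrary Type~$\III$ code $C \in V_{\III}(n,\allone)$ to serve as a reference vertex in the graph~$\Gamma_{\III}(n,\allone)$. The counting strategy is then to partition $V_{\III}(n,\allone)$ according to the graph distance from $C$: for each integer~$k$ with $0 \leq k \leq \frac{n}{2}-1$, the set of codes at distance exactly~$k$ from~$C$ is precisely the set of $k$-neighbors of~$C$, and these sets are pairwise disjoint by definition of shortest-path distance.

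Next I would invoke Theorem~\ref{Thm:MaxDistanceTypeIIIAllOne}, which ensures that $\Gamma_{\III}(n,\allone)$ is connected and that the maximum distance between any two vertices is bounded by~$\frac{n}{2}-1$ (the $-1$ coming from the common one-dimensional subspace~$\langle \allone \rangle$ shared by every vertex). Consequently every $D \in V_{\III}(n,\allone)$ arises as a $k$-neighbor of~$C$ for a unique~$k$ in the range $0 \leq k \leq \frac{n}{2}-1$, with $k=0$ accounted for by Remark~\ref{Rem:ZeroNeighborTypeIIIAllOne}. Summing the cardinalities of these disjoint sets and applying Theorem~\ref{Thm:k-neighborsAllOne} to each yields
\[
	|V_{\III}(n,\allone)| = \sum_{k=0}^{\frac{n}{2}-1} L_{k}^{\III}(n,\allone),
\]
which is the claimed identity.

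The only subtle point is the unique-$k$ assignment, but it is immediate from the definition of $k$-neighbors as pairs of vertices whose shortest connecting path in~$\Gamma_{\III}(n,\allone)$ has length exactly~$k$. Thus there is no real obstacle; the proof is essentially two lines once Theorems~\ref{Thm:MaxDistanceTypeIIIAllOne} and~\ref{Thm:k-neighborsAllOne} are in hand, and its brevity is the whole point of stating it as an analogue of Theorem~\ref{Thm:NumSlfDualkNeighbor}.
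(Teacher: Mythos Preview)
Your proposal is correct and follows essentially the same approach as the paper: both fix a base code, invoke Theorem~\ref{Thm:MaxDistanceTypeIIIAllOne} for connectedness and the diameter bound $\frac{n}{2}-1$, and then sum over distance classes. Your version is in fact more explicit than the paper's two-line argument, since you spell out the partition-by-distance idea and the uniqueness of~$k$ that the paper leaves implicit.
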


\begin{proof}
	Let $C_{1}$ and $C_{2}$ be any two 
	Type~$\III$ codes of length~$n \equiv 0\pmod {12}$
	containing~$\allone$.
	If $C_{1}$ and $C_{2}$ are connected by a path 
	in~$\Gamma_{\III}(n,\allone)$,
	then by Theorem~\ref{Thm:MaxDistanceTypeIIIAllOne}, 
	the maximum path length will be~$\frac{n}{2}-1$.
	This completes the proof. 	
\end{proof}

\section{Type~$\IV$ codes}\label{Sec:TypeIVCodes}

A self-dual code over~$\FF_4$ 
where all codewords have even weights 
is called \emph{Type~$\IV$} code.
It is well-known that the Type~$\IV$ code exists 
if and only if $n \equiv 0\pmod 2$.
Let $T_{\IV}(n)$ be the number of Type~$\IV$ codes of 
length~$n \equiv 0 \pmod 2$. 
From~\cite{NRS, VP1968, RS}, 
we have an explicit formula that
gives the number $T_{\IV}(n)$ as follows :

\begin{equation}\label{Equ:NumTypeIVCodes}
	T_{\IV}(n)
	= 
	\prod\limits_{i=0}^{\frac{n}{2} - 1}
	(2^{2i+1}+1).
\end{equation}

\begin{lem}\label{Lem:NumSelfOrthoVecTypeIV}
	Let $n \equiv 0\pmod 2$. 
	Then the number of self-orthogonal vectors in~$\FF_{4}^{n}$
	is $2^{n-1}(2^n+1)$. 
\end{lem}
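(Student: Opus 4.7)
The plan is to mimic the combinatorial argument used in the proof of Lemma~\ref{Lem:NumSelfOrthoVecTypeIII}, which is in fact simpler here because the relevant divisibility condition is parity rather than divisibility by $3$. First I would invoke Lemma~\ref{Lem:SlfOrthoTypeIV} to translate the problem from a sum-of-squares condition into a weight condition: a vector in $\FF_{4}^{n}$ is self-orthogonal if and only if its weight is even. Thus the count reduces to counting even-weight vectors in $\FF_{4}^{n}$.

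Next, since there are $\binom{n}{k} 3^{k}$ vectors in $\FF_{4}^{n}$ of weight exactly $k$ (choose the $k$ nonzero coordinates and pick a nonzero element of $\FF_{4}$ for each), the desired count is
\[
\sum_{\substack{k=0 \\ k \text{ even}}}^{n} \binom{n}{k} 3^{k}.
\]
To evaluate this sum, I would apply the standard parity trick of expanding $(1+3)^{n}$ and $(1-3)^{n}$ and averaging, which isolates the even-indexed terms and gives
\[
\sum_{\substack{k=0 \\ k \text{ even}}}^{n} \binom{n}{k} 3^{k} = \frac{4^{n} + (-2)^{n}}{2}.
\]
Finally I would use the hypothesis $n \equiv 0 \pmod{2}$ to replace $(-2)^{n}$ by $2^{n}$, so that the expression simplifies to $\frac{2^{2n} + 2^{n}}{2} = 2^{n-1}(2^{n}+1)$, as claimed.

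There is no real obstacle: the argument is a direct parallel of the $\FF_{3}$ case, with cube roots of unity replaced by the simpler $\pm 1$, and the only subtle point is remembering to use the hypothesis that $n$ is even precisely at the step where $(-2)^{n}$ becomes $2^{n}$. Alternatively, as the authors note in the proof of Lemma~\ref{Lem:NumSelfOrthoVecTypeIII}, this identity is a special case of a classical result of Dickson, so one could simply cite~\cite{Dickson}; but the binomial computation above is self-contained and a single line after the standard generating-function trick.
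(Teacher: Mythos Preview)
Your proposal is correct and follows essentially the same approach as the paper: both invoke Lemma~\ref{Lem:SlfOrthoTypeIV} to reduce to counting even-weight vectors, write that count as $\sum_{k\text{ even}}\binom{n}{k}3^{k}$, and evaluate it via the standard $(1+3)^{n}\pm(1-3)^{n}$ averaging trick to obtain $\tfrac{4^{n}+2^{n}}{2}=2^{n-1}(2^{n}+1)$. You are slightly more explicit than the paper about where the hypothesis $n\equiv 0\pmod 2$ enters (namely to replace $(-2)^{n}$ by $2^{n}$), which is a nice touch.
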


\begin{proof}
	By Lemma~\ref{Lem:SlfOrthoTypeIV}, 
	the number of self-orthogonal vectors of 
	length~$n \equiv 0\pmod 2$ is
	\[
		C(n,0) + 3^{2} C(n,2) + 3^{4} C(n,4) + \cdots +3^{n} C(n,n),
	\]
	where $C(n,k)$ is the binomial function. 
	Then immediately it can be written that
	 \begin{align*}
	 	&C(n,0) + 3^{2} C(n,2) + 3^{4} C(n,4) + \cdots +3^{n} C(n,n)\\
	 	& =
	 	\frac{4^n + 2^{n}}{2}\\
	 	& =
	 	2^{2n-1} + 2^{n-1}.
	 \end{align*}
	Hence the number of self-orthogonal vectors is 
	$2^{n-1} (2^n+1)$. 
\end{proof}

\begin{thm}\label{Thm:NeighborTypeIV}
	Let $n \equiv 0\pmod 2$.
	Let~$C$ be a Type~$\IV$ code of length~$n$.
	Then $N_{C}(v)$ is a Type~$\IV$ code if and only if 
	$v \in \FF_{4}^{n}$ is a self-orthogonal vector. 
\end{thm}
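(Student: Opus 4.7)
The plan is to mirror the strategy used in Theorem~\ref{Thm:NeighborTypeIII}, but with the additional care demanded by the Hermitian inner product on~$\FF_{4}^{n}$. Throughout, I use the characterization in Lemma~\ref{Lem:SlfOrthoTypeIV}: for $n$ even, a vector in~$\FF_{4}^{n}$ is self-orthogonal if and only if its weight is even.

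\emph{Forward direction.} Assume $N_{C}(v) = \langle C_{0}, v \rangle$ is a Type~$\IV$ code. Since $v \in N_{C}(v)$, its weight is even, and hence by Lemma~\ref{Lem:SlfOrthoTypeIV} the vector~$v$ is self-orthogonal.

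\emph{Reverse direction.} Assume $v$ is self-orthogonal. If $v \in C$ then $C_{0} = C$ and $N_{C}(v) = C$, which is Type~$\IV$ by hypothesis, so I may assume $v \notin C$. Then $C_{0} = \{w \in C \mid w \cdot v = 0\}$ is a subcode of~$C$ of codimension~$1$, so $\dim N_{C}(v) = \frac{n}{2}$. The plan is to show that $N_{C}(v)$ is self-orthogonal; combined with the dimension count this yields self-duality, and then Lemma~\ref{Lem:SlfOrthoTypeIV} immediately promotes every codeword to even weight, giving Type~$\IV$.

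To verify self-orthogonality, I will take two arbitrary elements $w_{1} + \alpha_{1} v$ and $w_{2} + \alpha_{2} v$ of $N_{C}(v)$ with $w_{1}, w_{2} \in C_{0}$ and $\alpha_{1}, \alpha_{2} \in \FF_{4}$, and expand their Hermitian inner product bilinearly using $(\alpha u) \cdot v = \alpha (u \cdot v)$ and $u \cdot (\alpha v) = \alpha^{2} (u \cdot v)$. The four resulting terms vanish for the following reasons: $w_{1} \cdot w_{2} = 0$ because $C$ is self-dual; $w_{1} \cdot v = 0$ since $w_{1} \in C_{0}$; $v \cdot v = 0$ by hypothesis; and $v \cdot w_{2} = 0$ because $w_{2} \cdot v = 0$ and the identity $(w_{2} \cdot v)^{2} = v \cdot w_{2}$ in $\FF_{4}$ (a consequence of $x^{4} = x$) shows that the Hermitian orthogonality relation is symmetric as a predicate. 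Hence $N_{C}(v) \subseteq N_{C}(v)^{\perp}$, and equality follows from the dimension count.

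\emph{Main obstacle.} The only real subtlety, compared with the Type~$\III$ case, is the non-symmetry of the Hermitian form: $w \cdot v$ and $v \cdot w$ are not literally equal, and the expansion of $(w_{1} + \alpha_{1} v)\cdot(w_{2} + \alpha_{2} v)$ involves both $\alpha_{1}$ and $\alpha_{2}^{2}$. The plan is therefore to keep the scalars separate and invoke the Frobenius identity $x^{4} = x$ in $\FF_{4}$ to turn the orthogonality condition $w \cdot v = 0$ into $v \cdot w = 0$ when needed.
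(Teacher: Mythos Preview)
Your proposal is correct and follows essentially the same strategy as the paper's proof. The paper is terser: it leans on the preliminary assertion that $N_{C}(v)$ is already a self-dual neighbor and only checks the even-weight condition via $(w+v)\cdot(w+v)=w\cdot w+2\,w\cdot v+v\cdot v=0$ for $w\in C_{0}$, whereas you establish self-duality from scratch (orthogonality of arbitrary pairs plus a dimension count) and are explicit about the sesquilinearity of the Hermitian form and the Frobenius trick $v\cdot w=(w\cdot v)^{2}$ --- a worthwhile clarification, since the paper's ``$2\,w\cdot v$'' glosses over exactly the asymmetry you identified as the main obstacle.
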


\begin{proof}
	Suppose $N_{C}(v)$ is a Type~$\IV$ code of length~$n \equiv 0\pmod 2$.
	Then $v$ must be a self-orthogonal vector with even weight, 
	otherwise $N_{C}(v)$ will no longer a Type~$\IV$ code.
	
	Conversely, suppose that $v \in \FF_{4}^{n}$ 
	is a self-orthogonal vector not in~$C$.
	Then by Lemma~\ref{Lem:SlfOrthoTypeIV}, 
	$\wt(v) \equiv 0\pmod 2$. 
	Since $C$ is a Type~$\IV$ code, therefore 
	$C_{0} = \{w \in C \mid w \cdot v = 0\}$
	is a subcode of~$C$ with co-dimension~$1$.
	Let $w \in C_{0}$.
	Then 
	$\wt(w + v) \equiv 0 \pmod 2$, 
	since self-orthogonal vectors $w$ and $v$ 
	have even weights and are orthogonal to each other and 
	\begin{align*}
		(w + v)
		\cdot
		(w + v)
		& =
		w\cdot w
		+
		2 w\cdot v
		+
		v \cdot v
		= 0.
	\end{align*}
	Therefore, the weight of each vector in~$N_{C}(v)$ 
	is even and hence $N_{C}(v)$
	is a Type~$\IV$ code. 
\end{proof}

\begin{thm}\label{Thm:ConnectTypeIV}
	Let $n \equiv 0 \pmod 2$. 
	Let $C$ be a Type~$\IV$ code of length~$n$.
	If $C^{\prime} = N_{C}(v)$ 
	for some self-orthogonal vector~$v \in \FF_{4}^{n}$,
	then $C = N_{C^{\prime}}(w)$
	for some self-orthogonal vector~$w \in \FF_{4}^{n}$.
\end{thm}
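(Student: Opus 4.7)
The plan is to adapt the argument from Theorem~\ref{Thm:ConnectTypeIII} to the $\FF_{4}$ setting, using Lemma~\ref{Lem:SlfOrthoTypeIV} in place of Lemma~\ref{Lem:SlfOrthoTypeIII} and Theorem~\ref{Thm:NeighborTypeIV} in place of Theorem~\ref{Thm:NeighborTypeIII}. The structural role played by ``weight divisible by~$3$'' in the Type~$\III$ case is now played by ``weight even''; otherwise the skeleton of the proof is identical.

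First, given a Type~$\IV$ code $C$ of length $n$ and a self-orthogonal vector $v \in \FF_{4}^{n}$ with $v \notin C$, I would form the subcode $C_{0} := \{u \in C \mid u \cdot v = 0\}$. The $\FF_{4}$-linear functional $u \mapsto u \cdot v$ on $C$ is nonzero because $v \notin C = C^{\perp}$, so $C_{0}$ has codimension~$1$ in $C$. Select any $w \in C \setminus C_{0}$; as a codeword of the Type~$\IV$ code $C$, $w$ is self-orthogonal (Lemma~\ref{Lem:SlfOrthoTypeIV}) and, by self-duality of $C$, orthogonal to every element of $C$, in particular to every element of $C_{0}$. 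By Theorem~\ref{Thm:NeighborTypeIV}, $C' := N_{C}(v)$ is Type~$\IV$.

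Next, I would verify that $w \notin C'$ so that $N_{C'}(w)$ is well-defined: writing $C' = \langle C_{0}, v\rangle$, if $w = u + \alpha v$ with $u \in C_{0}$ and $\alpha \in \FF_{4}$, then $\alpha = 0$ forces $w \in C_{0}$ while $\alpha \neq 0$ forces $v \in C$, both contradictions. I would then form $C_{0}' := \{u' \in C' \mid u' \cdot w = 0\}$, a codimension-$1$ subcode of $C'$ by the same nondegeneracy argument (using $w \notin C' = (C')^{\perp}$). Since $w \perp C_{0}$ and $C_{0} \subseteq C'$, we have $C_{0} \subseteq C_{0}'$, and a dimension count ($n/2-1$ on both sides) yields $C_{0}' = C_{0}$. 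Hence $N_{C'}(w) = \langle C_{0}', w\rangle = \langle C_{0}, w\rangle = C$, as desired.

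The only subtlety is the Hermitian-type inner product on $\FF_{4}^{n}$: although $u \cdot v = \sum u_{i}v_{i}^{2}$ differs from $v \cdot u = \sum v_{i}u_{i}^{2}$ in general, one has $(v \cdot u)^{2} = u \cdot v$ since $x^{4} = x$ on $\FF_{4}$, so $u \cdot v = 0$ iff $v \cdot u = 0$. Thus orthogonality remains a symmetric relation and the codimension computations on both $C$ and $C'$ go through unchanged. Beyond this bookkeeping, I do not expect any genuine obstacle.
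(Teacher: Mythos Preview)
Your proof is correct and follows essentially the same approach as the paper's: form $C_{0}$, pick $w\in C\setminus C_{0}$, show $C_{0}'=C_{0}$, and conclude $N_{C'}(w)=\langle C_{0},w\rangle=C$. Your version is actually more careful than the original in that you explicitly verify $w\notin C'$, justify $C_{0}'=C_{0}$ via inclusion plus a dimension count, and flag the Hermitian asymmetry of the $\FF_{4}$ inner product together with the reason orthogonality remains symmetric---points the paper passes over silently.
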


\begin{proof}
	Let $C$ be a Type~$\IV$ code and $v$ be a self-orthogonal 
	vector not in~$C$. 
	Then by Lemma~\ref{Lem:SlfOrthoTypeIV},
	we have $\wt(v)$ is even.
	Then 
	$C_{0} = \{u \in C \mid u \cdot v = 0\}$
	is a subcode of~$C$ with co-dimension~$1$.
	This implies $C = \langle C_{0}, w \rangle$ 
	for some self-orthogonal vector~$w$.
	By Lemma~\ref{Lem:SlfOrthoTypeIV},
	$\wt(w) \equiv 0\pmod 2$.
	Clearly, $w$ is orthogonal to each vector in~$C_{0}$.
	Moreover, by Theorem~\ref{Thm:NeighborTypeIV}, 
	$C^{\prime} = N_{C}(v)$ is a Type~$\IV$ code.
	This implies $N_{C^{\prime}}(w)$ is also a Type~$\IV$
	code. 
	Let $C_{0}^{\prime} = \{u^{\prime} \in C^{\prime} \mid u^{\prime}\cdot w = 0\}$.
	Then $C_{0}^{\prime}$ is a subcode of~$C^{\prime}$ with co-dimension~$1$.
	Since $w$ is orthogonal to each vector in~$C_{0}$,
	therefore  $C_{0}^{\prime} = C_{0}$.
	Hence $C = \langle C_{0}, w \rangle = \langle C_{0}^{\prime}, w\rangle = N_{C^{\prime}}(w)$.
\end{proof}

\begin{df}
	Let $n \equiv 0\pmod 2$.
	Let $V_{\IV}(n)$ be the set all Type~$\IV$ codes 
	of length~$n$.
	Let $\Gamma_{\IV}(n):=(V_{\IV}(n),E_{\IV}(n))$ be a graph, 
	where any two vertices in~$V_{\IV}(n)$ are connected 
	by an edge in~$E_{\IV}(n)$ if and only if they are neighbors. 
\end{df}

The following theorems gives basic properties of $\Gamma_{\IV}(n)$ 
and answers the various counting questions related to it. 

\begin{thm}\label{Thm:NeighborGraphTypeIV}
	Let $n \equiv 0\pmod 2$.
	Then the graph $\Gamma_{\IV}(n)$ is simple and undirected.
\end{thm}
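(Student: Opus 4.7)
The plan is to mirror the argument given for Theorem~\ref{Thm:NeighborGraphTypeIII} in the Type~$\III$ setting, since the hypothesis $n\equiv 0\pmod 2$ places us in exactly the analogous situation for Type~$\IV$ codes. To establish that $\Gamma_{\IV}(n)$ is simple, I need to verify two things: that there are no loops, and that there are no multiple edges. The no-loop claim is immediate from the definition of neighbors, because two codes are neighbors only if they share a subcode of codimension $1$, hence are in particular distinct, so no vertex is adjacent to itself. The no-multiple-edge condition follows directly from how $E_{\IV}(n)$ was defined: any two distinct vertices are joined by an edge if and only if they are neighbors, which is a condition and not a counted object, so at most one edge exists between any pair.

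For the undirected part, I would invoke Theorem~\ref{Thm:ConnectTypeIV}. That result says that if $C$ is a Type~$\IV$ code and $C' = N_C(v)$ for some self-orthogonal $v \in \FF_4^n$ not in $C$, then there is a self-orthogonal $w \in \FF_4^n$ with $C = N_{C'}(w)$. In graph-theoretic terms, this is exactly the statement that the neighbor relation is symmetric on $V_{\IV}(n)$, so the edge between $C$ and $C'$ can be treated as unordered. Combined with the loop-free and simple facts, this gives the conclusion.

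The main obstacle, if any, is purely a matter of making sure the Type~$\IV$ analogues of the supporting lemmas are in place, and here they are: Lemma~\ref{Lem:SlfOrthoTypeIV} gives the weight parity needed to ensure that $N_C(v)$ stays in $V_{\IV}(n)$, Theorem~\ref{Thm:NeighborTypeIV} guarantees that neighbors of Type~$\IV$ codes are again Type~$\IV$, and Theorem~\ref{Thm:ConnectTypeIV} supplies the symmetry. Once these are cited, the proof is essentially two sentences, entirely parallel to the proof of Theorem~\ref{Thm:NeighborGraphTypeIII}, with no genuine new mathematical content to wrestle with.
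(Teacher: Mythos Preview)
Your proposal is correct and follows essentially the same approach as the paper's proof: no loops because a code is not a neighbor of itself, and undirectedness via Theorem~\ref{Thm:ConnectTypeIV}. You are slightly more thorough in explicitly addressing the absence of multiple edges, but otherwise your argument is the direct Type~$\IV$ analogue of Theorem~\ref{Thm:NeighborGraphTypeIII}, exactly as the paper intends.
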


\begin{proof}
	Since any Type~$\IV$ code is not a neighbor of itself, therefore
	$\Gamma_{\IV}(n)$ contains no loop. 
	Moreover, by Theorem~\ref{Thm:ConnectTypeIV}, 
	we have if $C$ is connected to~$C^{\prime}$, 
	then $C^{\prime}$ is connected to~$C$.
	This implies $\Gamma_{\IV}(n)$ is not a directed graph.
\end{proof}

\begin{thm}\label{Thm:MaxDistanceTypeIV}
	Let $n \equiv 0\pmod 2$. 
	Then the graph $\Gamma_{\IV}(n)$ is connected 
	with maximum path length $\frac{n}{2}$ 
	between two vertices.
\end{thm}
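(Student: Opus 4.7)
The plan is to mirror the argument used for Theorem~\ref{Thm:MaxDistanceTypeIII}, adapted to the Type~$\IV$ setting where self-orthogonality corresponds to even weight (Lemma~\ref{Lem:SlfOrthoTypeIV}) rather than weight divisible by~$3$. First, to establish connectedness together with the upper bound on distances, I would take two arbitrary Type~$\IV$ codes $C_{1}, C_{2}$ of length $n \equiv 0 \pmod 2$ and pick a basis $v_{1}, \ldots, v_{n/2}$ of $C_{2}$. Since every codeword of $C_{2}$ is self-orthogonal, each $v_{i}$ is a self-orthogonal vector of $\FF_{4}^{n}$. I would then iteratively define $D_{1} := N_{C_{1}}(v_{1})$ and $D_{i} := N_{D_{i-1}}(v_{i})$ for $2 \le i \le n/2$; by Theorem~\ref{Thm:NeighborTypeIV} each $D_{i}$ is a Type~$\IV$ code, so $C_{1}, D_{1}, D_{2}, \ldots, D_{n/2} = C_{2}$ realises a path in $\Gamma_{\IV}(n)$ of length at most $n/2$.

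To see that the bound $n/2$ is attained, I would exhibit two Type~$\IV$ codes whose intersection is trivial, since two codes at graph distance $k$ necessarily share a subcode of codimension $k$ (equivalently, of dimension $\frac{n}{2}-k$), and a trivial intersection forces $k = n/2$. For the base case I would use a small Type~$\IV$ pair of length $2$, for example $C_{1}^{\prime} = \langle (1,\omega) \rangle$ and $C_{2}^{\prime} = \langle (1,\omega^{2}) \rangle$, each one-dimensional and self-dual under the Hermitian form, all codewords of even weight, and meeting only in~$\0$. Forming $k$-fold direct sums $C_{1} := C_{1}^{\prime} \oplus \cdots \oplus C_{1}^{\prime}$ and $C_{2} := C_{2}^{\prime} \oplus \cdots \oplus C_{2}^{\prime}$ produces Type~$\IV$ codes of length $n = 2k$ with $C_{1} \cap C_{2} = \0$, so the distance between $C_{1}$ and $C_{2}$ in $\Gamma_{\IV}(n)$ is exactly $n/2$.

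The main point to treat carefully is the lower-bound half, namely the identification of graph distance with codimension of intersection. This is implicit in the neighbor construction: at each step $D_{i} = N_{D_{i-1}}(v_{i})$ the intersection $D_{i-1} \cap D_{i}$ has codimension~$1$ in each, so along any walk of length $k$ the dimension of a shared subcode can drop by at most $1$ per edge. Hence $\dim(C_{1} \cap C_{2}) \ge \frac{n}{2} - k$, which gives the desired contradiction when the intersection is trivial and the walk shorter than $n/2$. No extra ingredients beyond Lemma~\ref{Lem:SlfOrthoTypeIV}, Theorem~\ref{Thm:NeighborTypeIV}, and this elementary dimension tracking are required, so the proof should follow the Type~$\III$ template almost verbatim.
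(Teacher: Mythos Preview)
Your proposal is correct and follows essentially the same route as the paper: the connectedness argument via iterated neighbors $D_{i}=N_{D_{i-1}}(v_{i})$ is identical, and the sharpness argument via direct sums of a length-$2$ pair with trivial intersection is the same idea (the paper uses $\langle(1,1)\rangle$ and $\langle(1,\omega^{2})\rangle$ rather than your $\langle(1,\omega)\rangle$ and $\langle(1,\omega^{2})\rangle$, an immaterial difference). Your explicit dimension-tracking justification that a walk of length~$k$ forces $\dim(C_{1}\cap C_{2})\ge \frac{n}{2}-k$ is in fact more careful than the paper, which leaves this implication implicit.
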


\begin{proof}
	Let $C_{1}$ and $C_{2}$ be two Type~$\IV$ 
	codes of length~$n \equiv 0\pmod 2$.
	Let $C_{2} = \langle v_{1},\ldots, v_{\frac{n}{2}}\rangle$.
	Then each~$v_{i}$ is a self-orthogonal vector 
	such that $\wt(v_{i}) \equiv 0 \pmod 2$.
	Let $D_{1} := N_{C_{1}}(v_{1})$ 
	and $D_{i} := N_{D_{i-1}}(v_{i})$
	for $i = 2,\ldots, \frac{n}{2}$.
	Then $C_{1}$, $D_{1}$, $D_{2}$, $\ldots$, $D_{\frac{n}{2}} = C_{2}$
	is the path from $C_{1}$ to $C_{2}$.
	Hence the graph $\Gamma_{\IV}(n)$ is connected. 
	To show the maximum path length between two vertices
	is $\frac{n}{2}$, let n = 2.
	Let 
	$$C_{1}^{\prime} = \{(0,0),(1,1),(\omega,\omega),(\omega^2,\omega^2)\}$$
	be a code of length~$2$ over~$\FF_{4}$.
	It is easy to check that $C_{1}^{\prime}$ is Type~$\IV$.
	Then immediately we have 
	$$C_{2}^{\prime} = \{(0,0),(1,\omega^2),(\omega,1),(\omega^2,\omega)\},$$
	which is a neighbor of $C_{1}^{\prime}$ have path length~$1$.
	Now let the following $k$-times 
	direct sums for positive integer~$k$:
	\begin{align*}
		C_{1} & = C_{1}^{\prime} \oplus \cdots \oplus C_{1}^{\prime},\\
		C_{2} & = C_{2}^{\prime} \oplus \cdots \oplus C_{2}^{\prime}.
	\end{align*}
	This implies the length of $C_{1}$ and $C_{2}$
	is $2k$ and $C_{1}\cap C_{2} = \0$. Hence there exists two Type~$\IV$ 
	codes of length $n \equiv 0\pmod 2$ such that
	the maximum path length is $\frac{n}{2}$ in 
	the graph~$\Gamma_{\IV}(n)$ is $\frac{n}{2}$.
\end{proof} 

\begin{ex}\label{Ex:MaxPathLengthIV}
	Let $C$ be a code of length~$4$ over~$\FF_{4}$ with generator matrix:
	\[
	\begin{pmatrix}
		1 & 1 & 0 & 0\\
		0 & 0 & 1 & 1\\
	\end{pmatrix}.
	\]
	It is easy to check that $C$ is a Type~$\IV$ code.
	Then $D_{1} := N_{C}(v_{1})$ is a neighbor of~$C$,
	where $v_{1} = (1,\omega,0,0)$ is a self-orthogonal 
	vector in $\FF_{4}^{4}$ and not in~$C$.
	Also, $D_{2} := N_{D_{1}}(v_{2})$ is a neighbor of~$D_{1}$,
	where $v_{2} = (0,0,1,\omega^{2})$ is a self-orthogonal 
	vector in $\FF_{4}^{4}$ and not in~$D_{1}$.
	Immediately, $D_{1}$ and $D_{2}$ are Type~$\IV$ codes.
	The generator matrix of $D_{2}$ is:
	\[
	\begin{pmatrix}
		1 & \omega & 0 & 0\\
		0 & 0 & 1 & \omega^{2}
	\end{pmatrix}.
	\]
	Moreover, we can see that $C \cap D_{2} = \bm{0}$.
	This conclude that the maximum path length 
	of the graph $\Gamma_{\IV}(4)$ is $2$. 
\end{ex}

\begin{thm}\label{Thm:GrVertexNumTypeIV}
	Let $n \equiv 0\pmod 2$.
	Then the number of vertices in $\Gamma_{\IV}(n)$ is
	$\prod_{i=0}^{\frac{n}{2} - 1}(2^{2i+1}+1)$.
\end{thm}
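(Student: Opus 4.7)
The plan is essentially immediate from the definitions, paralleling the proof of Theorem~\ref{Thm:GrVertexNumTypeIII}. By definition, the vertex set $V_{\IV}(n)$ is precisely the set of all Type~$\IV$ codes of length $n$, so counting vertices of $\Gamma_{\IV}(n)$ amounts to counting Type~$\IV$ codes of length $n$. I would simply cite the explicit formula~(\ref{Equ:NumTypeIVCodes}), which gives
\[
	T_{\IV}(n) = \prod_{i=0}^{\frac{n}{2} - 1}(2^{2i+1}+1),
\]
and observe that $|V_{\IV}(n)| = T_{\IV}(n)$, completing the proof.

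There is no real obstacle here: the counting formula for Type~$\IV$ codes was already recalled earlier in Section~\ref{Sec:TypeIVCodes} from~\cite{NRS, VP1968, RS}, and the graph $\Gamma_{\IV}(n)$ is defined so that its vertices are exactly the Type~$\IV$ codes. The only conceptual content is the identification of the vertex set with the set of Type~$\IV$ codes, which is built into the definition. Accordingly, this proof should be presented in a single sentence invoking~(\ref{Equ:NumTypeIVCodes}), in the same spirit as the proof of Theorem~\ref{Thm:GrVertexNumTypeIII}.
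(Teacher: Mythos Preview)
Your proposal is correct and matches the paper's own proof essentially verbatim: the paper simply cites~(\ref{Equ:NumTypeIVCodes}) and says ``This completes the proof,'' which is exactly what you suggest.
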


\begin{proof}
	We can have the number of Type~$\IV$ codes of length~$n\equiv 0\pmod 2$ from~(\ref{Equ:NumTypeIVCodes}).
	This completes the proof.
\end{proof}

\begin{lem}\label{Lem:NumEqualNeighborTypeIV}
	Let $n \equiv 0 \pmod 2$. 
	Let~$C$ be a Type~$\IV$ code of length~$n$.
	Suppose~$v_{0} \in \FF_{4}^{n}$ 
	be a self-orthogonal vector not in~$C$.
	Then the number of self-orthogonal vectors~$v \in \FF_{4}^{n}$
	such that $N_{C}(v) = N_{C}(v_{0})$
	is $3.4^{\frac{n}{2}-1}$.
\end{lem}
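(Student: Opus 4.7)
The plan is to specialize the general counting statement Lemma~\ref{Lem:NumEqualNeighbor} to the case $q=4$. That lemma asserts that, for any self-dual code $C$ of length $n$ over $\FF_q$ and any self-orthogonal $v_0 \notin C$, the number of self-orthogonal $v \in \FF_q^n$ with $N_C(v) = N_C(v_0)$ is $(q-1)q^{n/2-1}$. Substituting $q=4$ gives exactly $3 \cdot 4^{n/2-1}$, which is the desired count. This mirrors the Type~$\III$ treatment in Lemma~\ref{Lem:NumEqualNeighborTypeIII} essentially verbatim.

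What I would check is that the setting is genuinely subsumed by Lemma~\ref{Lem:NumEqualNeighbor}. A Type~$\IV$ code of length $n$ is in particular a self-dual code over $\FF_4$, so the hypothesis on $C$ holds. By Lemma~\ref{Lem:SlfOrthoTypeIV}, a vector in $\FF_4^n$ with $n$ even is self-orthogonal with respect to the Hermitian inner product if and only if it has even weight, so the notion of ``self-orthogonal'' used in the statement matches the one invoked by Lemma~\ref{Lem:NumEqualNeighbor}.

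If a reader preferred a self-contained count, I would reproduce the argument behind Lemma~\ref{Lem:NumEqualNeighbor} in this particular setting: by the analogue of Lemma~\ref{Lem:EqualNeighbors}, $N_C(v) = N_C(v_0)$ iff $v = w + \alpha v_0$ for some $w \in C$ with $w \cdot v_0 = 0$ and some nonzero $\alpha \in \FF_4$. The subcode $C_0 = \{w \in C \mid w \cdot v_0 = 0\}$ has dimension $\tfrac{n}{2}-1$, hence $|C_0| = 4^{n/2-1}$; multiplying by the three nonzero scalars in $\FF_4^\times$ produces $3 \cdot 4^{n/2-1}$ self-orthogonal vectors yielding the same neighbor, and each such vector is indeed self-orthogonal since a direct expansion of $(w+\alpha v_0)\cdot(w+\alpha v_0)$ under the Hermitian form, together with characteristic $2$, reduces to $w\cdot w + \alpha^3 v_0 \cdot v_0 = 0$.

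The only mildly nontrivial point, and thus the ``main'' (albeit slight) obstacle, is the Hermitian, rather than symmetric bilinear, nature of the inner product over $\FF_4$. One must verify that the counting argument of Lemma~\ref{Lem:NumEqualNeighbor} still goes through: the functional $w \mapsto w\cdot v_0$ on $C$ is $\FF_2$-linear (not $\FF_4$-linear), but its kernel $C_0$ is still a subspace of $C$ of codimension~$1$ over $\FF_4$ because $C$ self-dual forces $v_0 \notin C^\perp$, and the scalar extension by $\FF_4^\times$ respects self-orthogonality through the identity $(\alpha v_0)\cdot(\alpha v_0) = \alpha^3 (v_0 \cdot v_0)$. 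Once this is observed, the final answer $3 \cdot 4^{n/2-1}$ is immediate.
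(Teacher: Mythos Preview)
Your approach is correct and essentially identical to the paper's: the paper's proof consists of a single sentence citing Lemmas~\ref{Lem:SlfOrthoTypeIV} and~\ref{Lem:NumEqualNeighbor}, i.e.\ exactly the specialization of the general count $(q-1)q^{n/2-1}$ to $q=4$ that you carry out. Your additional remarks on the Hermitian form and the self-contained recount go beyond what the paper spells out, but they are sound and do not change the route taken.
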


\begin{proof}
	By Lemmas~\ref{Lem:SlfOrthoTypeIV} and~\ref{Lem:NumEqualNeighbor},
	we can obtain the result.
\end{proof}

\begin{thm}\label{Thm:GrDegTypeIV}
	Let $n \equiv 0\pmod 2$.
	Then the graph~$\Gamma_{\IV}(n)$ is regular 
	with degree~$\frac{2}{3}(2^{n} - 1)$.	
\end{thm}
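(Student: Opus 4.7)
The plan is to imitate verbatim the proof of Theorem~\ref{Thm:GrDegTypeIII}, replacing each Type~$\III$ ingredient by its Type~$\IV$ counterpart that has already been established in this section. The degree of a vertex $C \in V_{\IV}(n)$ is, by definition, the number of distinct Type~$\IV$ neighbors $N_{C}(v)$ of~$C$; by Theorem~\ref{Thm:NeighborTypeIV}, these arise precisely from self-orthogonal vectors $v \in \FF_{4}^{n}$ that do not lie in~$C$, so the strategy is to count such vectors and then divide by the multiplicity with which each neighbor is represented.

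First I would use Lemma~\ref{Lem:NumSelfOrthoVecTypeIV} to get $2^{n-1}(2^{n}+1)$ as the total count of self-orthogonal vectors in $\FF_{4}^{n}$. Every codeword of a Type~$\IV$ code $C$ is self-orthogonal (each has even weight, hence $u \cdot u = 0$ by Lemma~\ref{Lem:SlfOrthoTypeIV}) and $|C| = 4^{n/2} = 2^{n}$, so the number of self-orthogonal vectors outside~$C$ is
\[
2^{n-1}(2^{n}+1) - 2^{n} = 2^{n-1}\bigl((2^{n}+1) - 2\bigr) = 2^{n-1}(2^{n}-1).
\]
Next I invoke Lemma~\ref{Lem:NumEqualNeighborTypeIV}, which asserts that each neighbor $N_{C}(v_{0})$ is generated by exactly $3 \cdot 4^{n/2-1}$ distinct self-orthogonal vectors. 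Therefore the degree of every vertex $C$ in $\Gamma_{\IV}(n)$ is the quotient
\[
\deg_{\Gamma_{\IV}(n)}(C)
=
\frac{2^{n-1}(2^{n}-1)}{3 \cdot 4^{n/2-1}}
=
\frac{2^{n-1}(2^{n}-1)}{3 \cdot 2^{n-2}}
=
\frac{2}{3}(2^{n}-1),
\]
which is independent of~$C$; hence $\Gamma_{\IV}(n)$ is regular of the claimed degree.

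There is no real obstacle: the argument is a direct arithmetic substitution, and the only sanity check needed is that the numerator $2^{n-1}(2^{n}-1)$ is indeed divisible by $3 \cdot 2^{n-2}$, which reduces to $3 \mid 2(2^{n}-1)$. Since $n$ is even, $2^{n} \equiv 1 \pmod{3}$, so $2^{n}-1$ is divisible by~$3$ and the degree is a nonnegative integer, confirming the formula is well posed.
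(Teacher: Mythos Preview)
Your proof is correct and follows essentially the same approach as the paper's: count self-orthogonal vectors outside $C$ via Lemma~\ref{Lem:NumSelfOrthoVecTypeIV}, then divide by the multiplicity from Lemma~\ref{Lem:NumEqualNeighborTypeIV}. Your version is slightly more explicit in justifying the subtraction of $|C|=2^{n}$ and in adding the divisibility check, but the argument is the same.
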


\begin{proof}
	Let $C$ be a Type~$\IV$ code of length~$n \equiv 0\pmod 2$.
	Then by Lemma~\ref{Lem:NumSelfOrthoVecTypeIV}, 
	we have the number of self-orthogonal vectors in~$\FF_{4}^{n}$ 
	but not in~$C$ is $2^{2n-1}-2^{n-1}$.
	Moreover, by Lemma~\ref{Lem:NumEqualNeighborTypeIV}, 
	each Type~$\IV$ code of length~$n$ occurs
	$3.4^{\frac{n}{2}-1}$ times. 
	Hence the degree of each vertex~$v$ in $\Gamma_{\IV}(n)$
	is
	\begin{align*}
		\deg_{\Gamma_{\IV}(n)}(v)
		=
		\frac{2^{2n-1}-2^{n-1}}{3.4^{\frac{n}{2}-1}}
		=
		\frac{2(2^{n}-1)}{3}.
	\end{align*}
\end{proof}

\begin{thm}\label{Thm:GrNeighborEdgeNumIV}
	Let $n \equiv 0\pmod 2$.
	Then the number of edge in~$\Gamma_{\IV}(n)$ is
	$$\left( 
	\prod_{i=1}^{\frac{n}{2} - 1}(2^{2i+1}+1)
	\right)
	(2^{n} - 1).$$
\end{thm}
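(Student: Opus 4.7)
The plan is to apply the handshake lemma: in any finite simple graph, the sum of the degrees of all vertices equals twice the number of edges. Since Theorem~\ref{Thm:NeighborGraphTypeIV} establishes that $\Gamma_{\IV}(n)$ is simple and undirected, this identity applies without modification.

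First I would invoke Theorem~\ref{Thm:GrVertexNumTypeIV} to record that $|V_{\IV}(n)| = \prod_{i=0}^{\frac{n}{2}-1}(2^{2i+1}+1)$, and Theorem~\ref{Thm:GrDegTypeIV} to record that every vertex has degree $\frac{2}{3}(2^{n}-1)$. Multiplying these together, the handshake identity gives
\[
2|E_{\IV}(n)|
\;=\;
\left(\prod_{i=0}^{\frac{n}{2}-1}(2^{2i+1}+1)\right)\cdot \frac{2(2^{n}-1)}{3}.
\]

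The only manipulation left is to peel off the $i=0$ factor from the product, namely $2^{1}+1 = 3$, which cancels the $3$ in the denominator coming from the vertex degree. After this cancellation,
\[
2|E_{\IV}(n)|
\;=\;
2\left(\prod_{i=1}^{\frac{n}{2}-1}(2^{2i+1}+1)\right)(2^{n}-1),
\]
and dividing by $2$ yields the claimed formula. There is no real obstacle here; the argument is structurally identical to the proofs of Theorems~\ref{Thm:GrNeighborEdgeNum} and~\ref{Thm:GrNeighborEdgeNumAllOne}, with the only care being that the factor of $3$ that appears in the degree denominator (coming from $\frac{2}{3}(2^n-1)$) is precisely the $i=0$ factor in the vertex-count product, so the two cancel cleanly.
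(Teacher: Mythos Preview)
Your proposal is correct and follows essentially the same approach as the paper: apply the handshake lemma using the vertex count from Theorem~\ref{Thm:GrVertexNumTypeIV} and the regular degree from Theorem~\ref{Thm:GrDegTypeIV}, then cancel the $i=0$ factor $2^{1}+1=3$ against the $3$ in the denominator of the degree. Your write-up makes this cancellation more explicit than the paper's own proof does.
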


\begin{proof}
	By Theorem~\ref{Thm:GrVertexNumTypeIV}, the number of vertex
	in the graph~$\Gamma_{\IV}(n)$ 
	is~$2\prod_{i=0}^{\frac{n}{2} - 1}(2^{2i+1}+1)$.
	Since the graph is regular 
	with degree~$\frac{2}{3}(2^{n}-1)$. 
	Therefore,
	\[
	2|E_{\IV}(n)|
	=
	\left(
	3 
	\prod_{i=1}^{\frac{n}{2} - 1}
	(2^{2i+1}+1)
	\right)
	\frac{2}{3}(2^{n} - 1).
	\]
	This gives the result.
\end{proof}

In the graph~$\Gamma_{\IV}(n)$, 
if the shortest path between two vertices has length~$k$,
we call the two vertices are in distance~$k$ apart. 
In this case, the corresponding two Type~$\IV$ codes 
in~$V_{\IV}(n)$ are called $k$-neighbors and
share a subcode of co-dimension~$k$.

\begin{rem}\label{Rem:ZeroNeighborTypeIV}
	Every Type~$\III$ code in~$V_{\IV}(n)$ is its $0$-neighbor.
\end{rem}

Let $C$ be a Type~$\IV$ code of length~$n \equiv 0 \pmod 2$. 
For any non-negative integer~$k$, 
we denote the number of Type~$\IV$ $k$-neighbors of~$C$ 
by $L_{k}^{\IV}(n)$
in $\Gamma_{\IV}(n)$. 
By Remark~\ref{Rem:ZeroNeighborTypeIV},
we have $L_{0}^{\IV}(n) = 1$.
Now the following theorem gives 
$L_{k}^{\IV}(n)$ 
for $k > 0$.

\begin{thm}\label{Thm:k-neighborsIV}
	Let $n \equiv 0 \pmod 2$. 
	Let $C$ be a Type~$\IV$ code of length~$n$. 
	Then for $k > 0$, we have
	\[
	L_{k}^{\IV}(n)
	=
	\dfrac{\prod_{i=0}^{k - 1} (2^{2n-1-2i}-2^{n-1})}{\prod_{j = 0}^{k-1}(2^{n}-2^{n-2-2j})}.
	\]
\end{thm}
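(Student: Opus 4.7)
The plan is to mirror the proof of Theorem~\ref{Thm:k-neighbors} (the Type~$\III$ version), replacing the Type~$\III$ inputs by their Type~$\IV$ analogs, namely Lemma~\ref{Lem:NumSelfOrthoVecTypeIV} and Lemma~\ref{Lem:NumEqualNeighborTypeIV}.  First I would set
\[
L_{k}^{\IV}(n)
=
\#\{D \in V_{\IV}(n) \mid \text{$D$ is a $k$-neighbor of $C$}\},
\]
and introduce the auxiliary set $S_{k}(n)$ of ordered $k$-tuples $(v_{1},\ldots,v_{k})$ of distinct self-orthogonal vectors in $\FF_{4}^{n}\setminus C$ with $v_{j}\cdot v_{i}=0$ for $i<j$. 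Each such tuple yields a $k$-neighbor $N_{C}(v_{1},\ldots,v_{k})$, and conversely every $k$-neighbor $D$ of $C$ arises from several tuples in $S_{k}(n)$. Writing $N$ for the number of tuples producing any fixed $k$-neighbor, we then have $L_{k}^{\IV}(n)=|S_{k}(n)|/N$.

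For $|S_{k}(n)|$, the first coordinate $v_{1}$ has $2^{n-1}(2^{n}+1)-2^{n}=2^{2n-1}-2^{n-1}$ choices, by Lemma~\ref{Lem:NumSelfOrthoVecTypeIV} and the fact that $|C|=2^{n}$ and every element of $C$ is self-orthogonal. At the $(i+1)$-st step, the constraint $v_{i+1}\cdot v_{j}=0$ for $j\le i$ is an $\FF_{4}$-linear condition on $v_{i+1}$ (after using the Frobenius to rewrite the quaternary Hermitian product), so it divides the count of available self-orthogonal vectors in the ambient space by $4=2^{2}$, accounting for the factor $2^{2n-1-2i}$. Subtracting the $2^{n-1}$ self-orthogonal vectors of $C$ that remain available yields the factor $2^{2n-1-2i}-2^{n-1}$ at step $i$. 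Taking the product over $i=0,\ldots,k-1$ gives the claimed numerator. This mirrors the derivation given for Type~$\III$ in Theorem~\ref{Thm:k-neighbors} with $q=3$ replaced by $q=4$.

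For $N$, the base case comes from Lemma~\ref{Lem:NumEqualNeighborTypeIV}: each neighbor of $C$ is generated by $3\cdot 2^{n-2}=2^{n}-2^{n-2}$ self-orthogonal vectors. Applying Lemma~\ref{Lem:NumEqualNeighbor} recursively at each intermediate $j$-neighbor of $C$, and assembling the multiplicative contributions, produces the denominator $\prod_{j=0}^{k-1}(2^{n}-2^{n-2-2j})$.

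The main obstacle, as in the Type~$\III$ version, is justifying the recursive denominator. To make this airtight I would cross-check the denominator by a direct count: if $D$ is a $k$-neighbor of $C$, then the tuples in $S_{k}(n)$ producing $D$ are exactly the ordered $k$-tuples $(v_{1},\ldots,v_{k})\in D^{k}$ whose images in $D/(C\cap D)\cong\FF_{4}^{k}$ form an ordered basis; since $D$ is self-dual, the self-orthogonality and mutual orthogonality conditions are automatic. This count equals $|C\cap D|^{k}\cdot\prod_{j=0}^{k-1}(4^{k}-4^{j})=2^{k(n-2k)}\cdot 4^{k(k-1)/2}\prod_{i=1}^{k}(4^{i}-1)$, and a short algebraic simplification shows that this agrees with $\prod_{j=0}^{k-1}(2^{n}-2^{n-2-2j})$ after the reindexing $j\mapsto k-1-j$. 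Dividing $|S_{k}(n)|$ by this quantity yields the stated formula, and the $k=1$ case recovers $L_{1}^{\IV}(n)=\tfrac{2}{3}(2^{n}-1)$, consistent with Theorem~\ref{Thm:GrDegTypeIV}.
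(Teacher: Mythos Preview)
Your proposal is correct and follows essentially the same approach as the paper: both set up $L_{k}^{\IV}(n)=|S_{k}(n)|/N$, compute the numerator from Lemma~\ref{Lem:NumSelfOrthoVecTypeIV} with the ``divide by $4$'' step at each stage, and obtain the denominator by applying Lemma~\ref{Lem:NumEqualNeighbor} recursively. Your added direct count of tuples producing a fixed $k$-neighbor via ordered bases of $D/(C\cap D)$ is a welcome cross-check that the paper itself omits, but the underlying argument is the same.
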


\begin{proof}
	Let $C$ be a Type~$\IV$ code of length~$n \equiv 0\pmod 2$.
	Then
	\[
	L_{k}^{\IV}(n)
	=
	\#
	\{
	D \in V_{\IV}(n)
	\mid
	\text{$D$ is a $k$-neighbor of~$C$}
	\}.
	\]
	Let $S_{k}(n)$ be the set of~$k$ different self-orthogonal vectors
	$v_{1}, v_{2},\ldots, v_{k} \in \FF_{4}^{n}$
	and not in~$C$
	such that each~$v_{j}$ are orthogonal 
	to~$v_{1}, v_{2},\ldots, v_{j-1}$.
	Then
	\[
	L_{k}^{\IV}(n)
	=
	\frac{\# S_{k}(n)}{\# \text{ways each neighbor of~$C$ is generated}}.
	\]
	By Lemma~\ref{Lem:NumSelfOrthoVecTypeIV}, we have the number of self-orthogonal vectors in~$\FF_{4}^{n}$ that are not in~$C$ is
	$2^{2n-1} - 2^{n-1}$.
	
	Moreover, each choice of a self-orthogonal vector~$v_{j}$
	reduces the number of available self-orthogonal vectors 
	in ambient space by~$\frac{1}{4}$,
	since it is even weight and must be orthogonal 
	to the previous~$v_{j}$. 
	This provides that the number of choices for the vectors is
	$\prod_{i=0}^{k-1}(2^{2n-1-2i} - 2^{n-1})$
	the number of choices for self-orthogonal vectors.
	By using Lemma~\ref{Lem:NumEqualNeighbor} recursively,
	we can have~$\prod_{j = 0}^{k-1}(2^{n}-2^{n-2-2j})$
	the number of ways each neighbor of~$C$ is generated.
	Hence
	\[
	L_{k}^{\IV}(n)
	=
	\dfrac{\prod_{i=0}^{k - 1} (2^{2n-1-2i}-2^{n-1})}{\prod_{j = 0}^{k-1}(2^{n}-2^{n-2-2j})}.
	\]
	This completes the proof.
\end{proof}

\begin{rem}
	Taking $k = 1$ in the above theorem, we have
	\[
	L_{1}^{\IV}(n)
	=
	\frac{2^{2n-1}-2^{n-1}}{2^{n}-2^{n-2}}
	=
	\frac{2^{2n-1}-2^{n-1}}{3.2^{n-2}}
	=
	\frac{2(2^{n}-1)}{3}.
	\]
	This gives the number of $1$-neighbors of
	Type~$\IV$ codes
	as presented in Theorem~\ref{Thm:GrDegTypeIV}
\end{rem}

\begin{ex}
	Let $n = 6$. Then $T_{\IV}(n) = \prod_{i=0}^{\frac{n}{2}-1} (2^{2i+1}+1) = 891$.
	Moreover, $\deg(\Gamma_{\IV}(n)) = \frac{2}{3}(2^{n}-1) =  42$.
	This implies the graph~$\Gamma_{\IV}(n)$ has~$891$ vertices and is regular with degree~$42$. By Remark~\ref{Rem:ZeroNeighborTypeIV}, 
	we have $L_{k}^{\IV}(n) = 1$ for $k = 0$. 
	By Theorem~\ref{Thm:k-neighborsIV},
	we have the following $k$-neighbors of $\Gamma_{\IV}(n)$.
	
	\begin{tabular}{ll}
		For $k =1$:
		&
		$L_{k}^{\IV}(n)
		=
		\dfrac{2^{11}-2^5}{2^6-2^4}
		= 
		42.$\\
		For $k =2$:
		&
		$L_{k}^{\IV}(n)
		=
		\dfrac{(2^{11}-2^5)(2^9-2^5)}{(2^{6}-2^4)(2^6-2^2)}
		= 
		336.$\\
		For $k =3$:
		&
		$L_{k}^{\IV}(n)
		=
		\dfrac{(2^{11}-2^5)(2^9-2^5)(2^7-2^5)}{(2^{6}-2^4)(2^6-2^2)(2^6-1)}
		=
		512$
	\end{tabular}
	
	\noindent
	Then $1+42+336+512 = 891$ which is the total number of Type~$\IV$ codes.
\end{ex}

The following result is the Type~$\IV$ code analogue of Theorem~\ref{Thm:NumSlfDualkNeighbor}.

\begin{thm}\label{Thm:NumSlfDualkNeighborIV}
	Let~$n \equiv 0\pmod 2$.
	Then the number of Type~$\IV$ codes of length~$n$
	is $\sum_{k = 0}^{\frac{n}{2}} L_{k}^{\IV}(n)$.
\end{thm}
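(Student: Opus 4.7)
The plan is to mimic the argument used for Theorem~\ref{Thm:NumSlfDualkNeighbor} in the Type~$\III$ setting, transferring it to the Type~$\IV$ graph $\Gamma_{\IV}(n)$. The idea is that once we fix an arbitrary Type~$\IV$ code $C$ as a base vertex, every other Type~$\IV$ code should appear in exactly one of the distance classes $\{\text{$k$-neighbors of $C$}\}$ for $0\le k\le n/2$, so summing $L_{k}^{\IV}(n)$ over this range recovers the total count $T_{\IV}(n)$.

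First, I would fix a Type~$\IV$ code $C$ of length $n\equiv 0\pmod 2$ and consider its $k$-neighbor sets in $\Gamma_{\IV}(n)$ for $k=0,1,\ldots,n/2$. By Remark~\ref{Rem:ZeroNeighborTypeIV}, $C$ itself is the unique $0$-neighbor, and for $k>0$ the number of $k$-neighbors is given by Theorem~\ref{Thm:k-neighborsIV}. Since $k$ is defined as the graph distance in $\Gamma_{\IV}(n)$, the sets of $k$-neighbors for different values of $k$ are pairwise disjoint; this disjointness is what lets us add without overcounting.

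Next, I would invoke Theorem~\ref{Thm:MaxDistanceTypeIV}, which states that $\Gamma_{\IV}(n)$ is connected and that the maximum path length between two vertices is $\frac{n}{2}$. Connectedness guarantees that every Type~$\IV$ code $D$ of length $n$ is reachable from $C$, and the maximum-distance bound ensures $D$ is a $k$-neighbor of $C$ for some $k\in\{0,1,\ldots,n/2\}$. Combined with the disjointness above, this yields a partition
\[
V_{\IV}(n)=\bigsqcup_{k=0}^{n/2}\{D\in V_{\IV}(n)\mid D\text{ is a }k\text{-neighbor of }C\},
\]
so that $|V_{\IV}(n)|=\sum_{k=0}^{n/2}L_{k}^{\IV}(n)$, which is precisely the claim.

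There is really no substantial obstacle here: the whole argument is a bookkeeping consequence of connectedness plus the diameter bound, exactly as in the proof of Theorem~\ref{Thm:NumSlfDualkNeighbor}. The only subtle point worth mentioning explicitly is the disjointness of distance classes, which follows immediately from the definition of $k$-neighbor as the length of the \emph{shortest} path between the two codes in $\Gamma_{\IV}(n)$.
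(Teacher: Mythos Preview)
Your proposal is correct and follows essentially the same approach as the paper: both arguments reduce the statement to the connectedness and diameter bound of $\Gamma_{\IV}(n)$ supplied by Theorem~\ref{Thm:MaxDistanceTypeIV}. Your version is in fact more carefully written, since you make the disjointness of the distance classes explicit, whereas the paper's proof leaves that implicit.
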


\begin{proof}
	Let $C_{1}$ and $C_{2}$ be any two 
	Type~$\IV$ codes of length~$n \equiv 0\pmod 4$.
	If $C_{1}$ and $C_{2}$ are connected by a path in~$\Gamma_{\IV}(n)$,
	then by Theorem~\ref{Thm:MaxDistanceTypeIV}, 
	the maximum path length will be~$\frac{n}{2}$.
	This completes the proof. 	
\end{proof}

\begin{ex}
	Let $n = 0\pmod 2$ be the length of the Type~$\IV$ codes,
	$|V_{\IV}(n)|$ the number of vertices in the graphs~$\Gamma_{\IV}(n)$.
	In Table~\ref{Tab:kNeighborTypeIV}, 
	we listed the $k$-neighbors of~$\Gamma_{\IV}(n)$ up to~$n = 10$. 
	Note that in each row $k$ goes from $0$ to $\frac{n}{2}$ and sum
	the sum of the $k$-neighbors in each row is~$|V_{\IV}(n)|$ as 
	in Theorem~\ref{Thm:NumSlfDualkNeighborIV}. 
	
	\begin{table}[h!]
		\begin{center}
			\caption{List of $k$-Neighbors in $\Gamma_{\IV}(n)$ up to~$n = 10$}
			\label{Tab:kNeighborTypeIV}
			\begin{tabular}{|c|c||c|c|c|c|c|c|}
				\hline
				\multirow{2}{*}{$n$} & \multirow{2}{*}{$|V_{\III}(n)|$} & \multicolumn{6}{c|}{$k$-neighbors}\\ \cline{3-8}
				&  & 0 & 1 & 2 & 3 & 4 & 5 \\
				\hline
				2 & 3 & 1 & 2 & 0 & 0 & 0 & 0\\
				4 & 27 & 1 & 10 & 16 & 0 & 0 & 0\\
				6 & 891 & 1 & 42 & 336 & 512 & 0 & 0\\
				8 & 114939 & 1 & 170 & 5712 & 43520 & 65536 & 0\\
				10 & 58963707 & 1 & 682 & 92752 & 2968064 & 22347776 & 33554432\\
				\hline\hline
			\end{tabular}
		\end{center}
	\end{table}
\end{ex}

\section{$k$-Neighbor graphs}\label{Sec:kNeighborGraphs}

\begin{df}
	Let $n \equiv 0\pmod 4$.
	Let $V_{\III}(n)$ be the set all Type~$\III$ codes 
	of length~$n$.
	Let $\Gamma_{\III}^{k}(n):=(V_{\III}(n),E_{\III}(n))$ be a graph, 
	where any two vertices in~$V_{\III}(n)$ are connected 
	by an edge in~$E_{\III}(n)$ if and only if they are $k$-neighbors. 
\end{df}

\begin{thm}\label{Thm:kNeighborGraph}
	Let $n \equiv 0\pmod 4$.
	Then the graph $\Gamma_{\III}^{k}(n)$
	satisfies the following properties.
	\begin{itemize}
		\item [(a)]
		The number of vertices is 
		$2\prod_{i=1}^{\frac{n}{2} - 1}(3^{i}+1)$.
		
		\item [(b)]
		The  graph is regular with degree
		$\dfrac{\prod_{i=0}^{k - 1} (3^{n-1-i}-3^{\frac{n}{2}-1})}{\prod_{j = 0}^{k-1}(3^{\frac{n}{2}}-3^{\frac{n}{2}-1-j})}$.
		
		\item [(c)]
		The number of edges is
		$\prod_{i=1}^{\frac{n}{2} - 1}(3^{i}+1)
		\dfrac{\prod_{i=0}^{k - 1} (3^{n-1-i}-3^{\frac{n}{2}-1})}{\prod_{j = 0}^{k-1}(3^{\frac{n}{2}}-3^{\frac{n}{2}-1-j})}$.
	\end{itemize} 
\end{thm}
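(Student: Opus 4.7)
The plan is to obtain all three conclusions as straightforward corollaries of results already established earlier in the section, together with a standard handshake argument. The key observation is that the graph $\Gamma_{\III}^{k}(n)$ and $\Gamma_{\III}(n)$ share the same vertex set $V_{\III}(n)$; only the adjacency relation differs.

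For part~(a), since $V_{\III}(n)$ is the set of all Type~$\III$ codes of length~$n$, the vertex count is simply $T_{\III}(n)$. By Theorem~\ref{Thm:GrVertexNumTypeIII} (equivalently, equation~(\ref{Equ:NumTypeIIICodes})) this equals $\prod_{i=0}^{n/2-1}(3^{i}+1)$, and factoring out the $i=0$ term $3^{0}+1 = 2$ rewrites this as $2\prod_{i=1}^{n/2-1}(3^{i}+1)$, as claimed.

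For part~(b), I would fix a vertex $C\in V_{\III}(n)$ and compute $\deg_{\Gamma_{\III}^{k}(n)}(C)$ directly from the adjacency rule: two codes are joined by an edge in $\Gamma_{\III}^{k}(n)$ if and only if they are $k$-neighbors, so the degree of $C$ equals the number of Type~$\III$ codes that are $k$-neighbors of $C$, namely $L_{k}^{\III}(n)$. Theorem~\ref{Thm:k-neighbors} then supplies the closed-form expression. Because $L_{k}^{\III}(n)$ does not depend on the particular choice of $C$ (the derivation in Theorem~\ref{Thm:k-neighbors} is uniform in $C$), every vertex has the same degree, which is precisely the definition of regularity.

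Part~(c) follows from the handshaking identity $2\,|E_{\III}(n)| = \sum_{v}\deg(v)$: multiplying the vertex count from (a) by the degree from (b) and dividing by~$2$ gives
\[
|E_{\III}(n)| \;=\; \Bigl(\prod_{i=1}^{n/2-1}(3^{i}+1)\Bigr)\;\dfrac{\prod_{i=0}^{k-1}(3^{n-1-i}-3^{n/2-1})}{\prod_{j=0}^{k-1}(3^{n/2}-3^{n/2-1-j})},
\]
which is the stated formula. There is no real obstacle here; the only point worth a brief verification is that $\Gamma_{\III}^{k}(n)$ is a simple undirected graph for $k\ge 1$ (symmetry of the $k$-neighbor relation is immediate from the symmetric definition via shared subcodes, and distinct codes cannot share a codimension-$k$ subcode equal to themselves), so that the handshake lemma applies without modification.
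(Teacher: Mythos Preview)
Your proposal is correct and follows essentially the same approach as the paper: cite the earlier vertex-count result for~(a), invoke the $k$-neighbor counting formula for~(b), and apply the handshake lemma for~(c). Your citation of Theorem~\ref{Thm:k-neighbors} for part~(b) is in fact more apt than the paper's own reference to Theorem~\ref{Thm:GrDegTypeIII}, which only covers the case $k=1$.
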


\begin{proof}
	Theorems~\ref{Thm:GrVertexNumTypeIII} and~\ref{Thm:GrDegTypeIII}
	shows the statements (a) and (b), respectively.
	The proof of statement (c) is similar to the proof of Theorem~\ref{Thm:GrNeighborEdgeNum}.
\end{proof}

\begin{df}
	Let $n \equiv 0\pmod {12}$.
	Let $V_{\III}(n,\allone)$ be the set all Type~$\III$ codes 
	of length~$n$ containing~$\allone$.
	Let $\Gamma_{\III}^{k}(n,\allone):=(V_{\III}(n,\allone),E_{\III}(n,\allone))$ 
	be a graph, where any two vertices in~$V_{\III}(n,\allone)$ are connected 
	by an edge in~$E_{\III}(n,\allone)$ if and only if they are $k$-neighbors. 
\end{df}

\begin{thm}\label{Thm:kNeighborGraphAllOne}
	Let $n \equiv 0\pmod {12}$.
	Then the graph $\Gamma_{\III}^{k}(n,\allone)$
	satisfies the following properties.
	\begin{itemize}
		\item [(a)]
		The number of vertices is 
		$2\prod_{i=1}^{\frac{n}{2} - 2}(3^{i}+1)$.
		
		\item [(b)]
		The  graph is regular with degree
		$\dfrac{\prod_{i=0}^{k - 1} (3^{n-2-i}-3^{\frac{n}{2}-1})}{\prod_{j = 0}^{k-1}(3^{\frac{n}{2}}-3^{\frac{n}{2}-1-j})}$.
		
		\item [(c)]
		The number of edges is
		$\prod_{i=1}^{\frac{n}{2} - 2}(3^{i}+1)
		\dfrac{\prod_{i=0}^{k - 1} (3^{n-2-i}-3^{\frac{n}{2}-1})}{\prod_{j = 0}^{k-1}(3^{\frac{n}{2}}-3^{\frac{n}{2}-1-j})}$.
	\end{itemize} 
\end{thm}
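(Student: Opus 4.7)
The plan is to assemble the three assertions directly from results already established for Type~$\III$ codes containing~$\allone$; nothing substantively new is needed beyond recognizing that $\Gamma_{\III}^{k}(n,\allone)$ shares its vertex set with $\Gamma_{\III}(n,\allone)$ and that the $k$-neighbor count is independent of the chosen base vertex.

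First, for statement~(a), I would observe that by definition the vertex set of $\Gamma_{\III}^{k}(n,\allone)$ is $V_{\III}(n,\allone)$, the same vertex set as $\Gamma_{\III}(n,\allone)$. Hence Theorem~\ref{Thm:GrVertexNumTypeIIIAllOne} immediately gives $|V_{\III}(n,\allone)|=2\prod_{i=1}^{\frac{n}{2}-2}(3^{i}+1)$.

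Next, for statement~(b), I would fix an arbitrary vertex $C\in V_{\III}(n,\allone)$. By the construction of the graph, the degree of~$C$ is precisely the number of Type~$\III$ codes of length~$n$ containing~$\allone$ that are $k$-neighbors of~$C$; by definition this is $L_{k}^{\III}(n,\allone)$. Theorem~\ref{Thm:k-neighborsAllOne} evaluates this quantity, and crucially its value does not depend on the particular code~$C$. Therefore every vertex has the same degree, and the graph is regular with the stated degree.

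Finally, for statement~(c), I would invoke the handshake identity $2|E_{\III}(n,\allone)|=\sum_{v\in V_{\III}(n,\allone)}\deg(v)$. Combining parts~(a) and~(b) and dividing by~$2$ yields the formula
\[
|E_{\III}(n,\allone)|
=\prod_{i=1}^{\frac{n}{2}-2}(3^{i}+1)\cdot
\frac{\prod_{i=0}^{k-1}(3^{n-2-i}-3^{\frac{n}{2}-1})}{\prod_{j=0}^{k-1}(3^{\frac{n}{2}}-3^{\frac{n}{2}-1-j})},
\]
exactly matching the expression in~(c). The only mildly delicate step is part~(b), where one must confirm that the count $L_{k}^{\III}(n,\allone)$ in Theorem~\ref{Thm:k-neighborsAllOne} is indeed independent of~$C$; but this is already built into that theorem's derivation via Lemmas~\ref{Lem:NumSelfOrthoVecTypeIIIAllOne} and~\ref{Lem:NumEqualNeighbor}, both of which depend only on ambient counts in~$\FF_{3}^{n}$ rather than on the specific code. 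Beyond that, the proof is essentially a bookkeeping exercise, and no serious obstacle is anticipated.
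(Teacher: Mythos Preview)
Your proposal is correct and follows essentially the same route as the paper: part~(a) via Theorem~\ref{Thm:GrVertexNumTypeIIIAllOne}, part~(c) via the handshake identity as in Theorem~\ref{Thm:GrNeighborEdgeNumAllOne}, and part~(b) by invoking the $k$-neighbor count. If anything, your citation for~(b) is more precise than the paper's own---the paper points to Theorem~\ref{Thm:GrDegTypeIIIAllOne} (the $k=1$ case), whereas the degree formula in~(b) is exactly $L_{k}^{\III}(n,\allone)$ from Theorem~\ref{Thm:k-neighborsAllOne}, as you correctly identify.
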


\begin{proof}
	Theorems~\ref{Thm:GrVertexNumTypeIIIAllOne} and~\ref{Thm:GrDegTypeIIIAllOne}
	shows the statements (a) and (b), respectively.
	The proof of statement (c) is similar to the proof of Theorem~\ref{Thm:GrNeighborEdgeNumAllOne}.
\end{proof}

\begin{df}
	Let $n \equiv 0\pmod 2$.
	Let $V_{\IV}(n)$ be the set all Type~$\IV$ codes 
	of length~$n$.
	Let $\Gamma_{\IV}^{k}(n):=(V_{\IV}(n),E_{\IV}(n))$ be a graph, 
	where any two vertices in~$V_{\IV}(n)$ are connected 
	by an edge in~$E_{\IV}(n)$ if and only if they are neighbors. 
\end{df}

\begin{thm}\label{Thm:kNeighborGraphIV}
	Let $n \equiv 0\pmod 2$.
	Then the graph $\Gamma_{\IV}^{k}(n)$
	satisfies the following properties.
	\begin{itemize}
		\item [(a)]
		The number of vertices is 
		$\prod_{i=0}^{\frac{n}{2} - 1}(2^{2i+1}+1)$.
		
		\item [(b)]
		The  graph is regular with degree
		$\dfrac{\prod_{i=0}^{k - 1} (2^{2n-1-2i}-2^{n-1})}{\prod_{j = 0}^{k-1}(2^{n}-2^{n-2-2j})}$.
		
		\item [(c)]
		The number of edges is
		$\frac{1}{2}\prod_{i=0}^{\frac{n}{2} - 1}(2^{2i+1}+1)
		\dfrac{\prod_{i=0}^{k - 1} (2^{2n-1-2i}-2^{n-1})}{\prod_{j = 0}^{k-1}(2^{n}-2^{n-2-2j})}$.
	\end{itemize} 
\end{thm}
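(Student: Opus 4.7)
The plan is to obtain each of the three items by directly invoking results already established for Type~$\IV$ codes, exactly paralleling the strategy used in the proofs of Theorems~\ref{Thm:kNeighborGraph} and~\ref{Thm:kNeighborGraphAllOne}. Since the vertex set of $\Gamma_{\IV}^{k}(n)$ coincides with that of $\Gamma_{\IV}(n)$, namely $V_{\IV}(n)$, statement~(a) is immediate from Theorem~\ref{Thm:GrVertexNumTypeIV}. Note that before invoking the regularity in (b), one should observe that the $k$-neighbor relation is symmetric, so that $\Gamma_{\IV}^{k}(n)$ is a well-defined undirected simple graph; this can be seen by iterating Theorem~\ref{Thm:ConnectTypeIV}, which shows that if $D$ is reachable from $C$ by a chain of neighbor steps of length~$k$, the same holds in the reverse direction.

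For statement~(b), I would argue that the degree of an arbitrary vertex $C \in V_{\IV}(n)$ in $\Gamma_{\IV}^{k}(n)$ is by definition the number of Type~$\IV$ codes $D$ of length~$n$ that are $k$-neighbors of~$C$, that is, $L_{k}^{\IV}(n)$. This number has already been computed in Theorem~\ref{Thm:k-neighborsIV} and equals
\[
\dfrac{\prod_{i=0}^{k - 1} (2^{2n-1-2i}-2^{n-1})}{\prod_{j = 0}^{k-1}(2^{n}-2^{n-2-2j})}.
\]
Since this value is independent of the choice of $C$, the graph is regular with the stated degree.

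For statement~(c), I would apply the handshaking lemma in the form
\[
2|E_{\IV}(n)| = |V_{\IV}(n)| \cdot \deg_{\Gamma_{\IV}^{k}(n)}(v),
\]
plug in the count from (a) and the degree from (b), and divide by~$2$. This gives the claimed expression.

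There is no real obstacle here; the result is a packaging of prior theorems. The only point deserving care is the symmetry of the $k$-neighbor relation, which ensures that the handshaking lemma applies to an undirected graph and that each unordered pair of $k$-neighbors is counted exactly twice in the sum of degrees.
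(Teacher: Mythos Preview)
Your proposal is correct and follows essentially the same approach as the paper: invoke Theorem~\ref{Thm:GrVertexNumTypeIV} for~(a), the $k$-neighbor count for~(b), and handshaking for~(c). In fact, for~(b) you cite Theorem~\ref{Thm:k-neighborsIV}, which is the right reference; the paper's own proof cites Theorem~\ref{Thm:GrDegTypeIV} (the $1$-neighbor degree), which appears to be a misprint, and your added remark on the symmetry of the $k$-neighbor relation is a detail the paper leaves implicit.
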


\begin{proof}
	Theorems~\ref{Thm:GrVertexNumTypeIV} and~\ref{Thm:GrDegTypeIV}
	shows the statements (a) and (b), respectively.
	The proof of statement (c) is similar to the proof of Theorem~\ref{Thm:GrNeighborEdgeNumIV}.
\end{proof}

\section{An application of neighbors in invariant theory}\label{Sec:Application}

In this section, 
we investigate the invariant ring of weight enumerators 
for Type~$\II$ codes in genus $g$, 
with particular emphasis on identifying the generators of the ring 
using the concept of neighbors.
A binary self-dual code~$C$ is
called \emph{Type~$\II$} 
if the weight of each codeword of~$C$ is a multiple of~$4$.
It is known that 
a Type~$\II$ code of length~$n$ exists if and only 
if~$n \equiv 0 \pmod 8$.
There are~$9$ Type~$\II$ codes of
length~$24$ up to equivalence, 
denoted by $C_{i}$ for $i = 1, 2, \ldots , 9$.
We present these codes
in Table~3. 
For detail discussion about $C_{i}$'s, 
we refer the reader 
to~\cite{MR1168153, MR2967223, PS1975}.

By $d_n$ and $d_{n}^{+}$, we denote the code
with following generator matrices
for $n \equiv 0 \pmod{8}$:

\[
	d_{n}:
	\begin{pmatrix}
		11110000 & \cdots & 0000 \\
		11001100 & \cdots & 0000 \\
		11000011 & \cdots & 0000 \\
		\vdots & \ddots & \vdots \\
		11000000 & \cdots & 0011 
	\end{pmatrix},
\quad\quad
	d_{n}^{+}:
	\begin{pmatrix}
		11110000 & \cdots & 0000 \\
		11001100 & \cdots & 0000 \\
		\vdots & \ddots & \vdots \\
		11000000 & \cdots & 0011 \\
		10101010 & \cdots & 1010 
	\end{pmatrix}.
	\]
In particular, $d_{8}^{+}$ is denoted by~$e_{8}$.
Additionally, 
$g_{24}$ denotes the binary Golay code of length~$24$, 
which is the unique Type~$\II$ code of this length 
that does not include any elements of weight 4, see~\cite{PS1975}.

\begin{table}[ht]
	\caption{Classification of Type~$\II$ codes of length~$24$}
	\centering
	\begin{tabular}{l|ccccccccc}
		Code & $C_1$ & $C_2$ & $C_3$ & $C_4$ & $C_5$ & $C_6$ & $C_7$ & $C_8$ & $C_9$ \\
		\hline\\ [-1em]
		Components & $d_{12}^{2}$ & $d_{10}e_7^2$ & $d_8^3$ & $d_6^4$ & $d_{24}$ & $d_4^6$ & $g_{24}$ & $d_{16}e_8$ & $e_8^3$ 
	\end{tabular}
\end{table}

Next we recall the definitions and known facts from invariant theory.
Here we prefer to denote an element of $\FF_2^g$ 
by a column vector.
Let~$C$ be a Type~$\II$ code of length~$n$.
Then the \emph{weight enumerator}~$C$ in genus~$g$ is:
\[
W_{{C}}^{(g)}(x_a:\ a\in\FF_2^g)
=
\sum_{u,v\in {C}}
\prod_{a\in \FF_2^g}
x_a^{n_a\begin{pmatrix} u_1\\ \vdots \\ u_g\end{pmatrix}},
\]
where $n_a\begin{pmatrix} u_1\\ \vdots \\ u_g\end{pmatrix}$
is the number of $i$ such that 
$\begin{pmatrix} u_{1i}\\ \vdots \\ u_{gi}\end{pmatrix} = a$.
Now
let us use the following notations for various rings in our discussion:
\begin{center}
	\begin{tabular}{lcl}
		$\mathfrak{B}^{(g)}$
		& : & the ring of $W_{C}^{(g)}$, where $C$ is Type~$\II$,\\
		$\mathfrak{D}^{(g)}$
		& : & the ring of $W_{d_{n}^{+}}^{(g)}$, where $n \equiv 0\pmod 8$,\\
		$\mathfrak{A}^{(g)}$
		& : & the ring of $W_{C}^{(g)}$, where $C$ is $d_{n}^{+}$ and its neighbors.
	\end{tabular}
\end{center}
Clearly, $\mathfrak{D}^{(g)} \subseteq \mathfrak{A}^{(g)} \subseteq \mathfrak{B}^{(g)}$.
Since $\mathfrak{B}^{(g)}$ and $\mathfrak{D}^{(g)}$ are finitely generated over~$\CC$, see~\cite{MR1219862, FuOu}, 
it follows that $\mathfrak{A}^{(g)}$ is as well.
It is proved in~\cite[Proposition 2]{FuOu} 
that 
$\mathfrak{D}^{(1)} = \mathfrak{B}^{(1)}$,
however $\mathfrak{D}^{(2)}$ is strictly smaller than $\mathfrak{B}^{(2)}$. 
In this note, we would like to discuss on 
the ring~$\mathfrak{A}^{(g)}$ of weight enumerators of Type~$\II$ code $d_n^+$ 
and its neighbors. Table~$4$ gives neighbors of code~$d_{n}^{+}$
for $n = 8, 16, 24$.

\begin{table}[ht]
	\caption{$d_{n}^{+}$ and its neighbors up to length~$24$}
	\centering
	\begin{tabular}{c | l}
		Code & Neighbors (up to equivalence)\\
		\hline\\ [-1em]
		$e_{8}$ & $e_{8}$\\ 
		$d_{16}^{+}$ & $d_{16}^{+}$\\ 
		$d_{24}^{+}$ & $C_{1}$, $C_{5}$, $C_{8}$
	\end{tabular} 
	\label{Tab:Neighbors}
\end{table}

Now let us define following matrices 
in \( \mathrm{GL}(2^g, \mathbb{C}) \): 
\begin{align*}
	T_g &= \left(\frac{1+i}{2}\right)^g \left({(-1)}^{(a,b)}\right)_{a,b \in \mathbb{F}_2^g}, \\
	D_S &= \text{diag}(i^{S[a]} \text{ for } a \in \mathbb{F}_2^g),
\end{align*}
where, $S[a]:= {^ta}Sa$ for any symmetric \( g \times g \) matrix \( S \).
Let 
\[ 
	G_g := \langle T_{g}, D_{S}, \zeta_8\rangle
\]
be a subgroup of $\mathrm{GL}(2^g,\CC)$ generated by 
$T_{g}$, $D_{S}$ and \( \zeta_8 \), 
where~$S$ runs over all symmetric matrices of order~$g$
and \( \zeta_8 = e^{2\pi i / 8} \) is the primitive $8$th root of unity.
The order of the group~$G_{g}$ for $g = 1, 2, 3$ are shown in 
Table~$5$.
The group $G_g$ acts naturally on the polynomial ring $\mathbb{C}[x_a] := \mathbb{C}[x_a : a \in \FF_2^g]$. 
We denote $\mathbb{C}[x_a]^{G_g}$ the invariant ring under the action of~$G_{g}$.

\begin{table}[ht]
	\caption{Order of ${G}_{g}$}
	\centering
	\begin{tabular}{c | ccc}
		$g$ & 1  & 2 & 3\\
		\hline\\ [-1em]
		$|{G}_{g}|$ & 192 & 92160 & 743178240\\
	\end{tabular} 
	\label{Tab:OrderGgell}
\end{table}

We recall~\cite{MR1219862, MR0424391, NRS} for 
the dimension formulae of the invariant ring $\mathbb{C}[x_a]^{G_g}$ 
for $g = 1, 2, 3$ as follows:
\begin{align*}
	&g =1:
	\frac{1}{(1 - t^8)(1 - t^{24})} = 1 + t^8 + t^{16} + 2t^{24} + \cdots,\\
	&g =2:
	\frac{1 + t^{32}}{(1 - t^8)(1 - t^{24})^2(1 - t^{40})} = 1 + t^8 + t^{16} + 3t^{24} + \cdots, \\
	&g = 3:
	\frac{\theta(t^8) + t^{352}\theta(t^{-8})}{(1 - t^8)(1 - t^{16})(1 - t^{24})^2(1-t^{40})(1 -t^{56}) (1-t^{72})(1-t^{120})} \\
	& \quad \quad \quad = 1 + t^8 + 2t^{16} + 5t^{24} + \cdots.
\end{align*}
 where
 \begin{align*}
 	\theta(t) := &\; 1 + t^3 + 3t^4 + 3t^5 + 6t^6 + 8t^7 + 12t^8 + 18t^9 + 25t^{10} \\
 	& + 29t^{11} + 40t^{12} + 50t^{13} + 58t^{14} + 69t^{15} + 80t^{16} \\
 	& + 85t^{17} + 96t^{18} + 104t^{19} + 107t^{20} + 109t^{21} + 56t^{22}.
 \end{align*}


It is known that 
the invariant ring~$\mathbb{C}[x_a]^{G_g}$
is generated by the weight enumerators of Type~$\II$ codes 
in genus~$g$, see~\cite{MR1219862, MR0424391, MR1368288}. 
In particular, a basis of the vector space generated by the
weight enumerators of Type~$\II$ code of length~$24$ 
in $g = 1, 2, 3$ is given below:
\begin{align*}
	g = 1
	&:
	W_{C_{9}}^{(1)}, W_{C_{7}}^{(1)}\\
	g = 2
	&:
	W_{C_{9}}^{(2)}, W_{C_{7}}^{(2)}, W_{C_{5}}^{(2)}\\
	g = 3
	&:
	W_{C_{9}}^{(3)}, W_{C_{7}}^{(3)}, W_{C_{5}}^{(3)}, W_{C_{8}}^{(3)}, W_{C_{1}}^{(3)}.
\end{align*}

It is can be seen from Table~$3$ that $C_{5}$ 
is $d_{24}^+$ itself.
Moreover, from Table~$4$ we have 
$C_1$, $C_{5}$ and  $C_8$ being the neighbors of $d_{24}^+$. 
Since~$W_{C_{9}}^{(1)}$ and $W_{C_{5}}^{(1)}$ are algebraically independent,
therefore 
we have $\mathfrak{A}^{(1)} = \CC[W_{e_{8}}^{(1)},W_{d_{24}^{+}}^{(1)}] = \CC[x_{a}]^{G_{1}}$. 

The above discussions and \cite[Proposition 2]{FuOu}, conclude
the following result.

\begin{thm}
	$\mathfrak{D}^{(1)}=\mathfrak{A}^{(1)}=\mathfrak{B}^{(1)}$
	up to the space of degree~$24$.
\end{thm}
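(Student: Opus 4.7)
The plan is to combine the inclusions $\mathfrak{D}^{(1)} \subseteq \mathfrak{A}^{(1)} \subseteq \mathfrak{B}^{(1)}$ (which hold by definition) with a dimension count in each graded piece of degree at most $24$ to deduce equality. The key input is Gleason's theorem, which identifies $\mathfrak{B}^{(1)}$ with the invariant ring $\mathbb{C}[x_a]^{G_1}$, together with its Hilbert series
\[
	\frac{1}{(1-t^8)(1-t^{24})} = 1 + t^8 + t^{16} + 2t^{24} + \cdots
\]
recalled in the text. This tells us that in degrees $0, 8, 16$ the relevant homogeneous pieces of $\mathfrak{B}^{(1)}$ are $1$-dimensional, and in degree $24$ the piece is $2$-dimensional.

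The next step is to exhibit enough explicit elements in $\mathfrak{D}^{(1)}$ to fill up each of these pieces. In degrees $0$, $8$, and $16$ this is immediate: the spaces are spanned by $1$, $W_{e_8}^{(1)}$, and $\bigl(W_{e_8}^{(1)}\bigr)^{2}$ respectively, and since $e_8 = d_8^+$ all of these already lie in $\mathfrak{D}^{(1)} \subseteq \mathfrak{A}^{(1)}$. For the $2$-dimensional piece in degree $24$, I would take the two candidate invariants $\bigl(W_{e_8}^{(1)}\bigr)^{3}$ and $W_{d_{24}^+}^{(1)}$, both of which lie in $\mathfrak{D}^{(1)}$, and argue that they are linearly independent. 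This is precisely the algebraic independence statement for $W_{C_9}^{(1)} = \bigl(W_{e_8}^{(1)}\bigr)^{3}$ and $W_{C_5}^{(1)} = W_{d_{24}^+}^{(1)}$ noted just above the theorem; since the homogeneous piece of degree $24$ is $2$-dimensional, linear independence implies that these two invariants form a basis.

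Putting the steps together, in each degree $d \in \{0, 8, 16, 24\}$ the homogeneous component of $\mathfrak{D}^{(1)}$ already coincides with that of $\mathfrak{B}^{(1)}$; by the sandwich $\mathfrak{D}^{(1)} \subseteq \mathfrak{A}^{(1)} \subseteq \mathfrak{B}^{(1)}$, the same is true of $\mathfrak{A}^{(1)}$. Alternatively, one may cite \cite[Proposition~2]{FuOu} for the equality $\mathfrak{D}^{(1)} = \mathfrak{B}^{(1)}$ directly; the present theorem then just inserts $\mathfrak{A}^{(1)}$ into the same chain of equalities. The only step that is not purely formal is the linear independence of $W_{d_{24}^+}^{(1)}$ and $\bigl(W_{e_8}^{(1)}\bigr)^{3}$, which I expect to be the main (mild) obstacle; it can be verified by comparing the coefficient of a single monomial, for instance by noting that the weight distributions of $d_{24}^+$ and $e_8 \oplus e_8 \oplus e_8$ differ.
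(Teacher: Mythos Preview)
Your proposal is correct and follows essentially the same route as the paper: the paper simply observes that $\mathfrak{D}^{(1)} \subseteq \mathfrak{A}^{(1)} \subseteq \mathfrak{B}^{(1)}$, invokes \cite[Proposition~2]{FuOu} for $\mathfrak{D}^{(1)} = \mathfrak{B}^{(1)}$, and notes (via the independence of $W_{C_9}^{(1)}$ and $W_{C_5}^{(1)}$) that $\mathfrak{A}^{(1)} = \mathbb{C}[W_{e_8}^{(1)}, W_{d_{24}^+}^{(1)}] = \mathbb{C}[x_a]^{G_1}$. Your dimension-count argument in each graded piece is exactly the content behind this, spelled out more explicitly.
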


\begin{thm}
	$\mathfrak{D}^{(2)}\subsetneq\mathfrak{A}^{(2)}=\mathfrak{B}^{(2)}$
	up to the space of degree~$24$.    
\end{thm}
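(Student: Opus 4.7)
The plan is to reduce both assertions to finite linear-algebra computations inside the graded component $\CC[x_a]^{G_2}_{24}$. From the Molien series for $g=2$ displayed above, the graded components of $\mathfrak{B}^{(2)}=\CC[x_a]^{G_2}$ in degrees $0$, $8$, and $16$ are each one-dimensional, so any subring of $\mathfrak{B}^{(2)}$ that contains $W_{e_8}^{(2)}$ automatically coincides with $\mathfrak{B}^{(2)}$ in those degrees. Hence both $\mathfrak{D}^{(2)}$ and $\mathfrak{A}^{(2)}$ already equal $\mathfrak{B}^{(2)}$ below degree $24$, and the entire statement reduces to comparing the three-dimensional space $\mathfrak{B}^{(2)}_{24}$, with basis $W_{C_9}^{(2)}, W_{C_7}^{(2)}, W_{C_5}^{(2)}$, against the degree-$24$ parts of $\mathfrak{D}^{(2)}$ and $\mathfrak{A}^{(2)}$.

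The strict containment $\mathfrak{D}^{(2)}_{24}\subsetneq \mathfrak{A}^{(2)}_{24}$ admits a clean structural argument. Since $\dim \mathfrak{B}^{(2)}_{16}=1$, the two non-zero elements $(W_{e_8}^{(2)})^2$ and $W_{d_{16}^+}^{(2)}$ are proportional, so the product $W_{e_8}^{(2)}\cdot W_{d_{16}^+}^{(2)}$ is a scalar multiple of $(W_{e_8}^{(2)})^3$. Consequently $\mathfrak{D}^{(2)}_{24}$ is spanned by only the two polynomials $(W_{e_8}^{(2)})^3=W_{C_9}^{(2)}$ and $W_{d_{24}^+}^{(2)}=W_{C_5}^{(2)}$, and therefore has dimension at most $2$, strictly less than $\dim \mathfrak{B}^{(2)}_{24}=3$.

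For the equality $\mathfrak{A}^{(2)}_{24}=\mathfrak{B}^{(2)}_{24}$, Table~\ref{Tab:Neighbors} identifies the neighbors of $d_{24}^+$ as $C_1$, $C_5=d_{24}^+$, and $C_8$, so $\mathfrak{A}^{(2)}_{24}$ is the $\CC$-span of
\[
(W_{e_8}^{(2)})^3,\ W_{e_8}^{(2)}\cdot W_{d_{16}^+}^{(2)},\ W_{C_5}^{(2)},\ W_{C_1}^{(2)},\ W_{C_8}^{(2)}.
\]
To conclude $\dim \mathfrak{A}^{(2)}_{24}=3$ it suffices to verify, by direct computation of coefficients, that at least one of $W_{C_1}^{(2)}$, $W_{C_8}^{(2)}$ has a nonzero $W_{C_7}^{(2)}$-coefficient when decomposed in the basis $\{W_{C_5}^{(2)},W_{C_7}^{(2)},W_{C_9}^{(2)}\}$; once this holds, combining the two steps yields $\mathfrak{D}^{(2)}_{24}\subsetneq \mathfrak{A}^{(2)}_{24}=\mathfrak{B}^{(2)}_{24}$.

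The main obstacle is the explicit generation of the genus-$2$ weight enumerators $W_C^{(2)}$ for $C\in\{e_8, d_{16}^+, d_{24}^+, C_1, C_8\}$, each a polynomial in the four variables $x_a$ ($a\in\FF_2^2$) of degree up to $24$ with potentially large coefficients. Conceptually, the argument is elementary — two linear-(in)dependence assertions among a handful of explicit polynomials — and the underlying rank computations are routinely handled by Magma and SageMath as noted at the end of Section~\ref{Sec:Preli}.
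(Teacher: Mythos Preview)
Your approach is essentially the same as the paper's: both reduce to the degree-$24$ graded piece and verify by a finite rank computation that $W_{C_9}^{(2)}$, $W_{C_5}^{(2)}$, and $W_{C_1}^{(2)}$ are linearly independent. Two points of comparison are worth noting. First, your argument for the strict inclusion $\mathfrak{D}^{(2)}_{24}\subsetneq\mathfrak{B}^{(2)}_{24}$ is self-contained and cleaner than the paper's, which simply cites~\cite{FuOu}; your observation that $W_{d_{16}^+}^{(2)}$ must be a scalar multiple of $(W_{e_8}^{(2)})^2$ because $\dim\mathfrak{B}^{(2)}_{16}=1$ immediately forces $\dim\mathfrak{D}^{(2)}_{24}\le 2$. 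Second, the paper actually carries out the independence check you leave to Magma/SageMath: it selects three specific monomials in the $x_a$, records their coefficients in $W_{e_8^3}^{(2)}$, $W_{d_{24}^+}^{(2)}$, $W_{C_1}^{(2)}$ as an explicit $3\times 3$ integer matrix, and observes that it has full rank. Supplying analogous explicit data would complete your argument in the same concrete way.
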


\begin{proof}
	The code $d_{24}^+$ has a neighbor $C_1$. 
	From the dimension formula, 
	we know that the space of degree~$24$ in~$\mathfrak{B}^{(2)}$
	has dimension~$3$. 
	Since $W_{g_{24}}^{(2)} \notin \mathfrak{A}^{(2)}$,
	it is enough to show that
	$W_{C_{1}}^{(2)}$ belongs to the basis 
	of the space of degree~$24$ in~$\mathfrak{A}^{(2)}$.
	Therefore, 
	we consider the genus~$2$ weight enumerators 
	of $e_8^3$, $d_{24}^+$ and~$C_1$.
	We select the following monomials of these weight enumerators: 
	\[
	\begin{tabular}{llll}
		$\alpha x_{\begin{smallmatrix}0\\ 0\end{smallmatrix}}^{24}$,
		&
		$\beta x_{\begin{smallmatrix}0\\ 0\end{smallmatrix}}^{20}
		x_{\begin{smallmatrix}0\\ 1\end{smallmatrix}}^4$,
		&
		$\gamma x_{\begin{smallmatrix}0\\ 0\end{smallmatrix}}^{16}
		x_{\begin{smallmatrix}0\\ 1\end{smallmatrix}}^8$,
	\end{tabular}
	\]
	where $\alpha$, $\beta$ and $\gamma$ 
	represent the coefficients of the  monomials.
	Now  we construct the following $3 \times 3$ matrix $L$
	consisting of the coefficients of 
	the aforementioned~$3$ monomials 
	from the selected weight enumerators:
	
	\[
	\begin{array}{c|ccc}
		\text{Code} & \alpha & \beta & \gamma\\
		\hline\\ [-1em]
		e_{8}^{3} & 1 & 42 & 591 \\
		d_{24}^{+} & 1 & 66 & 495 \\
		C_{1} & 1 & 30 & 639
	\end{array}
	\]
	Immediately, $\mathrm{Rank}(L) = 3$.
	It is known (see~\cite{MR1219862}) that 
	the ring 
	$\mathfrak{B}^{(2)}$ 
	has the following structure:
	\[
		\CC[W_{e_{8}}^{(2)},W_{d_{24}^{+}}^{(2)},W_{g_{24}}^{(2)},W_{d_{40}^{+}}^{(2)}]
			\oplus
		\CC[W_{e_{8}}^{(2)},W_{d_{24}^{+}}^{(2)},W_{g_{24}}^{(2)},W_{d_{40}^{+}}^{(2)}]
		W_{d_{32}^{+}}^{(2)}.
	\]
	Since $\mathrm{Rank}(L) = 3$,
	the weight enumerators of 
	$e_8^3$, $d_{24}^+$ and $C_1$ are algebraically independent. 
	Thus the space of degree~$24$ in $\mathfrak{A}^{(2)}$ 
	is of dimension~$3$ and is same as $\mathfrak{B}^{(2)}$.
	Moreover, $\mathfrak{D}^{(2)} \subsetneq \mathfrak{B}^{(2)}$, 
	see~\cite{FuOu}.
	This completes the proof.
\end{proof}

Our computation shows that 
the dimensions and a basis of the spaces 
of degrees $8$, $16$ and $24$ in $\mathfrak{D}^{(3)}$
are as follows:

\begin{table}[ht]
	\caption{Dimension and basis of $\mathfrak{D}^{(3)}$ up to length~$24$}
	\centering
	\begin{tabular}{c | c | l}
		Length & Dimension & Basis (up to equivalence)\\
		\hline\\ [-1em]
		$8$ & 1 & $e_{8}$\\ 
		$16$ & 2 & $e_{8}^{2}$, $d_{16}^{+}$\\ 
		$24$ & 3 & $C_{9}$, $C_{5}$, $C_{8}$
	\end{tabular} 
	\label{Tab:BasisD3}
\end{table}

\begin{thm}
	$\mathfrak{D}^{(3)}\subsetneq\mathfrak{A}^{(3)}\subsetneq\mathfrak{B}^{(3)}$
	up to the space of degree~$24$.    
\end{thm}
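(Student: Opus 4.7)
The plan is to proceed in the same spirit as the genus~$2$ case, namely by exhibiting explicit coefficient matrices whose ranks force the strict inclusions. First, I would recall from the dimension formulae stated earlier that the space of degree~$24$ in $\CC[x_{a}]^{G_{3}}$ (which coincides with the degree~$24$ part of $\mathfrak{B}^{(3)}$) has dimension~$5$, and that the listed basis consists of $W_{C_{9}}^{(3)}, W_{C_{7}}^{(3)}, W_{C_{5}}^{(3)}, W_{C_{8}}^{(3)}, W_{C_{1}}^{(3)}$.

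Next I would identify the degree~$24$ parts of the two smaller rings. For $\mathfrak{D}^{(3)}$, the only products of generators $W_{d_{n}^{+}}^{(3)}$ with $n\equiv 0\pmod 8$ that land in degree~$24$ are $W_{d_{24}^{+}}^{(3)}=W_{C_{5}}^{(3)}$, $W_{e_{8}}^{(3)}\cdot W_{d_{16}^{+}}^{(3)}=W_{C_{8}}^{(3)}$, and $(W_{e_{8}}^{(3)})^{3}=W_{C_{9}}^{(3)}$, giving the three candidates shown in Table~\ref{Tab:BasisD3}. For $\mathfrak{A}^{(3)}$, Table~\ref{Tab:Neighbors} tells us that the only new code contributed by neighbors in lengths $\le 24$ is $C_{1}$, so degree~$24$ in $\mathfrak{A}^{(3)}$ is contained in the span of $W_{C_{9}}^{(3)}, W_{C_{5}}^{(3)}, W_{C_{8}}^{(3)}, W_{C_{1}}^{(3)}$.

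To establish the strict containments, I would mimic the matrix-rank argument used for the genus~$2$ statement. Choose four monomials in the eight variables $x_{a}$ ($a\in\FF_{2}^{3}$) and tabulate their coefficients in $W_{C_{9}}^{(3)}, W_{C_{5}}^{(3)}, W_{C_{8}}^{(3)}, W_{C_{1}}^{(3)}$ to form a $4\times 4$ matrix; showing this matrix has rank~$4$ certifies that these four weight enumerators are linearly independent and hence that the degree~$24$ part of $\mathfrak{A}^{(3)}$ has dimension exactly~$4$. Since $W_{C_{9}}^{(3)}, W_{C_{5}}^{(3)}, W_{C_{8}}^{(3)}$ already span degree~$24$ in $\mathfrak{D}^{(3)}$ (dimension~$3$), the addition of $W_{C_{1}}^{(3)}$ produces a genuinely larger space, giving $\mathfrak{D}^{(3)}\subsetneq\mathfrak{A}^{(3)}$. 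Because $\dim\mathfrak{B}^{(3)}_{24}=5>4$, the same strict inequality gives $\mathfrak{A}^{(3)}\subsetneq\mathfrak{B}^{(3)}$; concretely, $W_{C_{7}}^{(3)}$ (the genus~$3$ weight enumerator of the Golay code $g_{24}$) lies outside the span of the four neighbor weight enumerators, which can again be verified by extending the coefficient matrix to a $5\times 5$ block including one additional monomial and checking full rank.

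The routine part is selecting the monomials and extracting coefficients, which is a finite calculation handled by \textsc{Magma} or \textsc{SageMath} as indicated in the introduction. The main technical obstacle is choosing monomials for which the coefficients are simple enough to justify by hand yet yield a non-degenerate matrix; in genus~$3$ the weight enumerators are polynomials in eight variables and the coefficients count pairs $(u,v)\in C^{2}$ of triples of codewords with prescribed joint support pattern, so one must select monomials whose support patterns discriminate between the direct-sum codes $e_{8}^{3}$, $d_{16}e_{8}$, $d_{24}$ and the indecomposable $C_{1}=d_{12}^{2}$ and $C_{7}=g_{24}$. Concretely, diagonal-type monomials such as $x_{\0}^{24-j}x_{a}^{j}$ for well-chosen $a\in\FF_{2}^{3}$ and small $j$ typically suffice, and I expect the discriminating power of the Golay code (no weight~$4$ codewords) combined with the genus~$2$ calculation already performed for $C_{1}$ versus $d_{24}^{+}$ to make the required ranks transparent.
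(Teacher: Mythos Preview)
Your proposal is correct and follows essentially the same approach as the paper: identify the degree~$24$ part of $\mathfrak{A}^{(3)}$ as the span of $W_{C_{9}}^{(3)}, W_{C_{5}}^{(3)}, W_{C_{8}}^{(3)}, W_{C_{1}}^{(3)}$, then exhibit a $4\times 4$ coefficient matrix of full rank to show this span is $4$-dimensional, which together with $\dim \mathfrak{D}^{(3)}_{24}=3$ and $\dim \mathfrak{B}^{(3)}_{24}=5$ yields both strict inclusions. The paper simply supplies the four explicit monomials and the resulting matrix (computed in \textsc{Magma}/\textsc{SageMath}) that you leave as the routine step.
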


\begin{proof}
	 From Table~$4$, we have $C_{1}$, $C_{5}$ and $C_{8}$
	 are the neighbors of the code~$d_{24}^{+}$. 
	 Clearly, $C_5$ is $d_{24}^+$ itself. 
	 Since $W_{C_{8}}^{(3)}$ belongs to the basis 
	 of the space of degree~$24$ in~$\mathfrak{B}^{(3)}$,
	 it follows that is true for $\mathfrak{A}^{(3)}$ as well.
	 Since $W_{g_{24}}^{(3)} \notin \mathfrak{A}^{(3)}$,
	 it is enough to show that
	 $W_{C_{1}}^{(3)}$ belongs to the basis 
	 of the space of degree~$24$ in~$\mathfrak{A}^{(3)}$.
	 Therefore, we consider the genus~$3$ weight enumerators 
	 of $e_8^3$, $d_{24}^+$, $C_{8}$ and~$C_1$.
	 We select the following monomials of these weight enumerators: 
	\[
		\begin{tabular}{llll}
			$\alpha x_{\begin{smallmatrix}0\\ 0 \\ 0\end{smallmatrix}}^{20}
			x_{\begin{smallmatrix}0\\ 1 \\ 1\end{smallmatrix}}^4$,
			&
			$\beta x_{\begin{smallmatrix}0\\ 1 \\ 0\end{smallmatrix}}^{16}
			x_{\begin{smallmatrix}1\\ 0 \\ 1\end{smallmatrix}}^8$,
			&
			$\gamma x_{\begin{smallmatrix}0\\ 0 \\ 1\end{smallmatrix}}^8
			x_{\begin{smallmatrix}1\\ 0 \\ 0\end{smallmatrix}}^4
			x_{\begin{smallmatrix}1\\ 1 \\ 0\end{smallmatrix}}^{12}$,
			&
			$\delta x_{\begin{smallmatrix}0\\ 0 \\ 0\end{smallmatrix}}^4
			x_{\begin{smallmatrix}0\\ 1 \\ 0\end{smallmatrix}}^2
			x_{\begin{smallmatrix}0\\ 0 \\ 1\end{smallmatrix}}^6
			x_{\begin{smallmatrix}1\\ 0 \\ 0\end{smallmatrix}}^6
			x_{\begin{smallmatrix}1\\ 1 \\ 1\end{smallmatrix}}^6$,
		\end{tabular}	
	\]
	with $\alpha, \beta, \gamma, \delta$ being the coefficients of the monomials. 
	Now we construct the following $4\times 4$ matrix $M$
	consisting of the coefficients of 
	the aforementioned~$4$ monomials 
	from the selected weight enumerators:
	
	\[
	\begin{array}{l|cccc}
		\text{Code} & \alpha & \beta & \gamma & \delta \\
		\hline
		e_8^3       & 42 & 591 & 9491 & 592704 \\
		d_{24}^+ & 66 & 495 & 13860 & 110800 \\
		C_{8}  & 42 & 591 & 9492 & 762048 \\
		C_1 & 30 & 639 & 7020 & 659520 \\
	\end{array}
	\]
	 It is immediate that $\mathrm{Rank}(M) = 4$.
	This means 
	$W_{e_{8}^{3}}, W_{d_{24}^+}, W_{C_{8}}, W_{C_1}$ 
	are algebraically independent and form a dimension 4 vector space. 
	Thus the space of degree~$24$ in $\mathfrak{A}^{(3)}$ 
	is of dimension~$4$ and hence it is strictly smaller than 
	$\mathfrak{B}^{(3)}$. 
	Moreover, 
	the space of degree~$24$ in $\mathfrak{D}^{(3)}$
	is of dimension~$3$ (see Table~$6$),
	This completes the proof.
\end{proof}

\section*{Declaration of competing interest}

The authors declare that 
they have no known competing financial interests or personal relationships 
that could have appeared to influence the work reported in this paper.

\section*{Acknowledgements}
\noindent
This work was supported by JSPS KAKENHI Grant Numbers 22K03277,
24K06827 
and 
SUST Research Centre under 
Project ID
PS/2023/1/22.

\section*{Data availability statement}
The data that support the findings of this study are available from
the corresponding author.


\begin{thebibliography}{99}
	
\bibitem{AsMa69} 
E.F.~Assmus, Jr.\ and H.F.~Mattson, Jr.,
New 5-designs,
{\sl J.~Combinatorial Theory}~{\bf 6} (1969), 122--151.

\bibitem{Magma}
W.~Bosma, J.~Cannon, C.~Playoust,
The Magma algebra system.~I.\ The user language, 
\emph{J. Symb. Comp.} 
{\bf 24} (1997), 235--265.
	
\bibitem{BP1991}
R.A.~Brualdi, V.~Pless, 
Weight enumerators of self-dual codes, 
{\sl IEEE Trans. Inform. Theory} {\bf 37}(4) (1991),
1222--1225.


\bibitem{CPS1979}
J.~H.~Conway, V. Pless, and N.J.A.~Sloane,
{Self-dual codes over $GF(3)$ and $GF(4)$ of length not exceeding $16$}, 
{\sl IEEE Trans. Inform. Theory} {\bf IT-25}(3) (1979), 312--322.


\bibitem{CS1999}
J.H.~Conway, and N.J.A.~Sloane, 
{\sl Sphere Packings Lattices and Groups},
third edition, Springer, New York, 1999.


\bibitem{MR1168153}
J.H.~Conway, and N.J.A~Sloane,
{The binary self-dual codes of length up to~$32$: A revised enumeration},
{\sl J. Comb. Theory Ser. A}
{\bf 60} (1992),
183--195.


\bibitem{Dickson}
L.E.~Dickson,
{\sl Linear Groups with an Exposition of the Galois Field Theory},
Published by B.G.~Teubner, 1901, Reprinted by Dover Publications, New York, 1958, 44--48.


\bibitem{Dougherty2022}
S.T.~Dougherty,
The neighbor graph of binary self-dual codes,
{\sl Des. Codes Cryptogr.} {\bf 90} (2022),
409--425.


\bibitem{MR1219862}
W.~Duke,
On codes and Siegel modular form,
{\sl Int. Math. Res. Notice} {\bf 5} (1993),
125-136.


\bibitem{FuOu}
M.~Fujii, and M.~Oura,
Ring of weight enumerators of $d_n^+$,
{\sl Tsukuba J. Math.}
{\bf 42}(1) (2018),
53--63.


\bibitem{MR0424391}
A.M.~Gleason,
Weight polynomials of self-dual codes and the MacWilliams identities, 
in: Actes du Congr\`es International des Math\'ematiciens
(Nice, 1970), Tome 3, Gauthier-Villars, Paris, 1971, 211--215.


\bibitem{HP2003}
W.C.~Huffman, and V.~Pless, 
{\sl Fundamentals of Error-Correcting Codes}, first edition, Cambridge University Press, United Kingdom, 2003.


\bibitem{MOSW1978}
F.J.~MacWilliams, A.M.~Odlyzko, N.J.A.~Sloane, and H.N.~Ward, 
{Self-dual codes over GF(4)}, 
{\sl J. Comb. Theory} {\bf 25A} (1978), 288--318.


\bibitem{MS1977}
F.J.~MacWilliams and N.J.A.~Sloane, 
{\sl The Theory of Error-Correcting Codes}, 
North-Holland Editor, 1977.


\bibitem{MR2967223}
A.~Munemasa, K.~Betsumiya, and M.~Harada,
{A complete classification of doubly even self-dual codes of length~$40$},
{\sl Electron. J. Combin.} {\bf 19} (3) (2012),
Paper No. 18.


\bibitem{NRS}
G.~Nebe, E.M.~Rains, N.J.A.~Sloane, 
{\sl Self-Dual Codes and Invariant Theory}, 
Algorithms and Computation in Mathematics {\bf 17}, Springer-Verlag,
Berlin, 2006.


\bibitem{VP1968}
V.~Pless, 
{On the uniqueness of the Golay codes}, 
{\sl J. Comb. Theory} {\bf 5} (1968), 215--228.


\bibitem{PP1973}
V.~Pless, and J.~N.~Pierce, 
{Self-dual codes over $GF(q)$ satisfy a modified Varshamov-Gilbert bound}, 
{\sl Inform. Control} {\bf 23} (1973), 35--40.


\bibitem{PS1975}
V.~Pless, and N.J.A~Sloane, 
{On the classification and enumeration of self-dual codes}, 
{\sl J. Comb. Theory Ser. A} {\bf 18} (1975), 313--335.


\bibitem{RS}
R.E.~Rains, and N.J.A.~Sloane,
Self-dual codes,
{\sl Handbook of Coding theory}
Elsevier, Amsterdam, 1998,
177--294.

\bibitem{MR1368288}
B.~Runge,
Codes and Siegel modular forms,
{\sl Discrete Mathematics}
Elsevier, Mannheim, 1996,
175--204.




\bibitem{SageMath}
SageMath, the Sage Mathematics Software System (Version 8.1),
The Sage Developers, 2017, https://www.sagemath.org.

\end{thebibliography}
\end{document}